\def\RR{{\mathbb R}}
\def\ZZ{{\mathbb Z}}
\let\temp\phi
\let\phi\varphi
\let\varphi\temp
\def\ones{\mathbbm{1}}
\def\rank{\operatorname {rank}}
\theoremstyle{definition}
\newtheorem{definition}{Definition}[section]
\newtheorem{example}[definition]{Example}
\theoremstyle{plain}
\newtheorem{theorem}[definition]{Theorem}
\newtheorem{lemma}[definition]{Lemma}
\newtheorem{proposition}[definition]{Proposition}
\newtheorem{corollary}[definition]{Corollary}
\newtheorem*{corollary*}{Corollary}
\newtheorem{remark}[definition]{Remark}
\newtheorem{observation}[definition]{Observation}
\begin{document}

\title{Conformally rigid graphs}

\author[]{Stefan Steinerberger}
\address{Department of Mathematics, University of Washington, Seattle, WA 98195, USA} 
\email{steinerb@uw.edu}

\author[]{Rekha R. Thomas}
\address{Department of Mathematics, University of Washington, Seattle, WA 98195, USA} 
\email{rrthomas@uw.edu}

\begin{abstract}
 Given a finite, simple, connected graph $G=(V,E)$ with $|V|=n$,  we consider the associated graph Laplacian matrix $L = D - A$ with eigenvalues $0 = \lambda_1 < \lambda_2 \leq \dots \leq \lambda_n$. One can also consider the same graph equipped with positive edge weights $w:E \rightarrow \mathbb{R}_{> 0}$ normalized to $\sum_{e \in E} w_e = |E|$ and the associated weighted Laplacian matrix $L_w$. 
We say that $G$ is \textit{conformally rigid} if constant edge-weights
 maximize the second eigenvalue $\lambda_2(w)$ of $L_w$ over all $w$, and minimize $\lambda_n(w')$ of $L_{w'}$ over all $w'$, i.e., for all $w,w'$, 
 $$ \lambda_2(w) \leq \lambda_2(\ones) \leq \lambda_n(\ones) \leq \lambda_n(w').$$
Conformal rigidity requires an extraordinary amount of structure in $G$. Every edge-transitive graph is conformally rigid. We prove that every distance-regular graph, and hence every strongly-regular graph, is conformally rigid. Certain special graph embeddings can be used to characterize conformal rigidity. Cayley graphs can be conformally rigid but need not be, we prove a sufficient criterion. We also find a small set of conformally rigid graphs that do not belong into any of the above categories; these include the Hoffman graph, the crossing number graph 6B and others. Conformal rigidity can be certified via semidefinite programming, we provide explicit examples. 
\end{abstract}

\maketitle

\section{Introduction}
\subsection{Introduction}
The purpose of this paper is to introduce a property, which we call {\em conformal rigidity}, that a finite, simple graph $G=(V,E)$ may or may not have. It is the discrete analogue of a property of interest in spectral geometry and can only be expected in the presence of structure. Roughly put, it says that among all possible ways one could weigh the edges, constant weight edges are already, in a suitable sense, the canonical choice. We start by defining things more formally. Let $G=(V,E)$ be a finite, simple, connected graph on $n$ vertices. A classical object associated to $G$ is its graph Laplacian $L = D-A$, where $D \in \mathbb{R}^{n \times n}$ is the diagonal matrix collecting the degrees of the vertices and $A \in \left\{0,1\right\}^{n \times n}$ is the adjacency matrix of $G$. The matrix $L$ is positive semidefinite (psd) and has eigenvalues
$$ 0 = \lambda_1 < \lambda_2 \leq \dots \leq \lambda_n$$
which are fundamental objects in spectral graph theory. The theory generalizes to the case where edges are weighted $w_{ij} > 0$; the weighted Laplacian $L_w \in \RR^{n \times n}$ is the psd matrix defined as:
\begin{align}
    (L_w)_{ij} := \left\{ \begin{array}{rl} -w_{ij} & \textup{ if } 
    (i,j) \in E \\ 
    \sum_{j \sim i} w_{ij} & \textup { in the } (i,i) \textup{ entry}\\
    0 & \textup{ otherwise } \end{array} \right.
\end{align}
with eigenvalues $0 = \lambda_1(w) < \lambda_2(w) \leq \dots \leq \lambda_n(w)$. We use $j \sim i$ to denote that $(i,j) \in E$. We only work with undirected graphs, so there is no difference between $j \sim i$ and $i \sim j$. The constant vector $\ones := (1,1,\ldots,1)$ spans the eigenspace of $\lambda_1(w) = 0$. 
By the Rayleigh-Ritz Theorem \cite[\S 4.2]{horn-johnson}, we can write $\lambda_2(w)$ as
$$ \lambda_2(w) = \min_{\sum_{v \in V} f(v) = 0} \frac{\sum_{(i,j) \in E} w_{ij} (f(i) - f(j))^2}{ \sum_{v \in V} f(v)^2}$$
where the minimum ranges over all $f:V \rightarrow \mathbb{R}$ that are not identically 0. 
The condition $\sum_{v \in V} f(v) = 0$ forces orthogonality to the constant vector $\ones$.  Likewise, and without orthogonality conditions, we have
$$ \lambda_n(w) = \max_{f:V \rightarrow \mathbb{R}} \frac{\sum_{(i,j) \in E} w_{ij} (f(i) - f(j))^2}{ \sum_{v \in V} f(v)^2}.$$
It is well known that $G$ is connected if and only if $\lambda_2(w) > 0$. In fact, $\lambda_2(w)$ measures the connectivity of a graph: larger values correspond to a more connected graph. The largest eigenvalue $\lambda_n(w)$ is more intricate but corresponds roughly to the largest possible oscillation that can occur among the functions of 
the graph. 

\begin{center}
    \begin{figure}[h!]
    \begin{tikzpicture}
            \node at (-4,0) {\includegraphics[width=0.3\textwidth]{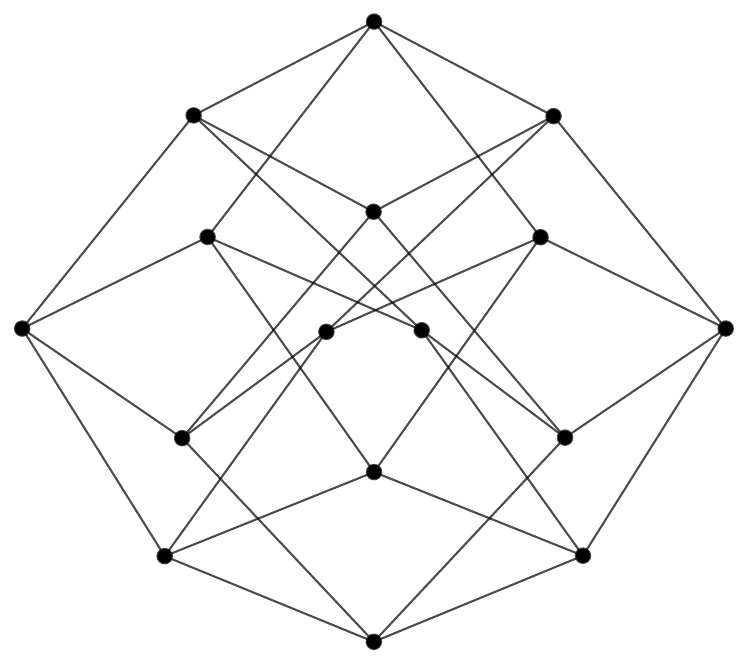}};
        \node at (0,0) {\includegraphics[width=0.25\textwidth]{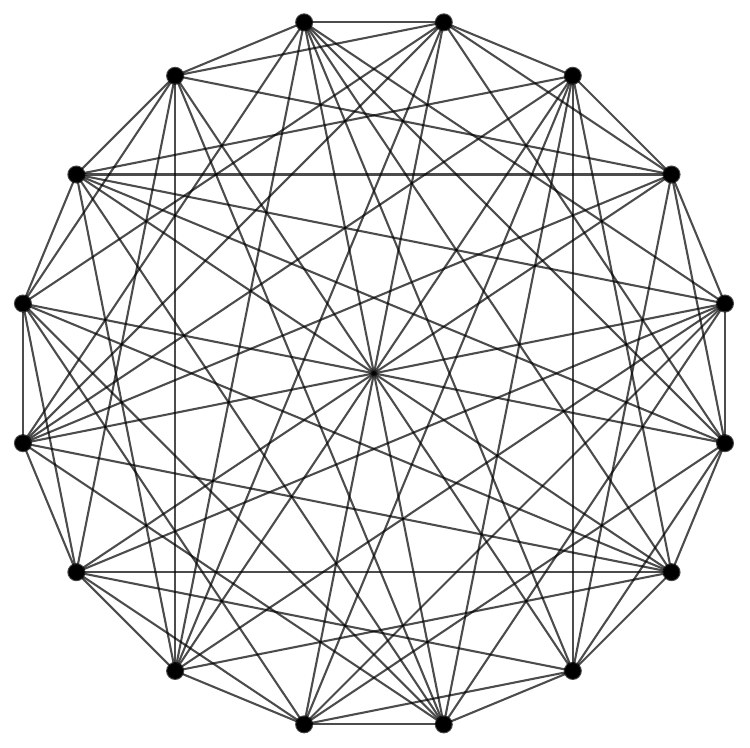}};
        \node at (4,0) {\includegraphics[width=0.25 \textwidth]{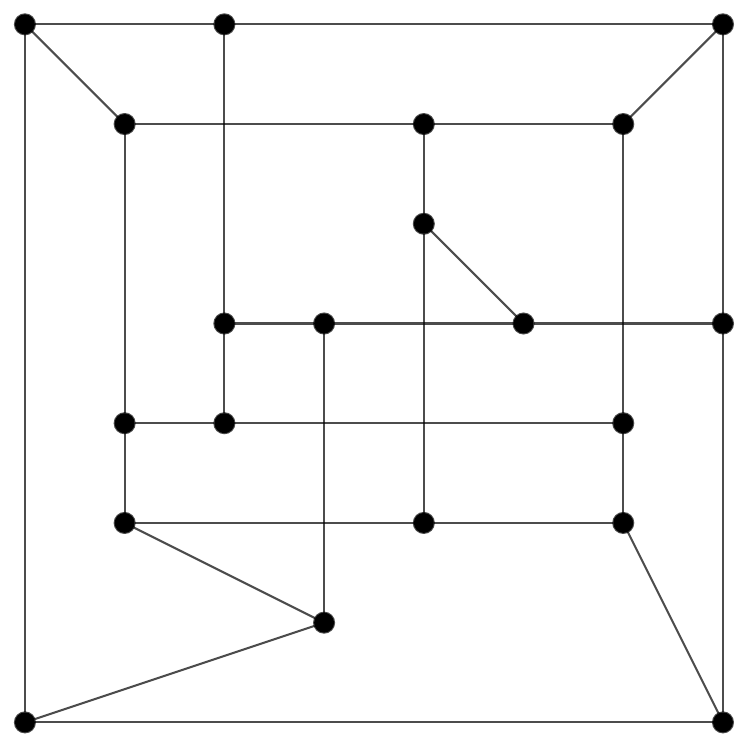}};
    \end{tikzpicture}
    \caption{Three conformally rigid graphs. Left: Hoffman graph. Middle: complement of the Shrikhande graph. Right: CNG 6B.}
    \label{fig:conformally rigid}
    \end{figure}
\end{center}
\vspace{-20pt}

It is a natural question how, for any given graph $G=(V,E)$, one should choose weights $w$ to maximize $\lambda_2(w)$ or to minimize $\lambda_n(w)$, respectively.  Since the eigenvalues scale linearly with $w$, we require both $w_{ij} \geq 0$ and the normalization $$\sum_{(i,j) \in E} w_{ij} = |E|$$
which we do throughout the  paper.  
Maximizing $\lambda_2(w)$ can be understood in several ways, including creating the `densest' network, while minimizing $\lambda_n(w)$ is akin to a smoothness condition on the weights. 
It is a way of `regularizing' the graph, finding its most symmetric representation. We are interested in graphs for which these optimization procedures cannot improve on the unweighted case.
    \begin{definition} \label{def:conformally rigid}  We say a graph $G=(V,E)$ is \textit{conformally rigid} if, for all edge weights $w,w':E \rightarrow \mathbb{R}_{\geq 0}$ normalized to $\sum_{(i,j) \in E} w_{ij} = |E|$ and $\sum_{(i,j) \in E} w_{ij}' = |E|$, 
    $$ \lambda_2(w) \leq \lambda_2(\ones) \leq \lambda_n(\ones) \leq \lambda_n(w').$$
    \end{definition}

    \begin{figure}[h!]
        \centering
\begin{tikzpicture}
    \draw [thick] (0,0) -- (6,0);
  \draw [ultra thick] (2,0) -- (4,0);
    \draw [ultra thick] (1,-0.1) -- (1,0.1);
    \draw [ultra thick] (2,-0.1) -- (2,0.1);
    \draw [ultra thick] (4,-0.1) -- (4,0.1);
 \draw [ultra thick] (5,-0.1) -- (5,0.1);
 \node at (1, -0.4) {$\lambda_2(w)$};
  \node at (2, -0.4) {$\lambda_2(\ones)$};
   \node at (4, -0.4) {$\lambda_n(\ones)$};
    \node at (5, -0.4) {$\lambda_n(w')$};
\end{tikzpicture}        
\caption{If a graph is conformally rigid, then the the interval spanned by the spectrum of the Laplacian, for any assignment of weights (fixing the total sum), always contains $[\lambda_2(\ones), \lambda_n(\ones)]$. }
        \label{fig:spectrum}
    \end{figure}
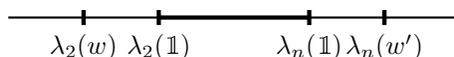
    This corresponds, in a suitable sense, to graphs which are already so symmetric that varying edge weights cannot further increase $\lambda_2$ or decrease $\lambda_n$. The definition can be phrased in terms of the spectrum of the Laplacian  matrix 
as in Fig.~\ref{fig:spectrum}. An alternative viewpoint is as follows: by changing the weights, we change the eigenvalues. As mentioned already, there is a linear scaling, doubling all the weights doubles all the eigenvalues, which makes it natural to normalize the sum of the weights. Conformal rigidity is concerned with the setting where the largest attainable second eigenvalue over all possible weights is simply the eigenvalue of the unweighted graph itself, meaning
$$\sup_{w >0} \frac{\lambda_2(w)}{ \sum_{(i,j) \in E} w_{ij}}  = \sup_{w >0} \inf_{f:V \rightarrow \mathbb{R} \atop \sum_{v \in V} f(v) = 0} \frac{\sum_{(i,j) \in E} w_{ij} (f(i) - f(j))^2}{\left( \sum_{(i,j) \in E} w_{ij}\right) \left(\sum_{v \in V} f(v)^2\right)} = \lambda_2(\ones)$$
and where, simultaneously, the same is true for the largest eigenvalue $\lambda_n(w')$
$$\inf_{w' > 0} \frac{\lambda_n(w')}{ \sum_{(i,j) \in E} w'_{ij}}  = 
\inf_{w' >0} \sup_{f:V \rightarrow \mathbb{R}} \frac{\sum_{(i,j) \in E} w'_{ij} (f(i) - f(j))^2}{\left( \sum_{(i,j) \in E} w'_{ij}\right) \left(\sum_{v \in V} f(v)^2\right)} = \lambda_n(\ones).$$
These equations show the implicit mathematical structure of the question: there is an entire additional optimization problem sitting on top of the Rayleigh-Ritz formulations of $\lambda_2$ and $\lambda_n$ which are optimization problems in themselves.\\

Let us quickly consider some examples. It may not come as a surprise that the complete graph $K_n$ is conformally rigid. An elementary proof can be found in \S~\ref{proofs:Kn}.

\begin{proposition} \label{prop:Kn}
 The complete graph $K_n$ is conformally rigid. Moreover, the only $w$ for which $ \lambda_2(w) = \lambda_2(\ones)$ and 
$\lambda_n(\ones) = \lambda_n(w'),$ is $w = w'=\ones$. 
\end{proposition}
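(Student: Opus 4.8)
The plan is to avoid the Rayleigh–Ritz machinery entirely and argue purely from a trace identity, using the special feature of $K_n$ that \emph{all} of its nonzero Laplacian eigenvalues coincide. First I would record the unweighted spectrum: for $K_n$ with $w=\ones$ one has $L = nI - J$, where $J$ is the all-ones matrix, so $\lambda_1(\ones)=0$ with eigenvector $\ones$, and $\lambda_2(\ones)=\dots=\lambda_n(\ones)=n$. Thus the inequality chain in Definition~\ref{def:conformally rigid} reduces to showing $\lambda_2(w)\le n\le\lambda_n(w')$ for all admissible $w,w'$.

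The key observation is that for \emph{any} weighting $w$ of \emph{any} graph, the diagonal of $L_w$ collects the weighted degrees, so $\Tr(L_w)=\sum_i\sum_{j\sim i}w_{ij}=2\sum_{e\in E}w_e$. For $K_n$ with the normalization $\sum_{e\in E}w_e=|E|=\binom n2$ this gives $\Tr(L_w)=n(n-1)$. Since $\lambda_1(w)=0$, the remaining $n-1$ eigenvalues $\lambda_2(w)\le\dots\le\lambda_n(w)$ are nonnegative reals summing to $n(n-1)$, hence their average is exactly $n$; consequently the smallest satisfies $\lambda_2(w)\le n=\lambda_2(\ones)$ and the largest satisfies $\lambda_n(w)\ge n=\lambda_n(\ones)$. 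Combining these with $\lambda_2(\ones)=\lambda_n(\ones)=n$ yields conformal rigidity.

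For the uniqueness statement, suppose $\lambda_2(w)=n$. Then $n$ is simultaneously a lower bound for each of $\lambda_2(w),\dots,\lambda_n(w)$ and the average of these $n-1$ numbers, which forces $\lambda_k(w)=n$ for all $k\ge 2$. Therefore $L_w$ vanishes on $\spanset(\ones)$ and acts as $n$ times the identity on $\ones^\perp$, so $L_w = n\bigl(I-\tfrac1n J\bigr)=nI-J$. Since every pair of vertices is an edge of $K_n$, comparing off-diagonal entries of $L_w$ with those of $nI-J$ gives $w_{ij}=1$ for every edge, i.e.\ $w=\ones$; the argument when $\lambda_n(w')=n$ is identical. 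I do not anticipate a genuine obstacle here — the proof is elementary — and the only step needing mild care is the last one, namely observing that a symmetric matrix with prescribed eigenvalues and eigenspaces ($0$ on $\ones$, $n$ on $\ones^\perp$) is completely determined, so that $w$ can be read off directly.
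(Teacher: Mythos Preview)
Your proof is correct and follows essentially the same route as the paper: both use the trace identity $\Tr(L_w)=2\sum_e w_e=n(n-1)$ to conclude that the $n-1$ nonzero eigenvalues average to $n$, giving $\lambda_2(w)\le n\le\lambda_n(w)$, and then observe that equality in either bound forces $L_w=nI-J$ and hence $w=\ones$. Your write-up is a bit more explicit about separating the rigidity step from the uniqueness step, but the underlying argument is identical.
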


Conformal rigidity does \textit{not} imply that 
$\ones$ is the unique weight vector that maximizes 
$\lambda_2(w)$ and minimizes $\lambda_n(w')$: a conformally rigid graph $G$ may have non-constant weights $w,w'$ with
$\lambda_2(w) = \lambda_2(\ones)$, an example being a circulant graph in Example \ref{ex:C18}, or $\lambda_n(w') = \lambda_n(\ones)$ with an example being the Haar Graph 565, \S\ref{subsec:symmetry}.

\subsection{Motivation} Our definition of conformal rigidity in graphs has a number of different motivations, we describe three below.

\subsubsection{Conformal Optimization.} The first motivation comes from 
(continuous) spectral geometry. 
Let $(M,g)$ be a smooth, compact Riemannian manifold of dimension $n \geq 2$. The Laplace-Beltrami operator $-\Delta$ has a discrete (infinite) sequence of eigenvalues $0 = \lambda_1 < \lambda_2 \leq \dots$
and it is a classical problem to understand which manifold minimizes or maximizes $\lambda_k$ under suitable conditions. Let us now consider changes of the underlying metric $g$ and introduce the conformal class of the metric  
$$[g] = \left\{h = e^{2u} g, u \in C^{\infty}(M) \right\}.$$
This corresponds to local rescalings of the metric that preserve angles: one can blow up certain parts of the manifold and one can shrink other parts but one cannot change the local conformal structure. We can now define
$$ \Lambda_k(M, [g]) = \sup_{ h \in [g]} \lambda_k(h) \cdot \mbox{vol}_{h}(M)^{\frac{2}{n}}.$$
The volume term counters the fact that this procedure may increase or decrease volume (and Laplacian eigenvalues are sensitive to that).
Alternatively, one could only consider metrics for which $ \mbox{vol}_{h}(M)^{\frac{2}{n}}$ is of a fixed size; this would correspond to our edge normalization $\sum_{(i,j) \in E} w_{ij} = |E|$.  Korevaar \cite{korevaar} proved that this supremum is finite for each value of $k$. Very much as in our setting, metrics that attain the supremum would correspond to the `most symmetric' realization of the manifold in its conformal class. As a motivating result, we recall a classic result of Hersch \cite{hersch} and El Soufi - Ilias \cite{el} showing that
$$ \Lambda_2(\mathbb{S}^n, [dx]) = n \cdot \omega_n^{\frac{2}{n}},$$
where $\omega_n$ is the volume of the standard sphere and equality is attained if and only if the sphere is round. Phrased differently, the round sphere is already the most symmetric realization.
Conformally rigid graphs are, in this sense, the discrete analogues of spheres in this context: their unweighted version is already spectrally extremal among all possible ways of distributing weights over the edges.

\subsubsection{Frame Bounds.} Another reason for why we are interested in conformally rigid graphs is the following basic fact corresponding to the spectrum of the complete graph $K_n$: given any list of $n$ numbers $x_1, \dots, x_n \in \mathbb{R}$, we have
$$ \sum_{i=1}^{n} x_i = 0 \implies \sum_{1\leq i<j\leq n} (x_i - x_j)^2 = n \sum_{i = 1}^{n} x_i^2.$$
One way of phrasing this result is as follows: it is possible deduce the $\ell^2-$norm of $(x_1, \dots, x_n)$ just from knowing all the pairwise differences $x_i - x_j$ (up to, of course, additive constants which cannot be recovered). There is a natural question: if we no longer have access to all pairwise differences but only some (encoded by  a graph), how accurately can we recover the norm? This is determined by the spectrum
$$ \sum_{i=1}^{n} x_i = 0 \implies \qquad \lambda_2\sum_{i = 1}^{n} x_i^2 \leq  \sum_{(i,j) \in E}^{} (x_i - x_j)^2 \leq \lambda_n \sum_{i = 1}^{n} x_i^2,$$
and can be interpreted as frame bounds, with $\lambda_n/\lambda_2$ being the condition number. If we are restricted to working with a fixed set of differences, a fixed $E$, then it is a natural question whether one can improve the accuracy of recovery by adding weights. Conformally rigid graphs have the property that allowing weights can neither increase $\lambda_2$ nor decrease $\lambda_n$, the graphs are already optimal in this regard. One way of stating this precisely is via the following Proposition which follows immediately from the previously mentioned sup/inf characterization.

\begin{proposition} \label{prop:weight}
    Let $G=(V,E)$ be conformally rigid. Then, for all possible (positive) edge weights $w_{ij}$ there always exists a function $f:V \rightarrow \mathbb{R}$ with mean value 0, meaning $\sum_{v \in V} f(v) = 0$, which oscillates little in the sense of
  $$\sum_{(i,j) \in E} w_{ij} (f(i) - f(j))^2 \leq  \frac{\lambda_2(\ones)}{|E|}\left( \sum_{(i,j) \in E} w_{ij}\right) \left(\sum_{v \in V} f(v)^2\right).$$ 
  Moreover, there exists a function $f:V \rightarrow \mathbb{R}$ which oscillates a lot insofar as
    $$\sum_{(i,j) \in E} w_{ij} (f(i) - f(j))^2 \geq \frac{\lambda_n(\ones)}{|E|}\left( \sum_{(i,j) \in E} w_{ij}\right) \left(\sum_{v \in V} f(v)^2\right).$$
\end{proposition}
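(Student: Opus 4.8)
The plan is to derive both inequalities directly from Definition \ref{def:conformally rigid} together with the Rayleigh-Ritz formulas for $\lambda_2(w)$ and $\lambda_n(w)$ recalled above, after first reducing to normalized weights via the linear scaling of Laplacian eigenvalues.

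First I would dispose of the normalization. Given arbitrary positive edge weights $w$, set $\hat w := \bigl(|E| / \sum_{(i,j) \in E} w_{ij}\bigr)\, w$, which satisfies $\sum_{(i,j)\in E}\hat w_{ij} = |E|$. Since $L_w = \bigl(\sum_{(i,j)\in E} w_{ij}/|E|\bigr) L_{\hat w}$, every eigenvalue scales by the same factor, so $\lambda_2(w) = \bigl(\sum_{(i,j)\in E} w_{ij}/|E|\bigr)\,\lambda_2(\hat w)$ and similarly for $\lambda_n$. Conformal rigidity applied to $\hat w$ gives $\lambda_2(\hat w) \le \lambda_2(\ones)$ and $\lambda_n(\hat w) \ge \lambda_n(\ones)$, hence
$$\lambda_2(w) \le \frac{\lambda_2(\ones)}{|E|}\sum_{(i,j)\in E} w_{ij}, \qquad \lambda_n(w) \ge \frac{\lambda_n(\ones)}{|E|}\sum_{(i,j)\in E} w_{ij}.$$

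Next I would use that the extrema in the Rayleigh quotients are attained. The Laplacian $L_w$ is a symmetric matrix on the finite-dimensional space $\RR^V$, so it admits an orthonormal eigenbasis; take a unit eigenvector $f$ for $\lambda_2(w)$, which is orthogonal to the $\lambda_1$-eigenvector $\ones$ and therefore satisfies $\sum_{v \in V} f(v) = 0$, and a unit eigenvector $g$ for $\lambda_n(w)$. For these functions the Rayleigh quotient equals the eigenvalue exactly, i.e.
$$\sum_{(i,j)\in E} w_{ij}(f(i)-f(j))^2 = \lambda_2(w)\sum_{v\in V} f(v)^2, \qquad \sum_{(i,j)\in E} w_{ij}(g(i)-g(j))^2 = \lambda_n(w)\sum_{v\in V} g(v)^2.$$
Combining each identity with the corresponding bound from the previous step yields precisely the two displayed inequalities in the statement, with $f$ and $g$ the claimed functions.

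There is essentially no obstacle here; the only points deserving a word are that the min and max defining $\lambda_2(w)$ and $\lambda_n(w)$ are genuinely achieved — which holds because we are in finite dimensions, and one may simply quote the Rayleigh-Ritz theorem already cited — and that an eigenvector realizing $\lambda_2(w)$ automatically has mean value zero, by orthogonality to $\ones$. If one prefers not to invoke attainment, the same conclusion follows by choosing $f$ and $g$ to be sufficiently good near-minimizers and near-maximizers of the respective quotients; using attainment merely makes the ``there exists a function $f$'' phrasing cleanest.
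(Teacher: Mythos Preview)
Your argument is correct and is essentially the same as the paper's: the paper states that the proposition ``follows immediately from the previously mentioned sup/inf characterization,'' and what you have written is exactly a careful unpacking of that remark --- rescale to normalized weights, apply Definition~\ref{def:conformally rigid}, and then pick an eigenvector to realize the Rayleigh quotient.
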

Note that when all the weights are $w_{ij} =1$, then the Proposition is merely a form of the classical Minimax formulation for eigenvalues. The new ingredient, given by conformal rigidity, is that we allow for general weights.

\subsubsection{Sparsification.} A final motivation comes from a recently proposed framework for Graph Sparsification due to Babecki and the authors \cite{sparse}. In this framework, the hypercube graph cannot be further sparsified. We were wondering how general this result is which is what lead us to the notion of conformally rigid graphs. If (up to scaling) $\ones$ is the unique weight that maximizes $\lambda_2(w)$, then $G$ cannot be sparsified in any setting of \cite{sparse}. This led us to wonder if all Cayley graphs have this property. As we will see in this paper, not all Cayley graphs have this property. This line of investigation led us to the notion of conformal rigidity. It is an interesting question to understand which conformally rigid graphs have unique maximizers for one or both of $\lambda_2(w)$ and $\lambda_n(w)$. 

\subsection{Related results}

In this section we collect together various results in the literature that 
are close to the notion of conformal rigidity. 

\subsubsection{The optimization perspective} \label{subsec:opt}
The problem of maximizing $\lambda_2(w)$ subject to $\sum w_{ij} = |E|$ appears to have been considered first by Fiedler \cite{fiedler1990trees, fiedler1993minmax}
and references therein. Fiedler calls the maximum value of $\lambda_2(w)$ the {\em absolute connectivity} of the graph.  Finding weights that maximize $\lambda_2(w)$ subject to a general weighted  inequality of the form $\sum d_{ij}^2 w_{ij} \leq 1$ underlies the problem of finding the fastest mixing Markov process on $G$. This problem was studied by Sun-Boyd-Xiao-Diaconis 
\cite{sun-boyd-xiao-diaconis} where they modeled it as a {\em semidefinite program} which is a type of convex optimization problem. We will expand on this in \S~\ref{sec:SDP}.
Boyd-Diaconis-Parrilo-Xiao \cite{boyd-diaconis-parrilo-xiao} exploited the symmetries of the graph to speed up computations. 
We will adopt the methods in both \cite{boyd-diaconis-parrilo-xiao} and \cite{sun-boyd-xiao-diaconis} to certify the conformal rigidity of graphs.
The semidefinite program viewpoint of maximizing $\lambda_2(w)$ was also studied in \cite{goering-helmberg-wappler} where they connect it to the problem of separators in $G$. Both 
\cite{goering-helmberg-wappler} and \cite{sun-boyd-xiao-diaconis} interpret this problem as a graph embedding problem.
The problem of minimizing $\lambda_n(w)$ is also a semidefinite program with interpretations in terms of graph embeddings and partitions. 
The difference $\lambda_n(w)-\lambda_2(w)$ of the Laplacian $L_w$ is called the {\em spectral width} of $L_w$.  The problem of  minimizing the spectral width of $G$ is 
\begin{align} \label{eq:prob spectral width}
    \min \left\{ \lambda_n(w) - \lambda_2(w) \,:\, \sum w_{ij} = |E|, \,\,\,w \geq 0 \right\},  
\end{align}
and is also a semidefinite program   with connections to graph embeddings and separators \cite{goering-helmberg-reiss-spectralwidth}. The spectral width  bounds the {\em uniform sparsest cut} in $G$ since 
$$\frac{1}{n} \lambda_2(w) \leq 
\frac{w(S, V \setminus S)}{|S| |V \setminus S|}
\leq \frac{1}{n} \lambda_n(w)$$
where $w(S, V \setminus S)$ is the weight of the cut in $G$ given by the partition $V = S \cup (V \setminus S)$. \\

\textbf{Remark.} The spectral width minimization problem looks similar to 
conformal rigidity and so we pause to point out an important difference. 
In our setup we are solving
\begin{align} \label{eq:our prob}
    \min \left\{ \lambda_n(w) - \lambda_2(w') \,:\, \ones^\top w = \ones^\top w'=|E|, \,\,\,w,w'\geq 0 \right\}. 
\end{align}
where $w$ does not have to equal $w'$. If $G$ is conformally rigid then $\ones=w=w'$ is an optimal solution to \eqref{eq:our prob} and the optimal value of \eqref{eq:our prob} is the smallest spectral width of $G$. However, if 
$\ones$ is an optimal solution of 
\eqref{eq:prob spectral width}, $G$ may not be conformally rigid since \eqref{eq:our prob} allows for different weights in $\lambda_2(w)$ and $\lambda_n(w')$ potentially making their difference smaller than 
$\lambda_n(\ones) - \lambda_2(\ones)$.

\subsubsection{The discrete Wirtinger inequality}
Another topic that is related to, and partially inspired our results, is the discrete Wirtinger inequality proved by Fan, Taussky and Todd \cite{fan} in 1955 (see also an earlier paper of Schoenberg \cite{schoen}).
\begin{theorem}[Fan-Taussky-Todd \cite{fan}] If $x_1, \dots, x_{n+1}$ are real numbers so that $x_1 = x_{n+1}$ and $\sum_{i=1}^{n} x_i = 0$, then
$$ \sum_{i=1}^{n} (x_i - x_{i+1})^2 \geq 4 \sin^2\left(\frac{\pi}{n}\right) \sum_{i=1}^{n} x_i^2$$
with equality unless $x_i = A \cos(2\pi i/n) + B \sin(2 \pi i/n)$.
\end{theorem}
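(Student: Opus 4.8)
This is an immediate consequence of the Rayleigh--Ritz characterization of $\lambda_2$ recalled in the introduction, applied to the cycle graph $C_n$. First I would identify $(x_1,\dots,x_n)$ with a function on the vertex set $\ZZ/n\ZZ$, using the hypothesis $x_{n+1}=x_1$ to read the indices cyclically. The cycle $C_n$ has edge set $\{\{i,i+1\}: i\in\ZZ/n\ZZ\}$, so with $L$ its unweighted Laplacian one has $\sum_{i=1}^n(x_i-x_{i+1})^2 = x^\top L x$, while the constraint $\sum_{i=1}^n x_i = 0$ says exactly $x\perp\ones$. Hence the variational formula for $\lambda_2(\ones)$ gives $x^\top L x \ge \lambda_2(C_n)\sum_{i=1}^n x_i^2$, with equality iff $x$ lies in the $\lambda_2$-eigenspace. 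It remains to identify $\lambda_2(C_n)$ and that eigenspace.

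Since $L = 2I - S - S^\top$, where $S$ is the cyclic shift, $L$ is circulant and is diagonalized by the Fourier basis: writing $\zeta = \exp(2\pi\sqrt{-1}/n)$, the vectors $v^{(k)} = (\zeta^{jk})_{j}$ for $k=0,1,\dots,n-1$ are eigenvectors of $L$ with eigenvalues $\mu_k = 2 - 2\cos(2\pi k/n) = 4\sin^2(\pi k/n)$. Using $\mu_k = \mu_{n-k}$ and that $t\mapsto\sin^2(\pi t/n)$ is strictly increasing on $[0,n/2]$, the distinct eigenvalues satisfy $0 = \mu_0 < \mu_1 < \mu_2 < \cdots$, and for $n\ge 3$ the value $\mu_1 = 4\sin^2(\pi/n)$ has multiplicity two (attained at $k=1$ and $k=n-1$). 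Thus $\lambda_2(C_n) = 4\sin^2(\pi/n)$, which proves the inequality.

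For the equality case, expand $x = \sum_{k=0}^{n-1} c_k v^{(k)}$; mean-zero forces $c_0 = 0$, so $x^\top L x = \sum_{k=1}^{n-1}\mu_k|c_k|^2 \ge \mu_1\sum_{k=1}^{n-1}|c_k|^2 = \mu_1\|x\|^2$. Equality holds iff $c_k = 0$ whenever $\mu_k > \mu_1$, i.e. iff $x$ lies in the real span of $(\cos(2\pi j/n))_j$ and $(\sin(2\pi j/n))_j$, which is exactly $x_j = A\cos(2\pi j/n) + B\sin(2\pi j/n)$. Conversely any such $x$ is automatically mean-zero (a full period of $\cos$ or $\sin$ sums to zero) and realizes equality, so this describes precisely the equality cases.

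The only point needing a little care is the bookkeeping around the multiplicity-two $\mu_1$-eigenspace: one must pass from the complex exponentials $v^{(1)}, v^{(n-1)}$ to the real functions $\cos(2\pi j/n)$ and $\sin(2\pi j/n)$, and note that $n\ge 3$ (or even $n\ge 2$) is implicitly assumed. There is no analytic obstacle — once the problem is recast as a Rayleigh quotient, everything reduces to the elementary spectral theory of circulant matrices. One could alternatively give a purely algebraic proof by summation by parts, but the spectral route is cleanest within the framework of this paper.
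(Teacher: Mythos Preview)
Your proof is correct and matches the paper's treatment: the paper does not give a detailed proof of the Fan--Taussky--Todd inequality but simply remarks that ``in modern language, this inequality is a way of describing the second eigenvalue $\lambda_2$ of the graph Laplacian on the cycle graph $C_n$,'' which is precisely the Rayleigh--Ritz argument you carry out. Your additional computation of the circulant spectrum and the identification of the equality case via the Fourier basis are the natural details one would supply to flesh out that remark.
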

In modern language, this inequality is a way of describing the second eigenvalue $\lambda_2$ of the graph Laplacian on the cycle graph $C_n$ with $n$ vertices.
It is clear that this argument, interpreting the quadratic form as that of the Laplacian of $C_n$ and the constant as the spectral gap, will generalize to weights, but the arising matrix computations are not  trivial \cite{iz, mil, zhang}. We present a somewhat dual statement of the usual discrete Wirtinger inequality with weights.

\begin{corollary}
    The cycle graph $C_n$ is conformally rigid. In particular, if $w_1, \dots w_n$ are positive edge weights, then there always exist real numbers $x_1, \dots, x_{n+1}$ so that $x_1 = x_{n+1}$ as well as $\sum_{i=1}^{n} x_i = 0$ and
    $$ \sum_{i=1}^{n} w_i (x_i - x_{i+1})^2 \leq 4 \sin^2\left(\frac{\pi}{n}\right)\left(\frac{1}{n} \sum_{i=1}^{n} w_i\right) \sum_{i=1}^{n} x_i^2$$
\end{corollary}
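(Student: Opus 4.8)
The plan is to deduce conformal rigidity of $C_n$ from the cyclic symmetry of the cycle, and then simply read the asserted inequality off the Rayleigh--Ritz formula. First I would record the two convexity facts that are implicit in the variational characterizations: for a fixed test function $f$, the quotient $\sum_{(i,j)\in E} w_{ij}(f(i)-f(j))^2/\sum_{v} f(v)^2$ is \emph{linear} in $w$, so $\lambda_2(w)$, being a pointwise minimum of such linear functions of $w$, is a concave function of $w$, while $\lambda_n(w)$, being a pointwise maximum, is convex in $w$. Both are also positively homogeneous: $\lambda_k(cw)=c\,\lambda_k(w)$ for $c>0$.

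Next I would exploit the rotation group $\ZZ_n$ acting on $C_n$. This action is transitive on the $n$ edges, and for $n\ge 3$ the stabilizer of any edge is trivial, so for a fixed edge $e$ the map $\sigma\mapsto\sigma(e)$ is a bijection from $\ZZ_n$ onto $E$. For any rotation $\sigma$, the matrix $L_{w^\sigma}$ is conjugate to $L_w$ by a permutation matrix, hence $\lambda_k(w^\sigma)=\lambda_k(w)$ for all $k$. Given any normalized weight $w$ (so $\sum_{e} w_e=|E|=n$), put $\bar w:=\tfrac1n\sum_{\sigma\in\ZZ_n}w^\sigma$; by the bijection above $\bar w_e=\tfrac1n\sum_{e'\in E}w_{e'}=1$ for every edge, i.e. $\bar w=\ones$. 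Concavity of $\lambda_2$ then gives $\lambda_2(\ones)=\lambda_2(\bar w)\ge\tfrac1n\sum_\sigma\lambda_2(w^\sigma)=\lambda_2(w)$, and convexity of $\lambda_n$ gives $\lambda_n(\ones)=\lambda_n(\bar w)\le\tfrac1n\sum_\sigma\lambda_n(w^\sigma)=\lambda_n(w)$. Together with $\lambda_2(\ones)\le\lambda_n(\ones)$ this is exactly Definition~\ref{def:conformally rigid}, so $C_n$ is conformally rigid. (This is just the specialization to $C_n$ of the general fact that edge-transitive graphs are conformally rigid.)

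For the explicit statement, I would recall that the Laplacian of $C_n$ with unit weights has eigenvalues $2-2\cos(2\pi k/n)$, $k=0,\dots,n-1$, so $\lambda_2(\ones)=4\sin^2(\pi/n)$. Given arbitrary positive weights $w_1,\dots,w_n$, homogeneity and the rigidity bound give $\lambda_2(w)\le\bigl(\tfrac1n\sum_i w_i\bigr)\lambda_2(\ones)=4\sin^2(\pi/n)\bigl(\tfrac1n\sum_i w_i\bigr)$. The Rayleigh--Ritz formula for $\lambda_2(w)$ on the cycle, whose edges are the pairs $(i,i+1)$ read modulo $n$, attains its minimum at some nonzero $f$ with $\sum_i f(i)=0$; setting $x_i:=f(i)$ for $1\le i\le n$ and $x_{n+1}:=x_1$ gives $x_1=x_{n+1}$, $\sum_{i=1}^n x_i=0$, and $\sum_{i=1}^n w_i(x_i-x_{i+1})^2=\lambda_2(w)\sum_i x_i^2\le 4\sin^2(\pi/n)\bigl(\tfrac1n\sum_i w_i\bigr)\sum_i x_i^2$, which is the claim.

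I do not expect a genuine obstacle here: the only real content is the observation that $\lambda_2$ is concave and $\lambda_n$ convex in $w$, plus the orbit bookkeeping that forces $\bar w=\ones$; everything after that is unwinding definitions. The two points to state carefully are that the symmetrized weight $\bar w$ is again a legitimate normalized weight (immediate, since it equals $\ones$) and that the minimum in the Rayleigh quotient for $\lambda_2(w)$ is actually attained, so that the produced vector $x$ satisfies the inequality with the stated constant rather than merely approaching it.
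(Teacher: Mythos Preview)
Your proof is correct and follows essentially the same approach as the paper: the paper deduces the Corollary from Proposition~\ref{prop:weight} together with the conformal rigidity of $C_n$, which in turn follows from edge-transitivity (Proposition~\ref{prop:edge-transitive}), and the paper's proof of that proposition is precisely the symmetry-reduction/averaging argument you spell out. The only cosmetic difference is that the paper phrases the averaging step in terms of the SDP feasible region (\S\ref{sec:SDP}) rather than directly via concavity of $\lambda_2$ and convexity of $\lambda_n$, but these are the same argument.
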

One way of phrasing this Corollary is that the largest constant in the discrete Wirtinger inequality is attained if all the weights are the same: one cannot hope to get an improved discrete Wirtinger inequality by a clever choice of weights. Note that this is simply Proposition \ref{prop:weight} applied to the cycle graph. One also obtains a dual result (from the second part of Proposition \ref{prop:weight}): for any choice of weights there exists a function that oscillates at least as rapidly as the most oscillating function for equal weights, however, this case seems to be less relevant in applications.

\subsection{Organization of the paper}
In \S~\ref{sec:main results} we state our main results beginning with a summary. In \S~\ref{sec:SDP} we explain how semidefinite programming can be used to both test and certify conformal rigidity. We also explain how the automorphisms of the graph can sometimes drastically reduce the computational effort and also provide structural insight. \S~\ref{sec:embeddings} discusses the connection between conformal rigidity and graph embeddings into Euclidean space. Section~\ref{sec:proofs} contains the proofs of all our main results. Section~\ref{sec:sdp certificates} surveys various ways of proving that an explicitly given graph is conformally rigid via semidefinite programming certificates.

\section{Main Results} \label{sec:main results}
\subsection{Summary}
Our main result is that the class of conformally rigid graphs form an interesting and highly nontrivial collection of graphs. 
\begin{enumerate}
    \item All edge-transitive graphs are conformally rigid and this is true for a simple reason (see \S 2.1). The smallest example of a conformally rigid graph that is not edge-transitive (that we know of) is the Hoffman graph on 16 vertices.
   \item All distance-regular graphs and, as a special case, all strongly-regular graphs are conformally rigid.
    \item Some Cayley graphs are conformally rigid and others are not. We provide a sufficient condition for Cayley graphs to be conformally rigid (Theorem \ref{thm:Cayley}). The smallest Cayley graph that we found that is conformally rigid but \textit{not} edge-transitive is a circulant graph on 18 vertices (see \S 2.2). 
    \item As an application of the criterion for Cayley graphs, we found (empirically) that there seems to be a large number of circulant graphs that are conformally rigid (see \S 2.3). They seem to have a rich structure. There are also infinite families of circulants that are not conformally rigid. 
    \item Conformal rigidity can be characterized in terms of the existence of 
    special spectral embeddings of the graph on the eigenspaces of $\lambda_2(\ones)$ and $\lambda_n(\ones)$. These embeddings are {\em edge-isometric}, meaning that that 
    all edge lengths must be equal (see \S~\ref{sec:embeddings}).
    \item For any given graph $G=(V,E)$ it can be decided whether it is conformally rigid with a finite amount of computation by phrasing it as a semidefinite program. We explain how this is done and use it to verify conformal rigidity of a number of seemingly isolated examples that do not seem to fall into any of the above categories
    \begin{itemize}
        \item the Hoffman graph on 16 vertices.
         \item the crossing number graph 6B on 20 vertices \cite{crossing}.
        \item the distance-2 graph of the Klein graph on 24 vertices.
        \item the $(20,8)$ accordion graph \cite{accordion} on 40 vertices.
    \end{itemize}
\end{enumerate}

We summarize our results in the figure below. A solid arrow implies containment while a dashed arrow implies partial containment.

\begin{center}
    \begin{figure}[h!]
        \centering
    \begin{tikzpicture}
    \node at (0,0) {distance-regular};
        \node at (-2,1) {distance-transitive};
    \node at (2,1) {strongly regular};
    \draw[->] (2,0.8) -- (1.3, 0.2);
        \draw[->] (-2,0.8) -- (-1.3, 0.2);
    \draw [] (-1.75,-1.2) -- (1.8,-1.2) -- (1.8,-0.8) -- (-1.75, -0.8) -- (-1.75, -1.2);
    \node at (0,-1) {\textsc{ conformally rigid}};
      \draw[->] (0,-0.3) -- (0,-0.7);
      \node at (-3,0) {arc-transitive};
          \draw[->] (-2,0.8) -- (-2.5, 0.2);
            \node at (-3,-2) {vertex- and edge-transitive};
                \draw[->] (-3,-0.2) -- (-3, -1.7);
                    \draw[->] (-0.8,-2) -- (-0.2, -2);
                        \draw[->] (0.5, -1.7) -- (0.5, -1.3);
           \node at (1,-2) {edge-transitive};
        \node at (4, 0) {Cayley};
        \draw[->, dashed] (3.8, -0.2) -- (2,-0.8);
        \node at (3.8, -0.6) {\tiny Theorem \ref{thm:Cayley}};
        \node at (-0.5, -1.5) {\tiny Proposition \ref{prop:edge-transitive}};
      \node at (-0.9, -0.5) {\tiny Theorem \ref{thm:distanceregular}};
    \end{tikzpicture}
        \caption{Summary of our main results.}
        \label{fig:summary}
    \end{figure}
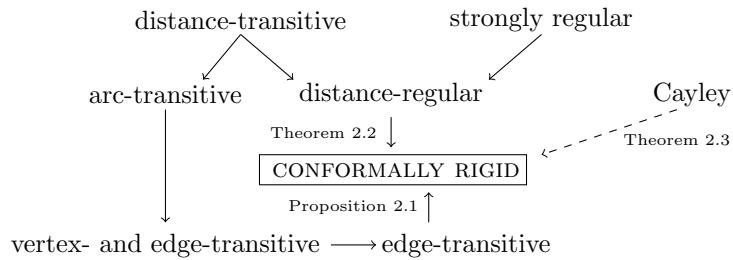
\end{center}

Throughout this paper we assume that the vertex set of $G=(V,E)$ is enumerated by the first $n$ integers, $V = [n]$, and that all graphs are connected, undirected and simple. Also, for simplicity, we sometimes write $\lambda_2$ for $\lambda_2(\ones)$ and $\lambda_n$ for $\lambda_n(\ones)$.

\subsection{Edge-transitive graphs are conformally rigid}

Recall that an {\em automorphism} of $G=(V,E)$ is a permutation $\pi$ of its vertices so that $(i,j) \in E$ if and only if $(\pi(i),\pi(j)) \in E$. The set of all automorphisms of $G$, denoted $\textup{Aut}(G)$ and called the 
{\em automorphism group} of $G$, is a subgroup of $S_n$. 
The graph $G$ is {\em vertex-transitive} if all vertices of $G$ lie in a single orbit of $\textup{Aut}(G)$. The action of $\textup{Aut}(G)$ on $V$ induces an action on $E$ via $(i,j) \mapsto (\pi(i),\pi(j))$. The graph $G$ is {\em edge-transitive} if all edges of $G$ lie in a single orbit 
of $\textup{Aut}(G)$. Complete graphs are edge-transitive and so are many other common families of graphs such as cycles, complete bipartite graphs and edge graphs of hypercubes. All distance-transitive graphs are edge-transitive (see Fig.~\ref{fig:summary}). The following is a first observation.

\begin{proposition} \label{prop:edge-transitive}
Every edge-transitive graph is conformally rigid.
\end{proposition}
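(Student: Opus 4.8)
The plan is to exploit the averaging/symmetrization idea: if $G$ is edge-transitive, then any weight vector $w$ can be averaged over $\operatorname{Aut}(G)$ to produce a more symmetric weight, and this averaging can only help (or at least not hurt) the relevant eigenvalue. First I would recall that $\operatorname{Aut}(G)$ acts transitively on $E$, so the orbit of any edge under the group is all of $E$. Given any normalized weight $w$ with $\sum_{(i,j)\in E} w_{ij} = |E|$, define the symmetrized weight $\bar w_{ij} = \frac{1}{|\operatorname{Aut}(G)|}\sum_{\pi \in \operatorname{Aut}(G)} w_{\pi(i)\pi(j)}$. Because the action on $E$ is transitive, $\bar w$ is constant on edges, and since the normalization is preserved by permuting edges, $\bar w = \ones$. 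So the constant weight is the average of the orbit of $w$.

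Next I would transfer this to the eigenvalues. For a fixed automorphism $\pi$, the weighted Laplacian $L_{w^\pi}$ (where $w^\pi_{ij} = w_{\pi(i)\pi(j)}$) is obtained from $L_w$ by conjugating with the permutation matrix $P_\pi$, hence $\lambda_2(w^\pi) = \lambda_2(w)$ and $\lambda_n(w^\pi) = \lambda_n(w)$ for all $\pi$. Now I would use concavity of $\lambda_2(w)$ as a function of $w$ and convexity of $\lambda_n(w)$. Concretely, from the Rayleigh quotient formula,
$$\lambda_2(w) = \min_{\sum_{v} f(v) = 0,\ f \neq 0}\ \frac{\sum_{(i,j)\in E} w_{ij}(f(i)-f(j))^2}{\sum_v f(v)^2},$$
the numerator is linear in $w$ for each fixed $f$, so $\lambda_2(w)$ is a minimum of linear functions of $w$, hence concave. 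Likewise $\lambda_n(w)$ is a maximum of linear functions of $w$, hence convex. Therefore
$$\lambda_2(\ones) = \lambda_2\!\left(\tfrac{1}{|\operatorname{Aut}(G)|}\sum_\pi w^\pi\right) \geq \tfrac{1}{|\operatorname{Aut}(G)|}\sum_\pi \lambda_2(w^\pi) = \lambda_2(w),$$
and symmetrically $\lambda_n(\ones) \leq \lambda_n(w')$ for every normalized $w'$. Combined with the trivial inequality $\lambda_2(\ones) \leq \lambda_n(\ones)$, this gives exactly the chain in Definition \ref{def:conformally rigid}, so $G$ is conformally rigid.

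I do not expect a serious obstacle here; the argument is essentially a one-line convexity-plus-symmetry observation once set up. The only points needing care are: verifying that $\bar w$ inherits the normalization $\sum \bar w_{ij} = |E|$ (immediate, since each $w^\pi$ does and the normalization is linear), confirming that edge-transitivity — rather than mere vertex-transitivity — is what forces $\bar w$ to be genuinely constant on all of $E$, and stating the concavity/convexity of $\lambda_2(\cdot)$ and $\lambda_n(\cdot)$ cleanly from the Rayleigh-Ritz expressions already recorded in the introduction. A minor subtlety is that the Rayleigh quotients are stated for $w > 0$ while the definition allows $w \geq 0$; this is handled either by a density/continuity argument or by noting the quadratic-form expressions extend verbatim to nonnegative weights.
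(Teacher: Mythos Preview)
Your proof is correct and rests on the same symmetrization idea as the paper: average $w$ over $\operatorname{Aut}(G)$, observe that edge-transitivity forces the average to be the constant weight, and use that this averaging cannot decrease $\lambda_2$ or increase $\lambda_n$. The paper phrases this via the SDP symmetry reduction of \S\ref{sec:SDP} (the feasible region of the SDP is $\operatorname{Aut}(G)$-invariant and convex, so one may restrict to fixed-point weights, which under edge-transitivity is a single one-parameter family), whereas you argue directly from the concavity of $w\mapsto\lambda_2(w)$ and convexity of $w\mapsto\lambda_n(w)$ obtained from the Rayleigh--Ritz expressions; your route is slightly more self-contained in that it avoids setting up the SDP framework, while the paper's route has the advantage of slotting into the general machinery used elsewhere.
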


The proof of this Proposition is a simple corollary of the technique of symmetry reduction (see for example~\cite{boyd-diaconis-parrilo-xiao}) which we recall in \S~\ref{sec:SDP}. All proofs of results in this section can be found in \S~\ref{sec:proofs}.
One way of interpreting Proposition~\ref{prop:edge-transitive} is as follows: edge-transitivity is a strong symmetry property that essentially says that all edges in $G$ are the ``same'' which points toward weighting them equally. Therefore, in some sense, the most interesting conformally rigid graphs are the ones that are \textit{not} edge-transitive since they are conformally rigid for a different reason. In the rest of the paper, we will focus on such graphs.

\subsection{Distance-regular graphs}
\label{subsec:distance regular}
Recall that a finite graph $G=(V,E)$ is distance-regular if for any two vertices $u,v$, the number of vertices that is distance $i$ from $u$ and distance $j$ from $v$ only depends on $i,j$ and the distance $d(u,v)$.

\begin{theorem} \label{thm:distanceregular}
Every distance-regular graph is conformally rigid.
\end{theorem}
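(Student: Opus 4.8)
The plan is to exploit the very rigid combinatorial structure of a distance-regular graph (DRG) together with the variational characterizations of $\lambda_2$ and $\lambda_n$ and the symmetry-reduction machinery of \S~\ref{sec:SDP}. The key observation is that for a distance-regular graph of diameter $d$, the \emph{distance matrices} $A_0 = I, A_1 = A, A_2, \ldots, A_d$ (where $(A_i)_{uv} = 1$ iff $d(u,v) = i$) span a commutative matrix algebra — the Bose--Mesner algebra — which contains the Laplacian $L = D - A = kI - A$ (here $k$ is the common degree). In particular $L$ and every $A_i$ are simultaneously diagonalizable, and the common eigenspaces $V_0, V_1, \ldots, V_d$ are exactly the eigenspaces of $L$; so $L$ has only $d+1$ distinct eigenvalues, $\lambda_2(\ones)$ is the second smallest of these, and $\lambda_n(\ones)$ is the largest. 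The strategy is then: construct an edge-isometric spectral embedding onto the $\lambda_2(\ones)$-eigenspace and another onto the $\lambda_n(\ones)$-eigenspace, and feed these into whatever SDP/averaging criterion the paper sets up (the embeddings of item (5) in the summary, or equivalently a dual certificate). For this, the crucial fact is that the orthogonal projector $E_j$ onto the $j$-th eigenspace, being an element of the Bose--Mesner algebra, is a linear combination $E_j = \sum_{i=0}^d c_{ji} A_i$; consequently the diagonal of $E_j$ is constant (equal to $\dim V_j / n$) and, more importantly, the entry $(E_j)_{uv}$ depends only on the distance $d(u,v)$. In particular $(E_j)_{uv}$ takes a single common value, call it $q_{j,1}$, on \emph{every} edge $(u,v)$.

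First I would set up the certificate for $\lambda_2(\ones)$. Using the SDP dual (as in Sun--Boyd--Xiao--Diaconis / Göring--Helmberg--Wappler and as the paper recalls in \S~\ref{sec:SDP}), maximizing $\lambda_2(w)$ over $w$ with $\sum w_{ij} = |E|$ has the property that $w = \ones$ is optimal as soon as there is a psd matrix $X$ supported on the $\lambda_2(\ones)$-eigenspace with $X_{uu} + X_{vv} - 2X_{uv}$ equal to a constant over all edges $(u,v)$ and with $\Tr(LX)$ matching the bound — equivalently, an edge-isometric embedding using the $\lambda_2(\ones)$-eigenvectors as coordinates. Take $X = E_1$, the projector onto the second eigenspace. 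Then $X_{uu} = X_{vv} = \dim V_1/n$ is constant, and $X_{uv} = (E_1)_{uv} = q_{1,1}$ is the \emph{same constant for every edge} because of distance-regularity; hence $X_{uu} + X_{vv} - 2X_{uv}$ is constant over edges, which is exactly the edge-isometry condition. The normalization/complementary-slackness computation ($\sum_{(i,j)\in E}(X_{ii}+X_{jj}-2X_{ij}) = \Tr(LX)\cdot$const, using $\Tr(L E_1) = \lambda_2(\ones)\dim V_1$) then certifies $\lambda_2(w) \le \lambda_2(\ones)$.

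Next I would do the mirror-image argument for $\lambda_n(\ones)$: minimizing $\lambda_n(w)$ admits an analogous embedding certificate on the $\lambda_n(\ones)$-eigenspace, and the same reasoning with $X = E_d$ (the projector onto the top eigenspace) works verbatim, since again $(E_d)_{uv}$ is constant over edges and the diagonal is constant. This gives $\lambda_n(w') \ge \lambda_n(\ones)$, and combining the two directions yields conformal rigidity. The strongly-regular case is the special case $d = 2$. Since distance-transitive $\Rightarrow$ distance-regular, this also recovers part of Proposition~\ref{prop:edge-transitive} in that range.

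The step I expect to be the main obstacle — or at least the one requiring the most care — is pinning down the exact form of the edge-isometric/SDP certificate and verifying complementary slackness cleanly, i.e. checking that $E_1$ (resp. $E_d$) really is a feasible dual solution with the right objective value, rather than merely satisfying the edge-isometry condition. This amounts to confirming that the combinatorial identities of the Bose--Mesner algebra (the intersection numbers $p^k_{ij}$ governing how $A_i A_j$ decomposes, and the fact that the row sums of each $A_i$ are the constants $k_i$) force the KKT/slackness equations to hold. A secondary point to handle carefully is the possibility that $\lambda_2(\ones)$ or $\lambda_n(\ones)$ has multiplicity greater than one, or that two distinct distance-eigenvalues of $A$ collapse to the same Laplacian eigenvalue — but because $L = kI - A$ this cannot introduce new coincidences, and the projector $E_j$ is still Bose--Mesner, so the argument is unaffected. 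Everything else — constancy of diagonals, constancy of $(E_j)_{uv}$ on edges — is immediate from the algebra structure and requires no computation.
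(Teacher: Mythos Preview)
Your approach is correct and is essentially the same as the paper's: both arguments observe that the orthogonal projector $E_j$ onto an eigenspace of $L$ lies in the Bose--Mesner algebra (the paper cites this as Lemma~\ref{lem:drg innerproducts} from Godsil, you derive it directly), so its entries depend only on $d(u,v)$, giving constant diagonal and constant value on edges; this is precisely the edge-isometric embedding of Proposition~\ref{prop:embedding certificates}, which the paper then invokes. Your anticipated ``main obstacle'' of verifying complementary slackness is in fact no obstacle at all: once $(E_j)_{uu}+(E_j)_{vv}-2(E_j)_{uv}$ is constant on edges, Proposition~\ref{prop:certificates} (equivalently Proposition~\ref{prop:embedding certificates}) makes the rescaled $E_j$ automatically a feasible dual optimum, with no further KKT bookkeeping required---the only residual check is that this edge constant is strictly positive, which the paper handles in Corollary~\ref{cor:drgs have edge-iso-embeddings} by a short connectivity argument.
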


  Note that, in particular, every strongly-regular graph is distance-regular. Strongly-regular graphs are distance-regular graphs of diameter two (where diameter is the largest distance between any pair of points).   For example, the Shrikhande graph and its complement (see Fig.~\ref{fig:conformally rigid}) are strongly-regular. 
  The proof of Theorem~\ref{thm:distanceregular} uses structural theory of distance-regular graphs (see \cite{godsil-alg-comb-book}) and the duality theory of semidefinite programs which are developed in \S~\ref{sec:SDP} and \S~\ref{sec:embeddings}.

\begin{center}
    \begin{figure}[h!]
    \begin{tikzpicture}
            \node at (-4,0) {\includegraphics[width=0.28\textwidth]{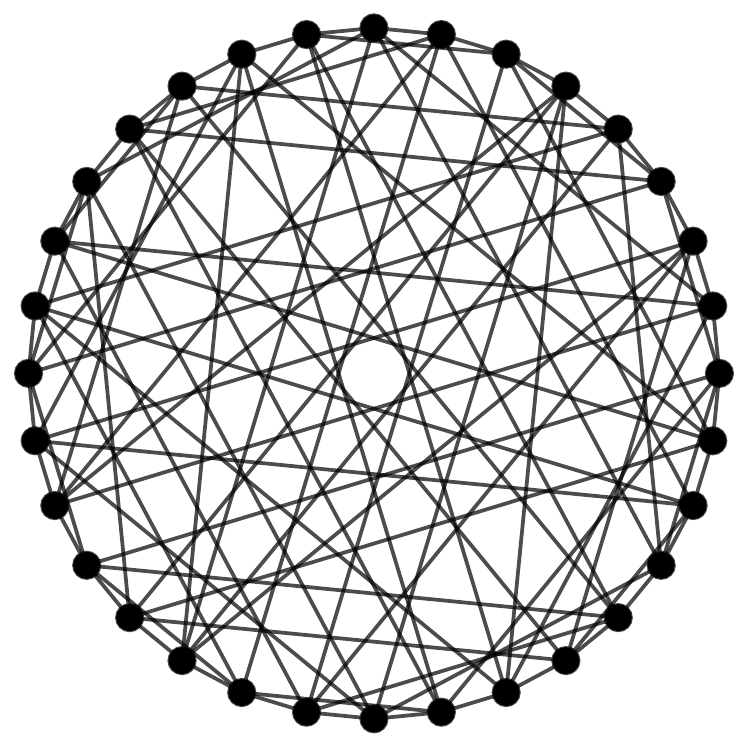}};
        \node at (0,0) {\includegraphics[width=0.28\textwidth]{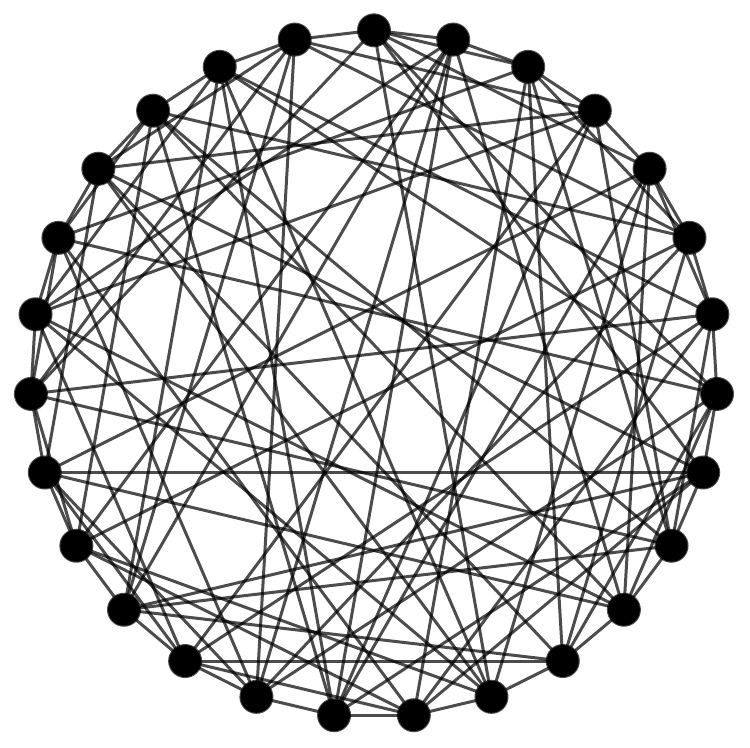}};
        \node at (4,0) {\includegraphics[width=0.28 \textwidth]{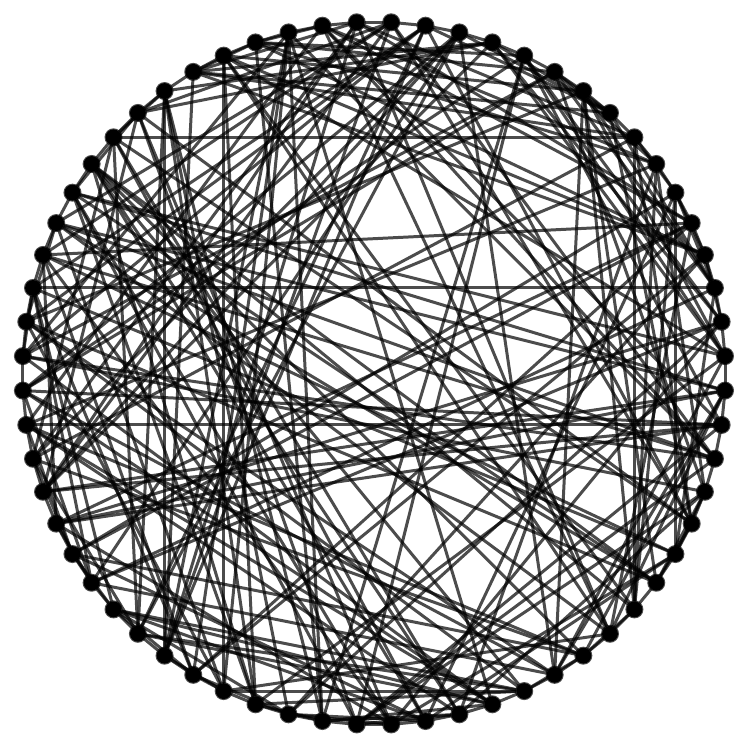}};
    \end{tikzpicture}
    \caption{Three conformally rigid graphs that are distance-regular but neither 
    strongly-regular nor edge-transitive. Left: a bipartite $(0,2)-$graph. Middle: a generalized quadrangle graph. Right: Doob graph $D(1,1)$.}
    \label{fig:disreg}
    \end{figure}
\end{center}
\vspace{-10pt}

\subsection{Cayley Graphs}
A natural class of graphs with a great deal of symmetry are Cayley graphs. The Cayley graph 
$\Gamma(G,S)$ has vertex set $V = G$ where $G$ a finite group and the edge set $ E = \left\{(g, s \circ g): g \in G, s \in S \right\}$ where $S \subset G$ is a {\em generating set}. 
Since we only deal with undirected graphs, we want the edge set to be symmetric which is achieved by requiring a symmetry in the generating set $S$, meaning $s \in S \implies s^{-1} \in S$. We always assume this. 

Cayley graphs need not be conformally rigid. The smallest example we found is shown on the left in Fig.~\ref{fig:prism}. This unweighted graph has $\lambda_2 = 2$ and $\lambda_6 =5$.
As it turns out, both of these eigenvalues can be improved by changing the edge weights: by increasing the weights on the three edges connecting the two triangles (while decreasing the weight on the other edges to maintain the normalization), we can increase $\lambda_2$ and attain $\lambda_2(w) = 18/7 > 2$. Likewise, by setting the three edge weights connecting the two triangles to 0 (and increasing the remaining weights uniformly) we can decrease the largest eigenvalue to $\lambda_6(w) = 4.5 < 5$ (and thus, in particular, can achieve values smaller than 5 for small positive weights by continuity of the eigenvalues). On the other hand, the circulant graph shown on the right of Fig.~\ref{fig:prism} is conformally rigid but not edge-transitive. Its conformal rigidity is implied by the following Theorem.

\begin{figure}[h!]
        \begin{tikzpicture}
      \node at (0,0) {  \includegraphics[width=0.25\textwidth]{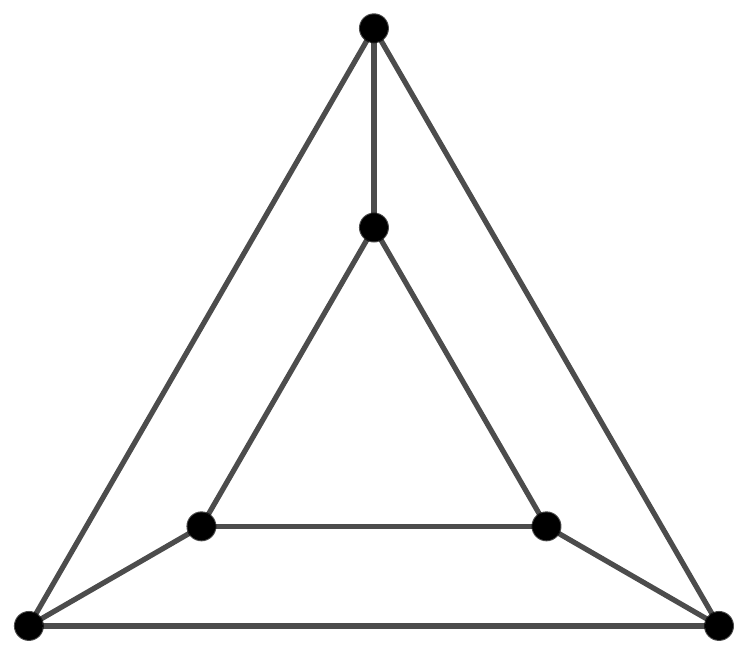}};
       \node at (6,0) {  \includegraphics[width=0.25\textwidth]{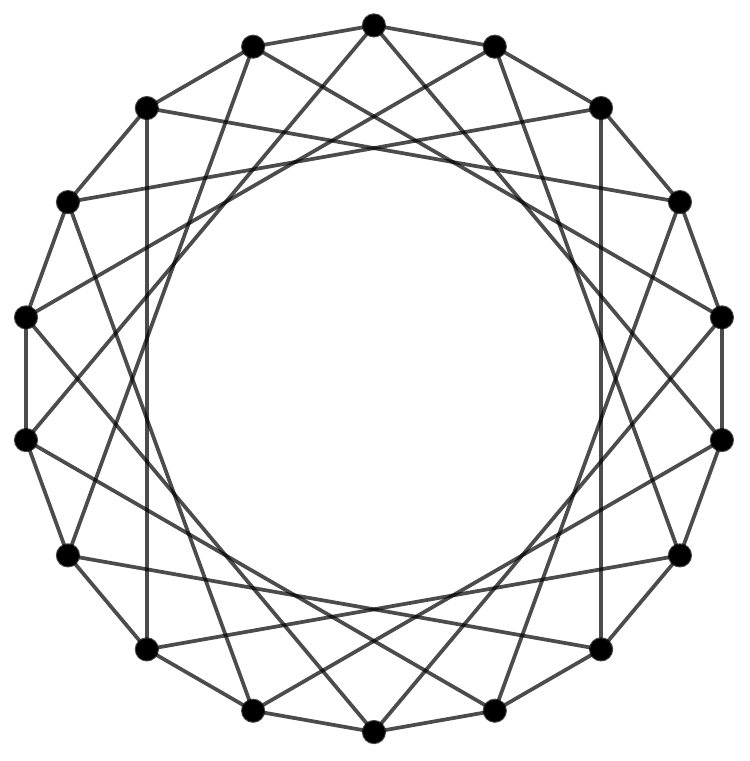}};
       \end{tikzpicture}
        \caption{Left: The triangular prism graph is  Cayley and not conformally rigid. Right: a Cayley graph on $\mathbb{Z}_{18}$ (generated by $S=\left\{-5,-1,1,5\right\}$) that is conformally rigid and \textit{not} edge-transitive.}
        \label{fig:prism}
    \end{figure}
\begin{theorem} \label{thm:Cayley}
    Let $\Gamma(G,S)$ be a finite Cayley graph. If there exist eigenvectors $\phi_2, \phi_n$ corresponding to $\lambda_2, \lambda_n$ such that for $\phi \in \left\{ \phi_2, \phi_n \right\}$ and $s \in S$
    \begin{align} \label{eq:Cayley sum property}
     \sum_{g \in G}  \phi(g) \phi(g \circ s) \qquad \mbox{is independent of}~s,
    \end{align}
    then $\Gamma(G,S)$ is conformally rigid.
\end{theorem}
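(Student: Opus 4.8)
The plan is to use the semidefinite programming / graph embedding characterization of conformal rigidity developed in \S\ref{sec:SDP} and \S\ref{sec:embeddings}: a graph is conformally rigid if it admits an edge-isometric embedding supported on the $\lambda_2$-eigenspace (certifying $\lambda_2(w)\le\lambda_2(\ones)$ for all $w$) and another edge-isometric embedding supported on the $\lambda_n$-eigenspace (certifying $\lambda_n(\ones)\le\lambda_n(w')$). So it suffices, for each $\lambda\in\{\lambda_2,\lambda_n\}$, to produce a map $\rho:G\to\RR^k$ whose coordinates lie in the $\lambda$-eigenspace of $L$ and for which $\|\rho(g)-\rho(h)\|$ takes the same value on every edge $(g,h)\in E$ (and, on the $\lambda_2$ side, that the embedding is "centered", i.e. orthogonal to $\ones$, which is automatic for nonconstant eigenvectors).

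First I would fix $\phi\in\{\phi_2,\phi_n\}$ and consider the orbit of $\phi$ under the regular action of $G$ on itself: for $a\in G$ let $\phi^{(a)}(g):=\phi(a\circ g)$. Each $\phi^{(a)}$ is again a $\lambda$-eigenvector of $L$, since the regular representation commutes with the Laplacian of a Cayley graph. I would then assemble the embedding $\rho:G\to\RR^{|G|}$ by $\rho(g):=(\phi(a\circ g))_{a\in G}$, i.e. using all translates of $\phi$ as coordinate functions; each coordinate is a $\lambda$-eigenvector, so $\rho$ is a $\lambda$-eigenspace embedding. The key computation is the squared edge length: for an edge $(g,s\circ g)$,
\begin{align*}
\|\rho(g)-\rho(s\circ g)\|^2 &= \sum_{a\in G}\bigl(\phi(a\circ g)-\phi(a\circ s\circ g)\bigr)^2\\
&= 2\sum_{a\in G}\phi(a)^2 - 2\sum_{a\in G}\phi(a\circ g)\,\phi(a\circ s\circ g),
\end{align*}
and after re-indexing $b=a\circ g$ the last sum becomes $\sum_{b\in G}\phi(b)\,\phi(b\circ g^{-1}\circ s\circ g)$. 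Here I would need the generating set to be normal, or more precisely I would rewrite this so the hypothesis \eqref{eq:Cayley sum property} applies. In fact the cleanest route is to not translate on the left but to observe that $\sum_{b\in G}\phi(b)\phi(b\circ t)$ depends only on the conjugacy-type information that \eqref{eq:Cayley sum property} pins down; under \eqref{eq:Cayley sum property} this sum is the same constant $c_\phi$ for every $t=s\in S$, hence $\|\rho(g)-\rho(s\circ g)\|^2 = 2\sum_{a}\phi(a)^2 - 2c_\phi$ is independent of the edge. That gives the edge-isometric embedding for $\lambda$, and applying this with $\lambda=\lambda_2$ and $\lambda=\lambda_n$ yields conformal rigidity by the characterization.

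The main obstacle I anticipate is the re-indexing step: $\|\rho(g)-\rho(s\circ g)\|^2$ naturally produces $\sum_b\phi(b)\phi(b\circ g^{-1}sg)$, and $g^{-1}sg$ ranges over a conjugacy class as $g$ varies, not over $S$ itself, so I must check that hypothesis \eqref{eq:Cayley sum property} (stated for $s\in S$) is genuinely what controls this. The resolution is either (i) to instead build the embedding using \emph{right} translates, $\rho(g):=(\phi(g\circ a))_{a\in G}$, so that the edge difference at $(g,s\circ g)$ produces $\sum_b \phi(b)\phi((s^{-1})\circ b)$ type sums with the \emph{fixed} generator $s$ and no conjugation appears — one should double-check which side of the Cayley convention $E=\{(g,s\circ g)\}$ forces — or (ii) to note that $S$ being a symmetric generating set and the sum in \eqref{eq:Cayley sum property} being symmetric in $g\leftrightarrow g\circ s$ means it only depends on the pair $\{e,s\}$ up to the group structure, and push the bookkeeping through. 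I would also verify the normalization/centering hypotheses of the embedding characterization: the $\lambda_2$-embedding must be orthogonal to $\ones$, which holds because each coordinate $\phi^{(a)}$ is a $\lambda_2$-eigenvector with $\lambda_2>0$ hence has zero sum; and I would confirm that an edge-isometric eigenspace embedding with nonzero edge length is exactly the SDP certificate produced in \S\ref{sec:embeddings}, so that no further optimality argument is needed beyond exhibiting $\rho$.
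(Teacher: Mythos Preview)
Your approach is correct and takes a genuinely different route from the paper. The paper argues directly with the Rayleigh quotient: assuming $\lambda_2(w)>\lambda_2(\ones)$ for some $w$, it averages $w$ over the regular $G$-action to reduce to weights depending only on the generator $s$, then observes that $\sum_{g}(\phi(g)-\phi(g\circ s))^2 = 2\|\phi\|^2 - 2\sum_g\phi(g)\phi(g\circ s)$ is the same constant for every $s\in S$ by hypothesis \eqref{eq:Cayley sum property}, so the weighted Dirichlet energy of $\phi$ coincides with its constant-weight value---a contradiction. Your argument instead builds the dual SDP certificate directly (an edge-isometric eigenspace embedding from translates of $\phi$) and invokes Proposition~\ref{prop:embedding certificates}. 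The core computation is shared; what differs is that the paper stays on the primal/Rayleigh-quotient side and is entirely self-contained, while you route through the embedding machinery of \S\ref{sec:embeddings}.

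Regarding the conjugation obstacle you flagged: the paper is slightly inconsistent---the edge set is declared as $\{(g,s\circ g)\}$, but the proof and the hypothesis \eqref{eq:Cayley sum property} consistently use $g\circ s$. With the convention of the proof, i.e.\ edges $(g,g\circ s)$, left multiplication by any $a\in G$ is a graph automorphism, so your original choice of left translates $\phi^{(a)}(g)=\phi(a\circ g)$ already gives $\lambda$-eigenvectors; substituting $b=a\circ g$ in $\sum_a\bigl(\phi(a\circ g)-\phi(a\circ g\circ s)\bigr)^2$ yields $\sum_b\bigl(\phi(b)-\phi(b\circ s)\bigr)^2$ with no conjugation, and hypothesis \eqref{eq:Cayley sum property} finishes the edge-isometry check. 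The common edge length is strictly positive because otherwise connectedness would force $\rho$ to be constant, contradicting $\phi\perp\ones$, so Proposition~\ref{prop:embedding certificates} applies as you intended.
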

\textbf{Comments.} Several comments are in order.
\begin{enumerate}
    \item The condition in \eqref{eq:Cayley sum property} is simple to check for any given eigenvector $\phi$: one has to compute $|S|$ different sums and check whether the values all coincide. This is illustrated below where we use Theorem~\ref{thm:Cayley} to prove that the circulant shown in Fig.~\ref{fig:prism} is conformally rigid. 
    \item Recall that $S$ is symmetric ($s \in S \implies -s \in S$). By  changing variables one sees that
        $\sum_{g \in \Gamma}  \phi(g) \phi(g \circ s) = \sum_{g \in \Gamma}  \phi(g) \phi(g\circ (-s)),$
       and hence it suffices to check $|S|/2$ sums in \eqref{eq:Cayley sum property}.
    \item The difficulty in applying Theorem~\ref{thm:Cayley} is that one has to find suitable eigenvectors. What we have observed in practice is that when the eigenspace corresponding to, say, $\lambda_2$, has multiplicity larger than 1, then the property will typically only be true for a single element in the space.
    \item The result is sufficient. It is not clear to us whether it is necessary. In small-scale numerical experiments (carried out exclusively in the setting of circulants) we found that every time a circulant turned out to be conformally rigid, there indeed seemed to be a suitable choice of eigenvectors $\phi_2$ and $\phi_n$ which made the result applicable. It would be interesting to understand this better. 
\end{enumerate}

\begin{example} \label{ex:C18}
    For the circulant graph on the right in Fig.~\ref{fig:prism}, 
     $V = \ZZ_{18}=\{0,1,2,\ldots, 17\}$ and $S = \{1,-1,5,-5\}$ 
    we choose
    $$\lambda_2 = 2, \,\,\,\phi_2 = (-1, -1, 0, 1, 1, 0, -1, -1, 0, 1, 1, 0, -1, -1, 0, 1, 1, 0)$$
    and
    $$\lambda_{18} = 8, \,\,\,\phi_{18} = (-1, 1, -1, 1, -1, 1, -1, 1, -1, 1, -1, 1, -1, 1, -1, 1, -1, 1).$$
    A simple computation then shows that for each $s \in \{1,-1,5,-5\}$,
    $$\sum_{i \in \ZZ_{18}}  \phi_2(i) \phi_2(i+s) = 6 \,\,\,\textup{ and } 
    \sum_{i \in \ZZ_{18}}  \phi_{18}(i) \phi_{18}(i+s) = -18.$$
    Theorem~\ref{thm:Cayley} then implies the conformal rigidity of this circulant. 
\qed
\end{example}

\subsection{Circulants}
Denote a circulant with $n$ vertices and generators $S$ by $C_n(S)$. It is typical to list only one of $s$ or $-s$ in $S$, with the assumption that the other one is included. 
The set of vertices of $C_n(S)$ 
is $V = \ZZ_n = \left\{0,1,2,\dots, n-1\right\}$ and its edges are  $(v, v+s)$, as we vary over all $v \in V$ and $s \in S$ (mod $n$). Since circulants are Cayley graphs,  Theorem~\ref{thm:Cayley} applies to them. We start with some examples.

\begin{enumerate}
    \item The cycle graph $C_n$ and the complete graph $K_n$ are circulants. These are edge-transitive and hence conformally rigid by Proposition~\ref{prop:edge-transitive}. 
    \item Paley graphs of prime order are circulants, edge-transitive and thus also conformally rigid.
    \item Cocktail Party graphs are circulants, distance-transitive and hence,
    edge-transitive, and therefore conformally rigid.
    \item The smallest circulant on two generators which is \textit{not} edge-transitive but nonetheless conformally rigid is $C_{18}(\left\{1,5\right\})$ (see Fig.~\ref{fig:prism}).
\end{enumerate}

Circulants are rich and rather nontrivial. In particular, understanding when a circulant is edge-transitive appears to be a nontrivial problem, no definite characterization appears to be known. There are infinitely many circulants that are \textit{not} conformally rigid.

\begin{proposition} \label{prop:circ}
    For all $n \geq 7$, the circulant $C_n(\left\{1,2\right\})$ is \emph{not} conformally rigid.
\end{proposition}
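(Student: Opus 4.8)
The plan is to produce, for every $n \geq 7$, an explicit weighting $w$ with $\lambda_2(w) > \lambda_2(\ones)$, which by Definition~\ref{def:conformally rigid} is enough. The graph $C_n(\{1,2\})$ has $n$ edges $(v,v+1)$ and $n$ edges $(v,v+2)$, so $|E| = 2n$. For $t \in [0,1)$ let $w_t$ assign weight $1-t$ to every ``$1$-edge'' and $1+t$ to every ``$2$-edge''; then $w_t \geq 0$, $\sum w_{ij} = 2n = |E|$, and $w_0 = \ones$. Because $w_t$ depends only on the generator, $L_{w_t}$ is circulant, hence diagonalized by the characters $v \mapsto e^{2\pi i j v/n}$, and a short computation (using $1-\cos\theta = 2\sin^2(\theta/2)$ and $\sin^2 2\alpha = 4\sin^2\alpha\,(1-\sin^2\alpha)$) gives
$$ \lambda_j(w_t) = g_j(t) := 4(1-t)\,s_j + 16(1+t)\,s_j(1-s_j), \qquad s_j := \sin^2\!\big(\tfrac{\pi j}{n}\big), \quad j = 0,\dots,n-1. $$
Each $g_j$ is affine in $t$; $g_0 \equiv 0$, $g_{n-j} = g_j$, and $g_j(t) > 0$ for $j \neq 0$ and $t \in [0,1)$, so $\lambda_2(w_t) = \min_{j \neq 0} g_j(t)$.

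I would first analyze the unweighted case $t = 0$, where $\lambda_j(\ones) = h(s_j)$ with $h(s) = 20s - 16s^2$, a downward parabola strictly increasing on $[0,\tfrac58]$ and strictly decreasing on $[\tfrac58,1]$. For $j \notin \{1, n-1\}$ we have $s_j > s_1$; if $s_j \leq \tfrac58$ then $h(s_j) > h(s_1)$ by monotonicity, while if $s_j \geq \tfrac58$ then $h(s_j) \geq h(1) = 4$. The hypothesis enters through the elementary fact that $s_1 = \sin^2(\pi/n) < \tfrac14$ exactly when $n \geq 7$; for such $n$ one has $h(s_1) < h(\tfrac14) = 4$, so in both cases $h(s_j) > h(s_1)$. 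Hence $\lambda_2(\ones) = h(s_1) = g_1(0)$, with $g_j(0) > g_1(0)$ strictly for every $j \notin \{1, n-1\}$.

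Finally I would perturb off $t = 0$. Differentiating, $g_1'(0) = -4s_1 + 16 s_1(1-s_1) = 4 s_1(3 - 4 s_1) > 0$ since $s_1 < \tfrac14 < \tfrac34$. As $g_{n-1} = g_1$ increases too, and the remaining finitely many $g_j$ are continuous with $g_j(0) > g_1(0)$, there is $\delta > 0$ such that $g_j(t) > g_1(0)$ for all $j \neq 0$ and all $t \in (0,\delta)$; for such $t$, $\lambda_2(w_t) = \min_{j\neq 0} g_j(t) > g_1(0) = \lambda_2(\ones)$, contradicting conformal rigidity. The one delicate point is identifying $\lambda_2(\ones)$ with the $j=1$ mode rather than with a mode whose $s_j$ is close to $1$ — this is exactly where $n \geq 7$, i.e. $\sin^2(\pi/n) < \tfrac14$, is needed; the rest is routine bookkeeping. (Alternatively, running the same family with $t < 0$ decreases $\lambda_n$ below $\lambda_n(\ones)$, which would also do, but the $\lambda_2$ version is shorter.)
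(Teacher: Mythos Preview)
Your proof is correct and follows essentially the same route as the paper: the same one-parameter family $w_t$ (weight $1-t$ on the $1$-edges, $1+t$ on the $2$-edges), the same identification of $\lambda_2(\ones)$ with the $j=1$ Fourier mode precisely when $n\ge 7$, and the same monotonicity of that mode in $t$. The only difference is cosmetic: you work in the variable $s_j=\sin^2(\pi j/n)$ and analyze the parabola $h(s)=20s-16s^2$, whereas the paper analyzes $f(x)=4-2\cos(2\pi x)-2\cos(4\pi x)$; your threshold $s_1<\tfrac14$ is exactly the paper's $1/n<1/6$.
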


The restriction $n \geq 7$ is necessary, $C_5(\left\{1,2\right\}) = K_5$ is conformally rigid and so is $C_6(\left\{1,2\right\})$ (also known as the octahedral graph). The list of examples in (1)-(4) contains several infinite families of conformally rigid circulants, however, all of them are edge-transitive. It is a natural question whether there are infinitely many conformally rigid circulants that are not edge-transitive. We were unable to answer this question. Numerically, they appear to be fairly frequent. As an example, the complete list of conformally rigid circulants on $n=21$ vertices with 2 generators is listed below (with graphs that are not edge transitive underlined):

\begin{figure}[h!]
\begin{center}
$$ \underline{C_{21}(\left\{1,6\right\})}, C_{21}(\left\{1,8\right\}), C_{21}(\left\{2,5\right\}), \underline{C_{21}(\left\{2,9\right\})}, \underline{C_{21}(\left\{3,4\right\})}, \underline{C_{21}(\left\{3,10\right\})}$$
$$C_{21}(\left\{4,10\right\}), \underline{C_{21}(\left\{5,9\right\})}, \underline{C_{21}(\left\{6,8\right\})}, C_{21}(\left\{10,11\right\}).$$
\end{center}
\caption{Conformally rigid circulants on $n=21$ vertices with two generators. Underlined circulants are not edge-transitive.}
\end{figure}

\textbf{Questions.}
We conclude with a number of questions.
\begin{enumerate}
    \item  Is it possible to characterize conformally rigid circulants $C_n(\left\{a,b\right\})$ in terms of $n, a, b$? While there seem to be a great many, we were unable to detect a simple pattern or even identify a single infinite family.
    \item Somewhat unrelated, but of independent interest in this context: is it possible to characterize when $C_n(\left\{a,b\right\})$ is edge-transitive in terms of $n, a, b$? Some partial results can be found in \cite{circ0, circ1, circ2, circ3, circ4, zhang2}.
    \item Theorem~\ref{thm:Cayley} provides a rigorous criterion that can be used to prove conformal rigidity of Cayley graphs and, in particular, circulants. Does the Theorem identify all conformally rigid circulants? Numerically this seems to be true 
    for small conformally rigid circulants.
    \item Are there infinitely many conformally rigid circulants $C_n(\left\{a,b\right\})$ that are \textit{not} edge-transitive? The smallest one appears to be
     $C_{18}(\left\{1,5\right\})$ described in Example~\ref{ex:C18}, but there appear to be many more after that.
     \item All these questions would be also interesting in the context of circulants with three or more generators.
\end{enumerate}

\subsection{Sporadic outliers}
\label{subsec:isolated}
At this point, we have identified a number of reasons why a graph may be conformally rigid; most of the examples can be identified as belonging to a number of different groups, these being
\begin{enumerate}
    \item edge-transitive graphs,
    \item distance-regular graphs, in particular, strongly-regular graphs, 
    \item and certain Cayley graphs, in particular, certain circulant graphs. 
\end{enumerate}

The purpose of this section is to describe some isolated sporadic examples of graphs that do not belong into any of these groups.

\begin{theorem} The following graphs are conformally rigid, not edge-transitive and not distance-regular:
    \begin{enumerate} \label{thm:exc}
        \item the Hoffman graph on 16 vertices (see Fig. \ref{fig:conformally rigid})
        \item the crossing number graph 6B on 20 vertices \cite{crossing} (see Fig. \ref{fig:conformally rigid})
         \item the Haar graph 565 on 20 vertices (see Fig. \ref{fig:Haar})
         \item the distance-2 graph of the Klein graph on 24 vertices (see Fig. \ref{fig:out})
        \item the $(7,1)-$bipartite $(0,2)-$graph on $n=48$ vertices (see Fig. \ref{fig:out})
        \item the $(7,2)-$bipartite $(0,2)-$graph on $n=48$ vertices
      \item the $(20,8)$ accordion graph \cite{accordion} on 40 vertices (see Fig. \ref{fig:out})
      \item the non-Cayley vertex-transitive graph (24,23) on $n=24$ vertices.
    \end{enumerate}
\end{theorem}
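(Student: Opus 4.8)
The plan is to dispatch each graph individually, splitting the claim into its three assertions. The two \emph{negative} assertions — not edge-transitive and not distance-regular — are purely finite combinatorial checks on an explicitly given graph: compute $\Aut(G)$, verify that the induced action on $E$ has more than one orbit, and verify directly that the would-be intersection numbers of a distance-regular graph are not well defined. These contain no analytic content and can be certified by inspection (or a short computation) for each of the eight graphs.

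The substance is \emph{conformal rigidity}, and here I would use the semidefinite programming framework of \S\ref{sec:SDP} together with the embedding characterization of \S\ref{sec:embeddings}. Maximizing $\lambda_2(w)$ over $\{w \geq 0 : \ones^\top w = |E|\}$ is an SDP, as is minimizing $\lambda_n(w)$; by weak duality it suffices, for each of the two problems, to exhibit a single dual-feasible point whose objective value equals $\lambda_2(\ones)$ (resp.\ $\lambda_n(\ones)$), which certifies that the constant weight $\ones$ is optimal. Concretely, the route I would follow is the one packaged by \S\ref{sec:embeddings}: conformal rigidity holds precisely when $G$ admits an \emph{edge-isometric} embedding built from eigenvectors of $L$ belonging to $\lambda_2(\ones)$ (all edges sent to vectors of equal length, together with the complementary-slackness condition coming from the SDP), and a second edge-isometric embedding built from eigenvectors belonging to $\lambda_n(\ones)$. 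So for each graph I would diagonalize $L$, then search inside the $\lambda_2$- and $\lambda_n$-eigenspaces for a configuration matrix realizing such an embedding, cutting the search down by block-diagonalizing the SDP using $\Aut(G)$ as in \cite{boyd-diaconis-parrilo-xiao}, and finally record the resulting certificate. The details, graph by graph, are carried out in \S\ref{sec:sdp certificates}.

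The main obstacle is producing \emph{exact} certificates rather than merely numerical ones: a solver returns an approximately optimal dual point, and turning this into a theorem requires rounding it to a rational (or algebraic) point and then verifying, in exact arithmetic, positive semidefiniteness of the rounded slack matrix, the linear feasibility constraints, and equality of the objective values. For the smaller graphs (Hoffman on $16$ vertices, CNG 6B and Haar graph $565$ on $20$ vertices) this is routine once the symmetry reduction is applied. For the larger ones — the two $48$-vertex bipartite $(0,2)$-graphs, the $40$-vertex accordion graph, and the $24$-vertex non-Cayley vertex-transitive graph $(24,23)$ — the eigenspaces can have higher multiplicity, so one must land on an exact point in the relative interior of a low-dimensional face of the feasible set, and this rounding step is delicate. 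Passing to the fixed-point subspace of the SDP under $\Aut(G)$, which shrinks the dimension dramatically, is what makes the exact verification tractable, and performing it case by case is where the real work lies.
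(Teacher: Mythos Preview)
Your proposal is correct and follows essentially the same approach as the paper: verify the two negative assertions by direct finite computation, and certify conformal rigidity via explicit SDP dual certificates (equivalently, edge-isometric embeddings on $\mathcal{E}_{\lambda_2}$ and $\mathcal{E}_{\lambda_n}$), using $\Aut(G)$-symmetry reduction and exact rounding of numerical solutions, exactly as carried out in \S\ref{sec:sdp certificates}. The paper in fact uses a small toolbox of variants for the rounding step (rationalization, projection onto eigenspaces, the $UU^\top$ ansatz) rather than a single uniform method, but this is precisely the ``delicate rounding step'' you anticipate.
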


\begin{center}
    \begin{figure}[h!]
    \begin{tikzpicture}
            \node at (-4,0) {\includegraphics[width=0.28\textwidth]{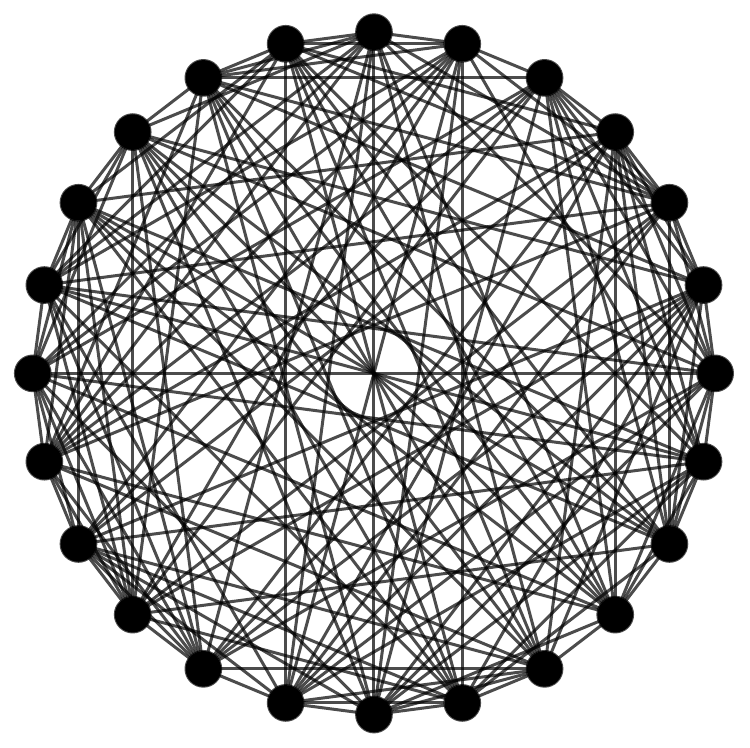}};
        \node at (0,0) {\includegraphics[width=0.28\textwidth]{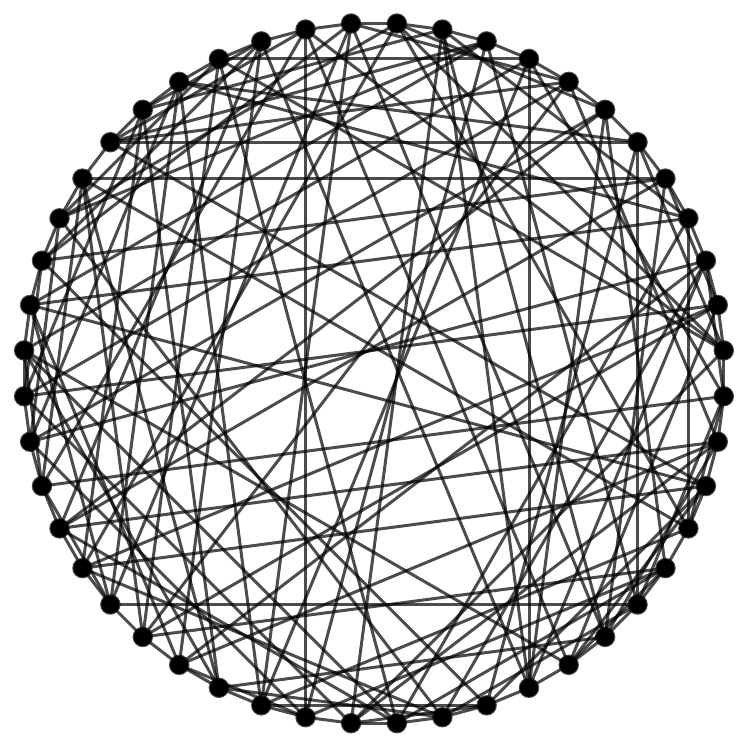}};
        \node at (4,0) {\includegraphics[width=0.28 \textwidth]{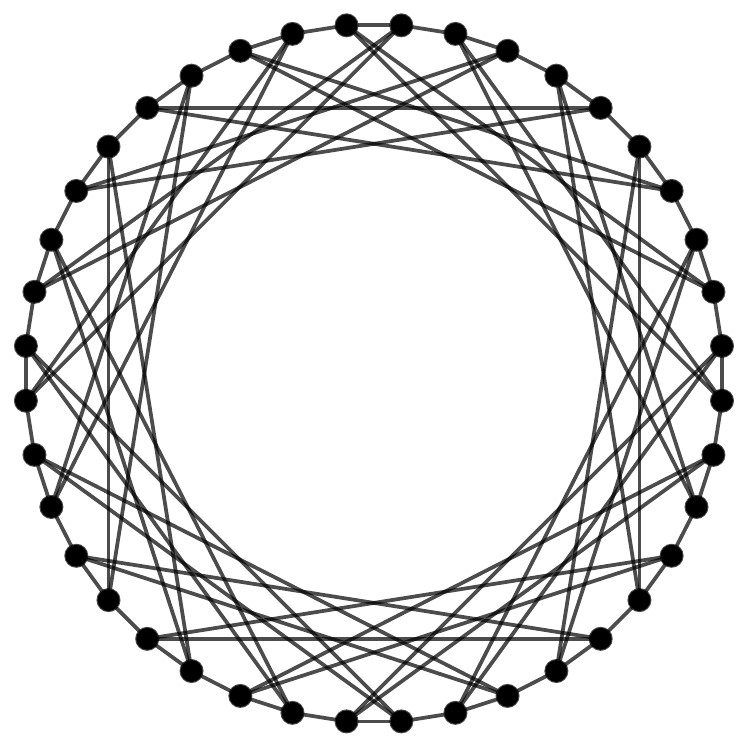}};
    \end{tikzpicture}
    \caption{Sporadic outliers: conformally rigid graphs that are neither edge-transitive nor distance-regular. Left: distance-2 graph of the Klein graph. Middle: a bipartite $(0,2)-$graph on $n=48$ vertices. Right: the $(20,8)$ accordion graph \cite{accordion}.}
    \label{fig:out}
    \end{figure}
\end{center}
\vspace{-10pt}

The graphs in this list are mostly well-known; the less well known examples (3), (5), (6), (8) are implemented in Mathematica 13.3 as ("Haar", 565), ("ZeroTwoBipartite", {7, 1}),("ZeroTwoBipartite", {7, 2}) and ({"NoncayleyTransitive", {24, 23}). The graphs in Theorem \ref{thm:exc} were all proven to be conformally rigid by identification of a suitable dual semidefinite programming certificate (see \S~\ref{sec:sdp certificates}). However, we also found one other example, the non-Cayley vertex-transitive graph (20,10) on $n=20$ vertices, that is not edge-transitive, not distance-regular, but very likely conformally rigid (beyond any reasonable numerical doubt); however, we were unable to find a certificate that would rigorously prove its conformal rigidity.

\section{Semidefinite programming certificates for conformal rigidity} \label{sec:SDP}
\subsection{Introduction.} The conformal rigidity of a graph can be checked and certified 
using {\em semidefinite programming} which is a  branch of convex optimization \cite{boyd-vandenberghe}. This connection provides both 
theoretical and computational tools as we will see in \S~\ref{sec:embeddings} and \S~\ref{sec:sdp certificates}.  We first explain the necessary background.

Assume that $G=([n],E)$ is an undirected connected graph with positive edge weights $w_{ij}$.
Let $\mathcal{S}_+^n$ be the cone of $n \times n$ real symmetric psd matrices. We use $X \succeq 0$ to denote that $X \in \mathcal{S}^n_+$. For  
$A,B \in \mathcal{S}^n_+$,  $A \succeq B$ stands for $A-B \succeq 0$.

Recall from \S~\ref{subsec:opt} that the problems of maximizing $\lambda_2(w)$ and  minimizing $\lambda_n(w)$ with the sum of edge weights being bounded, have been studied in a number of papers such as \cite{fiedler1990trees, goering-helmberg-wappler,goering-helmberg-reiss,sun-boyd-xiao-diaconis}. We follow the exposition in \cite{sun-boyd-xiao-diaconis} where one observes
\begin{align} \label{eq:prob l2}
    \lambda_2^\ast :=  \textup{ max } \left\{ \lambda_2(w) \,:\, \sum w_{ij} = |E|, \,\,
 w \geq 0 \right\}.
\end{align}
is a convex optimization problem 
that is equivalent to 
\begin{align} \label{eq:prob l2 switch}
    p^\ast:=  \textup{ min}  \left\{ \sum_{ij \in E} w_{ij} \,:\, 
 \lambda_2(w) \geq 1, \,\,  w \geq 0 \right\}.
\end{align}

Problem~\eqref{eq:prob l2} has an optimal solution since its objective function is continuous and its feasible region is compact. Therefore, \eqref{eq:prob l2 switch} also has an optimal solution. If $w^*$ is optimal for \eqref{eq:prob l2 switch}, then $\lambda_2(w^*) = 1$ and hence there is no harm in optimizing over $\lambda_2(w) \geq 1$.
The two problems are off by a scaling; $\lambda_2^* = |E|/ p^*$ and $w^*$ is optimal for \eqref{eq:prob l2 switch} if and only if $(|E| w^*)/p^*$ is optimal for \eqref{eq:prob l2}.
Using $I$ to denote the identity matrix and $J$ to denote the matrix filled with 1's, the constraint $\lambda_2(w) \geq 1$ can be modeled by $L_w \succeq I - (1/n) J$ (the eigenvalues of $I - (1/n) J$ are 
$1,1,\ldots,1,0$) and \eqref{eq:prob l2 switch} is  the semidefinite program (SDP):
\begin{align} \label{eq:prob l2 sdp}
\begin{split}
    p^\ast:= & \textup{ min}  \sum_{ij \in E} w_{ij}\\
    \textup{s.t. }  & L_w \succeq I - \frac{1}{n} J\\
    & w \geq 0.
\end{split}    
\end{align}
SDPs are optimization problems 
in the space of symmetric matrices where we are optimizing a linear function over a (convex) region described by  linear matrix inequalities, or equivalently, an affine slice of the cone of 
psd matrices, see \cite{boyd-vandenberghe}. Calling \eqref{eq:prob l2 sdp} the primal SDP, its dual SDP is derived in \cite{sun-boyd-xiao-diaconis} as
\begin{align} \label{eq:prob l2 sdp dual}
\begin{split}
    d^\ast:= & \textup{ max }  \textup{Trace } X\\
    \textup{s.t. }  & X_{ii} + X_{jj} - 2 X_{ij} \leq 1 \,\,\forall ij \in E\\
    & \ones^\top X \ones = 0, X \succeq 0.
\end{split}    
\end{align}

{\em Strong duality} holds between the primal and dual SDPs and so $p^* = d^*$. A pair of primal/dual feasible solutions $w^*$ and $X^*$ are optimal for their respective problems if and only if they satisfy the {\em complementary slackness conditions}
\begin{align}
    (1-(X_{ii}^* + X_{jj}^*-2X_{ij}^*)) w_{ij}^* = 0  \,\,\forall ij \in E \label{eq:cs 1}\\
    L_{w^*}X^* =  X^*  \label{eq:cs 2}
\end{align}

Analogous to the above, the problem of minimizing $\lambda_n(w)$ can be written as 
\begin{align} \label{eq:prob ln}
    \lambda_n^\ast :=  \textup{ min } \left\{ \lambda_n(w) \,:\, 
 \sum w_{ij} = |E|, \,\, 
    w \geq 0 \right\}.
\end{align}
which is equivalent to 
\begin{align} \label{eq:prob ln switch}
    q^\ast:=  \textup{ max} \left\{  \sum_{ij \in E} w_{ij} \,:\, 
    \lambda_n(w) \leq 1, \,\, 
 w \geq 0 \right\}.
\end{align}

As before, if $w^*$ is optimal for 
\eqref{eq:prob ln switch} then $\lambda_n(w^*) = 1$, $\lambda_n^* = |E|/q^*$ and 
$w^*$ is optimal for \eqref{eq:prob ln switch} if and only if 
$(|E| w^*)/q^*$ is optimal for \eqref{eq:prob ln}. Problem~\eqref{eq:prob ln switch} is the SDP:

\begin{align} \label{eq:prob ln sdp}
\begin{split}
    q^\ast:= & \textup{ max}  \sum_{ij \in E} w_{ij}\\
    & \textup{s.t. } L_w \preceq I - \frac{1}{n} J\\
    & w \geq 0
\end{split}    
\end{align}
whose dual is 
\begin{align} \label{eq:prob ln sdp dual}
\begin{split}
    r^\ast:= & \textup{ min }  \textup{Trace } Y\\
    & \textup{s.t. }  Y_{ii} + Y_{jj} - 2 Y_{ij} \geq 1 \,\,\forall ij \in E\\
    &  \ones^\top Y \ones = 0, \,\,Y \succeq 0.
\end{split}    
\end{align}
Again, strong duality holds so that $q^* = r^*$, and a pair of primal/dual feasible solutions $(w^*,Y^*)$ are optimal for their respective problems if and only if they satisfy complementary slackness:
\begin{align}
    (Y^*_{ii} + Y^*_{jj} - 2 Y^*_{ij} - 1)w^*_{ij} = 0 \,\,\forall ij \in E \label{eq:cs 3}\\
    L_{w^*}Y^*= Y^* \label{eq:cs 4}
\end{align}

We can now test for conformal rigidity via the above optimization problems.
\begin{observation} \label{obs:opt cert}
    A graph $G$ is conformally rigid if and only if 
\begin{enumerate}
        \item $\ones/\lambda_2(\ones)$ is an optimal solution to \eqref{eq:prob l2 switch} (or \eqref{eq:prob l2 sdp}) with  $p^* = |E|/\lambda_2(\ones)$, and
        \item $\ones/\lambda_n(\ones)$ is an optimal solution to \eqref{eq:prob ln switch} (or \eqref{eq:prob ln sdp}) with $q^* = |E|/\lambda_n(\ones)$.
\end{enumerate}
\end{observation}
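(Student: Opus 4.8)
The plan is to show that Observation~\ref{obs:opt cert} is essentially a restatement of Definition~\ref{def:conformally rigid} once we track the scaling between the two equivalent optimization formulations. First I would recall that, by the discussion following \eqref{eq:prob l2} and \eqref{eq:prob l2 switch}, the maximum of $\lambda_2(w)$ over the normalized weights, call it $\lambda_2^\ast$, equals $|E|/p^\ast$, where $p^\ast$ is the optimal value of \eqref{eq:prob l2 switch}; moreover $w^\ast$ is optimal for \eqref{eq:prob l2 switch} if and only if $(|E|/p^\ast)\, w^\ast$ is optimal for \eqref{eq:prob l2}. Analogously, $\lambda_n^\ast = |E|/q^\ast$ with $q^\ast$ the optimal value of \eqref{eq:prob ln switch}, and the optimizers correspond under the same rescaling.

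Next I would unwind what conformal rigidity says in this language. By definition, $G$ is conformally rigid precisely when $\lambda_2(\ones) = \lambda_2^\ast$ and $\lambda_n(\ones) = \lambda_n^\ast$ (the inequality $\lambda_2(\ones)\le\lambda_n(\ones)$ always holds, and the outer inequalities in Definition~\ref{def:conformally rigid} are exactly the assertions that $\ones$ attains the max of $\lambda_2$ and the min of $\lambda_n$ among normalized weights). For the $\lambda_2$ side: $\ones$ is a normalized weight, so $\lambda_2(\ones) = \lambda_2^\ast$ iff $\ones$ is an optimal solution of \eqref{eq:prob l2}; pushing this through the scaling $w \mapsto (p^\ast/|E|)\,w$ identified above, $\ones$ is optimal for \eqref{eq:prob l2} iff $(p^\ast/|E|)\,\ones$ is optimal for \eqref{eq:prob l2 switch}. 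Since $p^\ast/|E| = 1/\lambda_2^\ast = 1/\lambda_2(\ones)$ exactly when conformal rigidity holds, this is the statement that $\ones/\lambda_2(\ones)$ is optimal for \eqref{eq:prob l2 switch} with value $p^\ast = |E|/\lambda_2(\ones)$. I would run the same argument verbatim on the $\lambda_n$ side using \eqref{eq:prob ln}, \eqref{eq:prob ln switch} and the relation $q^\ast/|E| = 1/\lambda_n^\ast$, obtaining condition~(2). One minor point to check in passing is that the normalization in Definition~\ref{def:conformally rigid} allows $w \ge 0$ (edges may have zero weight) while the SDPs are set up with $w \ge 0$ as well, and that $\lambda_2(w)>0$ need not hold for all such $w$; but this causes no trouble because we only need the supremum/infimum over the closed feasible region, which is attained (as noted, \eqref{eq:prob l2} has a compact feasible set and continuous objective), and the equivalences above only use the values, not positivity.

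Since every step is a reversible chain of equivalences, both directions of the ``if and only if'' come out at once: $G$ is conformally rigid iff $\lambda_2(\ones)=\lambda_2^\ast$ and $\lambda_n(\ones)=\lambda_n^\ast$ iff (1) and (2) hold. I do not anticipate a genuine obstacle here — the content is bookkeeping of the linear rescaling between \eqref{eq:prob l2}/\eqref{eq:prob l2 switch} and between \eqref{eq:prob ln}/\eqref{eq:prob ln switch}, together with the observation that a normalized weight $w$ attains the extremal eigenvalue iff it is an optimizer of the corresponding SDP. The only thing requiring a little care is stating the optimal-value identities ($p^\ast = |E|/\lambda_2(\ones)$ and $q^\ast = |E|/\lambda_n(\ones)$) as \emph{part of} the optimality conditions rather than as separate consequences, so that the equivalence is clean in both directions; that is why the Observation phrases (1) and (2) with the explicit value of $p^\ast$ and $q^\ast$ included.
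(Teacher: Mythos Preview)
Your proposal is correct and matches the paper's approach: the paper states this as an \emph{Observation} without proof, treating it as immediate from the definitions and the scaling equivalences between \eqref{eq:prob l2}/\eqref{eq:prob l2 switch} and \eqref{eq:prob ln}/\eqref{eq:prob ln switch} already established. Your write-up simply spells out that bookkeeping explicitly, which is exactly what is needed.
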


Under strong duality, the pair of primal and dual SDPs will have the same optimal value, which allows the following re-phrasing:
\begin{observation} 
    A graph $G$ is conformally rigid if and only if 
\begin{enumerate}
      \item The SDP \eqref{eq:prob l2 sdp dual} has an optimal solution $X^*$ with  $\textup{Trace}(X^*) = |E|/\lambda_2(\ones)$, and 
      \item the SDP \eqref{eq:prob ln sdp dual} has an optimal solution $Y^*$ with 
       $\textup{Trace }(Y^*) = |E|/\lambda_n(\ones)$.
\end{enumerate}
\end{observation}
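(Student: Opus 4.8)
The plan is to deduce this restatement formally from Observation~\ref{obs:opt cert} together with SDP strong duality; essentially the only point that needs genuine verification is that the dual SDPs \eqref{eq:prob l2 sdp dual} and \eqref{eq:prob ln sdp dual} actually attain their optimal values, so that ``has an optimal solution with prescribed trace'' is the same as ``optimal value equals the prescribed number''.

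First I would establish the attainment. For any feasible $X$ of \eqref{eq:prob l2 sdp dual}, the constraint $\ones^\top X \ones = 0$ with $X \succeq 0$ forces $X\ones = 0$, so $X$ is supported on $\ones^\perp$; writing $X = \sum_k \mu_k v_k v_k^\top$ with $\mu_k \geq 0$ and $v_k \perp \ones$, and using the Rayleigh bound $\langle v, L v\rangle \geq \lambda_2(\ones)\|v\|^2$ on $\ones^\perp$ (here $L = L_\ones$), one gets
$$ |E| \;\geq\; \sum_{ij \in E}\big(X_{ii} + X_{jj} - 2 X_{ij}\big) \;=\; \sum_k \mu_k \langle v_k, L v_k\rangle \;\geq\; \lambda_2(\ones)\,\textup{Trace}(X), $$
so the feasible set of \eqref{eq:prob l2 sdp dual} is a closed subset of the compact set $\{X \succeq 0 : \textup{Trace}(X) \leq |E|/\lambda_2(\ones)\}$ and the maximum is attained. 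The parallel computation for \eqref{eq:prob ln sdp dual}, using $Y_{ii}+Y_{jj}-2Y_{ij} \geq 1$ and $\langle v, Lv\rangle \leq \lambda_n(\ones)\|v\|^2$, gives $\textup{Trace}(Y) \geq |E|/\lambda_n(\ones)$ for every feasible $Y$; since the feasible region is closed and its intersection with every trace sublevel set is compact, the minimum is attained. (Attainment on both sides is also recorded in \cite{sun-boyd-xiao-diaconis}; alternatively it follows from Slater's condition after restricting all matrices to $\ones^\perp$, on which $L_w$ can be scaled to lie strictly above, resp.\ strictly below, the identity.)

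With attainment available, the equivalence is bookkeeping. By Observation~\ref{obs:opt cert}, $G$ is conformally rigid if and only if $p^* = |E|/\lambda_2(\ones)$ in \eqref{eq:prob l2 sdp} and $q^* = |E|/\lambda_n(\ones)$ in \eqref{eq:prob ln sdp} (the weights $\ones/\lambda_2(\ones)$ and $\ones/\lambda_n(\ones)$ are always feasible for the respective problems, with objective values $|E|/\lambda_2(\ones)$ and $|E|/\lambda_n(\ones)$). By strong duality $p^* = d^*$ and $q^* = r^*$, so these conditions become $d^* = |E|/\lambda_2(\ones)$ and $r^* = |E|/\lambda_n(\ones)$, which by the attainment just proved hold exactly when \eqref{eq:prob l2 sdp dual} has an optimal $X^*$ with $\textup{Trace}(X^*) = |E|/\lambda_2(\ones)$ and \eqref{eq:prob ln sdp dual} has an optimal $Y^*$ with $\textup{Trace}(Y^*) = |E|/\lambda_n(\ones)$, which is the assertion. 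The only step carrying any subtlety is the dual attainment; everything else is the weak/strong duality already recorded in the text.
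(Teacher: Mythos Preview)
Your proof is correct and follows the same route as the paper: both deduce the Observation from Observation~\ref{obs:opt cert} together with strong duality ($p^*=d^*$, $q^*=r^*$). The paper treats the Observation simply as a ``re-phrasing'' and does not spell out the dual attainment; you go further and actually verify that \eqref{eq:prob l2 sdp dual} and \eqref{eq:prob ln sdp dual} attain their optima, via the clean trace bounds $\textup{Trace}(X)\le |E|/\lambda_2(\ones)$ and $\textup{Trace}(Y)\ge |E|/\lambda_n(\ones)$ obtained from $\textup{Trace}(LX)=\sum_{ij\in E}(X_{ii}+X_{jj}-2X_{ij})$. That extra care is a genuine (if minor) improvement over the paper's one-line justification.
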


Usually, these observations cannot be used directly to certify conformal rigidity since SDPs are solved using numerical algorithms making it difficult to know precisely what their optimal values and  solutions are. In \S~\ref{sec:sdp certificates} we illustrate several methods to overcome these difficulties. 
A more computationally friendly certificate for conformal rigidity can be obtained from the complementary slackness conditions. Indeed, 
when $G$ is conformally rigid, the optimal weights are $\ones/\lambda_2(\ones)$ and $\ones/\lambda_n(\ones)$  and so complementary slackness implies the following.

\begin{proposition} \label{prop:certificates}
A graph $G$ is conformally rigid if and only if 
there is a feasible solution $X$ for 
\eqref{eq:prob l2 sdp dual} and a feasible solution $Y$ for \eqref{eq:prob ln sdp dual} such that 
\begin{align} 
    X_{ii} + X_{jj}-2X_{ij} = 1  \quad \,\,\forall ij \in E, \,\,\,
    LX= \lambda_2(\ones) X \label{eq:cs x}\\
    Y_{ii} + Y_{jj}-2Y_{ij} = 1  \quad \,\,\forall ij \in E, \,\,\,
    LY= \lambda_n(\ones) Y. \label{eq:cs y}
\end{align}
\end{proposition}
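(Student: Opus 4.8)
The plan is to combine Observation~\ref{obs:opt cert} with strong duality and the complementary slackness conditions \eqref{eq:cs 1}--\eqref{eq:cs 2} and \eqref{eq:cs 3}--\eqref{eq:cs 4}, treating the $\lambda_2$ and $\lambda_n$ halves symmetrically. I will first handle the $\lambda_2$ side. By Observation~\ref{obs:opt cert}, $G$ is conformally rigid (on the $\lambda_2$ side) if and only if $w^* := \ones/\lambda_2(\ones)$ is optimal for the primal SDP \eqref{eq:prob l2 sdp}. Note $w^*$ is automatically primal feasible: it is nonnegative, and $L_{w^*} = L/\lambda_2(\ones)$ has smallest nonzero eigenvalue $1$ with the constant vector in its kernel, so $L_{w^*} \succeq I - \frac1n J$. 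Since the primal is strictly feasible (e.g.\ scale $\ones$ up slightly) and has an optimal solution, strong duality holds and the dual optimum is attained. Hence $w^*$ is primal optimal if and only if there exists a dual feasible $X$ for \eqref{eq:prob l2 sdp dual} satisfying the complementary slackness conditions \eqref{eq:cs 1}--\eqref{eq:cs 2} with this particular $w^*$. Because $w^*$ has all coordinates strictly positive (equal to $1/\lambda_2(\ones) > 0$), condition \eqref{eq:cs 1} forces $X_{ii} + X_{jj} - 2X_{ij} = 1$ for every edge $ij$; and condition \eqref{eq:cs 2}, $L_{w^*} X = X$, becomes $(L/\lambda_2(\ones))X = X$, i.e.\ $LX = \lambda_2(\ones) X$. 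Conversely, any $X$ that is dual feasible and satisfies these two equations makes the complementary slackness pair $(w^*, X)$ valid, certifying optimality. This is exactly \eqref{eq:cs x}.

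The $\lambda_n$ side is entirely analogous, using the second pair of SDPs. Here $w^* := \ones/\lambda_n(\ones)$ is feasible for the primal \eqref{eq:prob ln sdp}: it is nonnegative, and $L_{w^*} = L/\lambda_n(\ones)$ has largest eigenvalue $1$, so $L_{w^*} \preceq I - \frac1n J$ (both sides annihilate $\ones$, and on $\ones^\perp$ the right side is $I$ while $L_{w^*}$ has eigenvalues at most $1$). Again the primal has an optimal solution and is strictly feasible (shrink $\ones$ slightly), so strong duality holds; $w^*$ is primal optimal if and only if there is a dual feasible $Y$ for \eqref{eq:prob ln sdp dual} with $(w^*, Y)$ satisfying \eqref{eq:cs 3}--\eqref{eq:cs 4}. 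Since every coordinate of $w^*$ is positive, \eqref{eq:cs 3} becomes $Y_{ii} + Y_{jj} - 2Y_{ij} = 1$ on all edges, and \eqref{eq:cs 4} becomes $LY = \lambda_n(\ones) Y$. Combining the two sides via Observation~\ref{obs:opt cert} gives the stated equivalence: $G$ is conformally rigid iff such $X$ and $Y$ both exist.

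The only genuinely delicate points are the applicability of strong duality and attainment of the dual optimum, which I would dispatch by exhibiting Slater-type strict feasibility for each primal SDP (there is a $w > 0$ with $L_w \succ I - \frac1n J$ on $\ones^\perp$, obtained by uniformly scaling up $\ones$, and similarly a strictly feasible point for \eqref{eq:prob ln sdp} by scaling down), so that the usual conic duality theorem applies and the dual optima are attained. Everything else is bookkeeping: checking that $\ones/\lambda_2(\ones)$ and $\ones/\lambda_n(\ones)$ are the correct candidate optimal weights (which is precisely the content of Observation~\ref{obs:opt cert}), and noting that strict positivity of these weight vectors collapses the complementary slackness inequalities to the equalities \eqref{eq:cs x} and \eqref{eq:cs y}. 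I expect the write-up to be short, with the main (minor) obstacle being to state the strong-duality hypothesis cleanly enough that the ``if and only if'' is genuinely bidirectional rather than just the ``certificate implies rigidity'' direction.
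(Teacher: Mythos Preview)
Your proposal is correct and follows exactly the approach the paper takes: the paper does not give a standalone proof of this proposition but presents it as an immediate consequence of Observation~\ref{obs:opt cert} together with the complementary slackness conditions \eqref{eq:cs 1}--\eqref{eq:cs 4}, noting that the strict positivity of $\ones/\lambda_2(\ones)$ and $\ones/\lambda_n(\ones)$ turns the slackness inequalities into equalities. If anything, you are more careful than the paper, which simply asserts strong duality (citing \cite{sun-boyd-xiao-diaconis}) rather than justifying it via Slater-type conditions.
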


The condition $ LX= \lambda_2(\ones) X$
is saying that the columns of $X$ are in the eigenspace of $L$ for  $\lambda_2(\ones)$. 
Therefore, the rank of $X$ is bounded above by the multiplicity of $\lambda_2(\ones)$ as an eigenvalue of $L$. 
Analogously, the columns of $Y$ must come from the eigenspace of 
$L$ for $\lambda_n(\ones)$ and so the rank of $Y$ cannot exceed the multiplicity of $\lambda_n(\ones)$. In particular, if $\lambda_2(\ones)$ (respectively, $\lambda_n(\ones)$) has multiplicity $1$, then $X$ (respectively, $Y$) can be taken to be the rank one matrix 
\begin{align}\label{obs:rank one certificate}
p^* uu^\top \textup{} (\textup{ respectively, } q^* uu^\top) 
\end{align}
where $u$ is a normalized eigenvector of $\lambda_2(\ones)$ 
    (respectively, $\lambda_n(\ones)$).
These observations were noted in \cite{sun-boyd-xiao-diaconis}. A very short explanation why the notion of conformal rigidity simplifies when the eigenspace associated to $\lambda_2$ (or $\lambda_n$) has multiplicity 1 is as follows: in that case, we have
$$ \lambda_2(\ones) = \min_{\sum_{v \in V} f(v) = 0} \frac{\sum_{(i,j) \in E}  (f(i) - f(j))^2}{ \sum_{v \in V} f(v)^2}$$
We note that introducing weights $w_{ij}$ (normalized to $\sum_{(i,j) \in E}w_{ij} = |E|$) and then continuously varying the weights leads to a continuously perturbation of the matrix. If $\lambda_2$ has multiplicity 1, then both the eigenvalue $\lambda_2(w)$ and the associated eigenvector depend continuously on the change in the weights (and, in particular, do not change much for small changes in the weight).
Suppose now that, for the eigenvector in the unweighted case, not all terms in the sum $(f(i) - f(j))^2$ are of the same size: then we could consider a small perturbation of the weights, putting more emphasis on larger summands and less on smaller ones and arrive at a weight for which $\lambda_2(w) > \lambda_2(\ones)$ which contradicts conformal rigidity.\\


We now do a small example to illustrate the above ideas, one in which all claims can be checked by hand. 

\begin{example}
    Let $G$ be the $4$-cycle with edges $12,23,34,14$ which is edge-transitive and hence conformally rigid by Proposition~\ref{prop:edge-transitive}.
    The Laplacian of $G$ has eigenvalues: $0,2,2,4$ with eigenvectors:
    $$(1,1,1,1), \underbrace{(-1,0,1,0),(0,-1,0,1)}_{\lambda = 2},\underbrace{(-1,1,-1,1)}_{\lambda=4}.$$
    The weights 
    $$w^* = (1/2,1/2,1/2,1/2), \,\,\,\textup{ and } 
    \,\,\,w'^* = (1/4,1/4,1/4,1/4)$$
    are optimal solutions to \eqref{eq:prob l2 switch} and \eqref{eq:prob ln switch}. To verify these claims, we construct certificates $X,Y$ as in Proposition~\ref{prop:certificates}.
    Take

$$ X = \frac{1}{2}\begin{pmatrix} 
1 & 0 & -1 & 0 \\ 
0 & 1 & 0 & -1\\
-1 & 0 & 1 & 0 \\
0 & -1 & 0 & 1
\end{pmatrix}, \,\,\,\,
Y = \frac{1}{4} \begin{pmatrix}
1 & -1 & 1 & -1 \\
-1 & 1 & -1 & 1 \\
1 & -1 & 1 & -1\\
-1 & 1 & -1 & 1
\end{pmatrix}.$$
Check that $X$ is a feasible solution of \eqref{eq:prob l2 sdp dual} that satisfies
\eqref{eq:cs x}; the eigenvalues of $X$ are $0,0,1,1$ which shows that $X \succeq 0$. The analogous checks can  be made for $Y$, and $(X,Y)$ is a pair of SDP certificates for the conformal rigidity of $G$.

Note that $\textup{Trace } X = 2 = \ones^\top w^*$ and $\textup{Trace } Y = 1 = \ones^\top w'^*$. Since the multiplicity of $\lambda_2$ is $2$, $\rank(X) \leq 2$. In fact, $\rank(X) = 2$.  Since the multiplicity of $\lambda_4 = 4$ is $1$, we expect $Y$ to be a scaling of $uu^\top$ where $u=(-1,1,-1,1)$ is the eigenvector of $4$ shown above, and this is indeed the case. 
 \qed    
\end{example}

\subsection{Symmetry reduction} 
The symmetries of $G$ allow the SDPs 
\eqref{eq:prob l2 sdp} and \eqref{eq:prob ln sdp} to be reduced to have less variables, allowing them to be more easily solvable and helping to understand their structure. We briefly explain this well-known technique in our setting and use it to prove the conformal rigidity of some graphs where direct methods were not successful. 
This exposition closely follows that in \cite{boyd-diaconis-parrilo-xiao}.

Recall that a permutation $\pi \in \textup{Aut}(G)$ acts on vertices, edges and weights of $G$ via $i \mapsto \pi(i), \,\,ij \mapsto \pi(i) \pi(j), \,\, w_{ij} \mapsto w_{\pi(i)\pi(j)}$. Let $P$ denote the 
$n \times n$ permutation matrix corresponding to $\pi$ with $P_{ij} = 1$ if $\pi(j) = i$ and $0$ otherwise. If we denote the weight obtained by the action of $\pi$ on $w$ by $\pi \cdot w$, then  
$L_{\pi \cdot w} = P L_w P^\top$. 

Now consider the SDP \eqref{eq:prob l2 sdp} and its feasible region 
\begin{align} \label{eq:feasible region}
    \mathcal{F} := \{ w \in \RR^E \,:\, L_w \succeq I - \frac{1}{n} J, \,\, w \geq 0 \}. 
\end{align}
If $w \in \mathcal{F}$ and $\pi \in \textup{Aut}(G)$, then 
$\pi \cdot w \geq 0$ and 
$$ L_{\pi \cdot w} = P L_w P^\top \succeq P(I - \frac{1}{n} J)P^\top = I - \frac{1}{n} J.$$
Therefore, $\mathcal{F}$ is invariant under the action of $\textup{Aut}(G)$.

By the invariance of $\mathcal{F}$, if $w^*$ is an optimal solution of \eqref{eq:prob l2 sdp}, then $\pi \cdot w^* \in \mathcal{F}$ for all $\pi \in \textup{Aut}(G)$ and therefore, by the convexity of $\mathcal{F}$, we also have that
\begin{align} \label{eq:average}
\overline{w^\ast} := \frac{1}{|\textup{Aut}(G)|} \sum_{\pi} \pi \cdot w^\ast \in \mathcal{F}.
\end{align}
Further, $\overline{w^\ast}$ is also an optimal solution to \eqref{eq:prob l2 sdp} since $\ones^\top (\pi \cdot w^*) = 
\ones^\top w^* = p^*$. This optimal solution $\overline{w^\ast}$ is fixed under the action of $\textup{Aut}(G)$. Thus we may 
restrict the optimization in \eqref{eq:prob l2 sdp} to the fixed point subset, $\overline{\mathcal{F}} \subset \mathcal{F}$.

Let $O(i), i=1,\ldots,k$ be the orbits of edges in $E$ under the action of $\textup{Aut}(G)$. Any fixed point $w \in \overline{\mathcal{F}}$ 
must have the same value in all coordinates lying in an edge orbit 
$O(i)$. Let $w_i$ be the (variable) weight on the edges in $O(i)$. Then \eqref{eq:prob l2 sdp} is equivalent to the symmetry reduced SDP: 
\begin{align} \label{eq:prob l2 sdp symm}
\begin{split}
    p^\ast:= & \textup{ min}  \sum_{i=1}^k |O(i)| w_i\\
    \textup{s.t. }  & \sum_{i=1}^k w_i L_i \succeq I - \frac{1}{n} J\\
    & w_i \geq 0 \,\,\forall i
\end{split}    
\end{align}
where $L_i$ is the Laplacian of the graph $G_i = ([n],O(i))$.  The symmetry reduction for the SDP \eqref{eq:prob ln sdp} is analogous and yields the SDP: 
\begin{align} \label{eq:prob ln sdp symm}
\begin{split}
    q^\ast:= & \textup{ max}  \sum_{i=1}^k |O(i)| w_i\\
    \textup{s.t. }  & \sum_{i=1}^k w_i L_i \preceq I - \frac{1}{n} J\\
    & w_i \geq 0 \,\,\forall i
\end{split}    
\end{align}

The dual SDPs \eqref{eq:prob l2 sdp dual} and \eqref{eq:prob ln sdp dual} can also be symmetry reduced in a similar way.

\section{Certificates and Spectral Embeddings} \label{sec:embeddings}
It was noted in \cite{goering-helmberg-wappler} and \cite{sun-boyd-xiao-diaconis} that the dual certificates $X$ and $Y$ in Proposition~\ref{prop:certificates} provide 
embeddings of $G$. In the context of conformal rigidity, this 
interpretation can be developed further, and it will be our main
tool to prove that all distance-regular graphs are conformally rigid.

\begin{definition} \label{def:embedding}
Let $G=([n],E)$ be a connected graph and let $\lambda >0$  be an eigenvalue of its Laplacian 
$L$, of multiplicity $m$. Let $\mathcal{E}_\lambda \cong \RR^{m}$ be the eigenspace of $\lambda$ 
and $P \in \RR^{n \times k}$ a matrix whose 
columns lie in $\mathcal{E}_\lambda$.  The collection of vectors 
$\mathcal{P} = \{ 
p_i, \,\,i=1, \ldots, n\} \subset \RR^{k}$, such that $p_1^\top, \ldots p_n^\top$ are the rows of $P$,  
is called an {\em embedding} of $G$ on $\mathcal{E}_\lambda$. 
\end{definition}

Indeed, the configuration $\mathcal{P}$ provides a realization/embedding of $G$ in $\RR^{k}$ by assigning the vector $p_i$ to vertex $i$, and connecting  $p_i$ and $p_j$ by an edge for all $ij \in E$. The embedding is {\em centered} in the sense that $\sum p_i = 0$ since the columns of $P$ are 
orthogonal to $\ones$.  
Spectral embeddings as above were defined by Hall \cite{hall} and several variants have been studied. Typically, there are 
more requirements on the matrix $P$ such as the columns of $P$ should form an orthonormal basis of $\mathcal{E}_\lambda$. When $P$ is chosen to 
have this extra property, we will write $U$ in the place of $P$ and $\mathcal{U}$ in the place of $\mathcal{P}$. The embedding 
is {\em spherical} if $\|p_i\| = \alpha > 0$ for all $i=1, \ldots, n$. We will see embeddings with these additional properties for distance-regular graphs. 

\begin{definition} \label{def:edge iso embedding}
    An embedding $\mathcal{P}$ of $G$ on $\mathcal{E}_\lambda$ is said to be 
    {\em edge-isometric} if there exists a constant $c > 0$ such that 
    $\|p_i - p_j \|=c$ for all $ij \in E$. 
\end{definition}

The following is an adaptation of an observation in 
\cite{goering-helmberg-wappler} and \cite{sun-boyd-xiao-diaconis} in the context of conformal rigidity. 

\begin{proposition} \label{prop:embedding certificates}
A connected graph $G=([n],E)$ is conformally rigid if and 
only if $G$ has an edge-isometric embedding $\mathcal{P}$ on $\mathcal{E}_{\lambda_2(\ones)}$ 
and an edge-isometric embedding $\mathcal{Q}$ on $\mathcal{E}_{\lambda_n(\ones)}$.
\end{proposition}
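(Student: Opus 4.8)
The plan is to deduce this from Proposition~\ref{prop:certificates}, by translating its matrix conditions into the language of embeddings through the standard correspondence between positive semidefinite matrices and Gram matrices of vector configurations. The key identity throughout is that if $X = PP^\top$ with rows $p_1^\top,\ldots,p_n^\top$ of $P$, then $X_{ij} = \langle p_i, p_j\rangle$ and hence $X_{ii} + X_{jj} - 2X_{ij} = \|p_i - p_j\|^2$.

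First I would treat the forward direction. Assume $G$ is conformally rigid. By Proposition~\ref{prop:certificates} there is a feasible $X$ for \eqref{eq:prob l2 sdp dual} with $X_{ii} + X_{jj} - 2X_{ij} = 1$ for all $ij \in E$ and $LX = \lambda_2(\ones)X$. Diagonalize $X = \sum_i \mu_i v_i v_i^\top$ with $\mu_i > 0$; from $LX = \lambda_2(\ones)X$ and $X \succeq 0$ one checks that each $v_i$ with $\mu_i \neq 0$ satisfies $Lv_i = \lambda_2(\ones)v_i$, i.e. $v_i \in \mathcal{E}_{\lambda_2(\ones)}$. Thus $P := [\sqrt{\mu_1}\,v_1 \mid \cdots]$ has all columns in $\mathcal{E}_{\lambda_2(\ones)}$, so its rows define an embedding $\mathcal{P}$ of $G$ on $\mathcal{E}_{\lambda_2(\ones)}$ in the sense of Definition~\ref{def:embedding}, and
$$\|p_i - p_j\|^2 = X_{ii} + X_{jj} - 2X_{ij} = 1 \qquad \forall\, ij \in E,$$
so $\mathcal{P}$ is edge-isometric with $c = 1$. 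Running the same argument on the matrix $Y$ from \eqref{eq:cs y} produces an edge-isometric embedding $\mathcal{Q}$ on $\mathcal{E}_{\lambda_n(\ones)}$.

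For the converse, suppose $G$ has an edge-isometric embedding $\mathcal{P}$ on $\mathcal{E}_{\lambda_2(\ones)}$, coming from a matrix $P$ whose columns lie in $\mathcal{E}_{\lambda_2(\ones)}$, with $\|p_i - p_j\| = c > 0$ for all $ij \in E$. Set $X := c^{-2}PP^\top$. Then $X \succeq 0$ and $\col(X) \subseteq \col(P) \subseteq \mathcal{E}_{\lambda_2(\ones)}$, so $LX = \lambda_2(\ones)X$; since $\mathcal{E}_{\lambda_2(\ones)} \perp \ones$ this also gives $\ones^\top X\ones = 0$. Finally $X_{ii} + X_{jj} - 2X_{ij} = c^{-2}\|p_i - p_j\|^2 = 1$ for all $ij \in E$, so $X$ is feasible for \eqref{eq:prob l2 sdp dual} and satisfies \eqref{eq:cs x}. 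Building $Y$ from $\mathcal{Q}$ analogously and invoking Proposition~\ref{prop:certificates} shows $G$ is conformally rigid.

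The argument is essentially a dictionary, so I do not expect a genuine obstacle; the only points needing care are (i) verifying in the forward direction that the factorization really places the columns of $P$ inside $\mathcal{E}_\lambda$ — this uses $\col(X) = \col(P)$ together with the eigenvector relation $LX = \lambda X$ — and (ii) confirming that the equality constraints of Proposition~\ref{prop:certificates} are exactly what ``edge-isometric with constant $c$'' encodes after rescaling by $c^{-2}$. It is worth emphasizing that Definition~\ref{def:embedding} allows an arbitrary matrix $P$ with columns in $\mathcal{E}_\lambda$, with no orthonormality imposed, which is precisely what makes the correspondence with feasible dual solutions exact rather than only up to a change of basis.
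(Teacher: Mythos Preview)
Your proposal is correct and follows essentially the same approach as the paper: factor the psd certificates $X,Y$ from Proposition~\ref{prop:certificates} as Gram matrices to obtain edge-isometric embeddings, and conversely rescale $PP^\top$ by $c^{-2}$ to recover the certificates. The only difference is cosmetic: you justify $\col(P)\subseteq\mathcal{E}_{\lambda_2(\ones)}$ via the spectral decomposition of $X$, whereas the paper simply observes that the columns of $P$ lie in $\mathcal{E}_{\lambda_2(\ones)}$ because the columns of $X=PP^\top$ do.
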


\begin{proof}
Suppose $G$ is conformally rigid. Then there exists matrices $X$ and $Y$ satisfying the conditions of Proposition~\ref{prop:certificates}.
Since $X$ and $Y$ are psd, they admit a Gram decomposition $X=PP^\top$ and 
$Y = QQ^\top$. The columns of $P$ lie in $\mathcal{E}_{\lambda_2(\ones)}$ since the columns of $X$ do. Similarly, the columns of $Q$ lie in $\mathcal{E}_{\lambda_n(\ones)}$ as the columns of $Y$ do. Therefore, $\mathcal{P}$ and $\mathcal{Q}$, consisting of the rows of $P$ and $Q$ respectively, are embeddings of $G$ on $\mathcal{E}_{\lambda_2(\ones)}$  and $\mathcal{E}_{\lambda_n(\ones)}$ respectively. The condition 
$$1=X_{ii} + X_{jj} - 2X_{ij} = \|p_i - p_j\|^2 \,\,\,\forall ij \in E$$
ensures that $\mathcal{P}$ is edge-isometric. Similarly for $\mathcal{Q}$.

Conversely, suppose we have embeddings as in the statement of the Proposition. 
Let $\mathcal{P}$ be the edge-isometric embedding on $\mathcal{E}_{\lambda_2(\ones)}$. Suppose $\|p_i - p_j\| = c >0$ for all $ij \in E$ and the columns of $P$ lie in $\mathcal{E}_{\lambda_2(\ones)}$. Define $X := c^{-2} PP^\top \in \mathcal{S}^n_+$.
Then $\ones^\top X \ones = 0$ since the columns of $P$ are eigenvectors of 
$\lambda_2(\ones)$ and are hence orthogonal to $\ones$. Since $X$ and $P$ have the same column space, $L X = \lambda_2(\ones) X$. 
If $ij \in E$, then 
$$X_{ii} + X_{jj} - 2X_{ij} = \frac{1}{c^2} (\|p_i\|^2 + \|p_j\|^2 - 2 p_i^\top p_j ) = \frac{1}{c^2}( \|p_i - p_j \|^2) =1.$$
Therefore, $X$ satisfies the conditions in Proposition~\ref{prop:certificates}. By the same argument we get a 
certificate $Y$ needed in Proposition~\ref{prop:certificates} from the 
edge-isometric embedding of $G$ on $\mathcal{E}_{\lambda_n(\ones)}$.
Together, $X$ and $Y$ certify that $G$ is conformally rigid. 
\end{proof}

\begin{remark} \label{rem:scaling X Y}
The certificate $X = c^{-2}PP^\top$ in the  proof of Proposition~\ref{prop:embedding certificates} is an optimal solution to \eqref{eq:prob l2 sdp dual}. Therefore, $\textup{Trace } X =  c^{-2} \textup{Trace }PP^\top = |E|/\lambda_2(\ones)$ which implies that $c^2 = (\lambda_2(\ones) \sum \|p_i\|^2) /|E|$. 

Similarly, if $\mathcal{Q}=\{q_1, \ldots, q_n\}$ is the edge-isometric embedding on $\mathcal{E}_{\lambda_n(\ones)}$ that gave rise to the certificate $Y$, then $\textup{Trace } Y = (c')^{-2} \sum \|q_i\|^2 = |E|/\lambda_n(\ones)$ which implies that $c'^2 = 
(\lambda_n(\ones) \sum \|q_i\|^2) /|E|$. 
\end{remark}

Next, we  define a more ambitious type of embedding towards a characterization of when $G$ and its complement are both 
conformally rigid. 

\begin{definition}\label{def:2 distance embedding}
An embedding $\mathcal{P}$ of $G = ([n],E)$ on $\mathcal{E}_\lambda$ as in Definition~\ref{def:edge iso embedding} is {\em edge-nonedge-isometric} if there are 
two nonzero constants $\alpha, \beta > 0$ such that 
\begin{enumerate}
    \item $\|p_i - p_j \| = \alpha \,\,\,\forall ij \in E$, and  
    \item $\|p_i - p_j \| = \beta \,\,\,\forall ij \not \in E$.
\end{enumerate}
\end{definition}

Conformal rigidity of $G$ does not imply that $G$ has an  embedding on 
$\mathcal{E}_{\lambda_2}$ or $\mathcal{E}_{\lambda_n}$ that is edge-nonedge-isometric.

\begin{example} \label{ex:Hoffman}
    The Hoffman graph on $16$ vertices in Fig~\ref{fig:conformally rigid} is conformally rigid, but its complement is not. Its Laplacian eigenvalues, with multiplicities, are: $$ (\lambda, \textup{mult}(\lambda)): ( 8,1), (6,4), ( 4,6), (2,4), ( 0,1).$$
   The eigenspace $\mathcal{E}_8$ is spanned by the vector 
   $$v^\top = (-1, -1, -1, -1, -1, -1, -1, -1, 1, 1, 1, 1, 1, 1, 1, 1)$$ 
   and therefore, consider the ordered configuration
    $$\mathcal{Q} = \left\{
 -1, -1, -1, -1, -1, -1, -1, -1, 1, 1, 1, 1, 1, 1, 1, 1
\right\}.$$
Once can check that $\|q_i - q_j\| = 2$ for all $ij \in E$, while if 
$ij \not \in E$, then $\|q_i-q_j\| = 0,2$. For instance, 
$(1,9), (1,10) \in E$ and $(1,2),(1,16) \not \in E$.
Therefore, there are two 
different distances between non-adjacent vertices of $G$ in the embedding provided by $\mathcal{Q}$.  
Since the eigenspace $\mathcal{E}_{8}$ is $1$-dimensional, any 
other embedding on this eigenspace must be a scaling of the configuration $\mathcal{Q}$. The property of there being two different distances between non-adjacent vertices is invariant under scaling. 
\qed
\end{example}


In the rest of this section, assume that $G = ([n],E)$ is a regular graph 
of valency $d$. Then its complement $G^c = ([n],E^c)$ 
is a $(n-d-1)$-regular graph on $[n]$. Since the adjacency 
matrices are related as $A_{G^c} = J - A_G -I$ where $J$ is the matrix of all ones, 
we have that $L_{G^c} = (n-d)I - J + A_G$.
If  $G$ and $G^c$ are both connected, then $0$ is an eigenvalue of both $L_G$ and $L_{G^c}$, and $\lambda >0$ is an 
eigenvalue of $L_G$ if and only if $n-\lambda$ is an eigenvalue of $L_{G^c}$, and  
both eigenvalues have the same eigenspace which we will write as 
$\mathcal{E}_\lambda = \mathcal{E}^c_{n - \lambda}$.  

Conformal rigidity is not closed under graph complementation.
A simple example is the $6$-cycle $C_6$ which is conformally rigid as it is edge-transitive, but its complement is not, see Example~\ref{ex:C6 and its complement}. 
Proposition~\ref{prop:certificates} provides the following 
certification for the conformal rigidity of both $G$ and $G^c$. 

\begin{corollary} \label{cor:G and Gc cr}
Let $G= ([n],E)$ be a connected regular graph whose complement $G^c$ is also connected. Then $G$ and $G^c$ are both conformally rigid if and only if there exists $X,Y,X',Y' \in \mathcal{S}^n_+$ such that 
    \begin{align}
        X_{ii}+X_{jj}-2X_{ij}=1 \forall ij \in E, \,\,\,
        \textup{cols}(X) \subset \mathcal{E}_{\lambda_2(\ones)},\,\,\,
        \ones^\top X \ones = 0 \label{eq:x}\\
        Y_{ii}+Y_{jj}-2Y_{ij}=1 \forall ij \in E, \,\,\,
        \textup{cols}(Y) \subset \mathcal{E}_{\lambda_n(\ones)}\,\,\,\ones^\top Y \ones = 0 \label{eq:y}\\
        X'_{ii}+X'_{jj}-2X'_{ij}=1 \forall ij \not \in E, \,\,\,\textup{cols}(X') \subset  \mathcal{E}_{\lambda_n(\ones)}, \,\,\,\ones^\top X' \ones = 0 \label{eq:x'}\\
        Y'_{ii}+Y'_{jj}-2Y'_{ij}=1 \forall ij \not \in E, \,\,\,\textup{cols}(Y') \subset \mathcal{E}_{\lambda_2(\ones)}, \,\,\,\ones^\top Y' \ones = 0. \label{eq:y'}
    \end{align}
    \end{corollary}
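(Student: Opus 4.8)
The plan is to reduce the statement to an application of Proposition~\ref{prop:certificates}, applied separately to $G$ and to $G^c$, and then to use the relationship between the spectra and eigenspaces of $L_G$ and $L_{G^c}$ recorded just before the corollary. The key observation is the dictionary: since $G$ is $d$-regular with $G^c$ connected, the nonzero eigenvalues pair up as $\lambda \leftrightarrow n-\lambda$ with identical eigenspaces, $\mathcal{E}_\lambda = \mathcal{E}^c_{n-\lambda}$. In particular, the smallest nonzero eigenvalue of $L_G$ is $\lambda_2(\ones)$ with eigenspace $\mathcal{E}_{\lambda_2(\ones)}$, and this same eigenspace is the eigenspace of the \emph{largest} eigenvalue $n-\lambda_2(\ones)$ of $L_{G^c}$; symmetrically, $\mathcal{E}_{\lambda_n(\ones)}$ is the eigenspace of $\lambda_n(\ones)$ for $L_G$ and of the smallest nonzero eigenvalue $n-\lambda_n(\ones)$ for $L_{G^c}$.

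First I would invoke Proposition~\ref{prop:certificates} for $G$: conformal rigidity of $G$ is equivalent to the existence of $X, Y \in \mathcal{S}^n_+$ with $X_{ii}+X_{jj}-2X_{ij}=1$ for all $ij \in E$, columns of $X$ in $\mathcal{E}_{\lambda_2(\ones)}$ (the eigenspace condition $LX = \lambda_2(\ones)X$ is exactly this, together with $\ones^\top X\ones = 0$ coming from orthogonality to $\ones$), and analogously $Y$ with the edge equalities and columns in $\mathcal{E}_{\lambda_n(\ones)}$. These are precisely conditions \eqref{eq:x} and \eqref{eq:y}. Next I would apply Proposition~\ref{prop:certificates} to $G^c$: its conformal rigidity is equivalent to the existence of psd matrices — call them $X', Y'$ — with $X'_{ii}+X'_{jj}-2X'_{ij}=1$ for all $ij \in E^c$, columns of $X'$ in the eigenspace of $\lambda_2(\ones)(G^c)$ (the smallest nonzero eigenvalue of $L_{G^c}$), and $Y'$ with the edge equalities over $E^c$ and columns in the eigenspace of the largest eigenvalue of $L_{G^c}$. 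Using the spectral dictionary, the smallest nonzero eigenvalue of $L_{G^c}$ is $n - \lambda_n(\ones)$ with eigenspace $\mathcal{E}_{\lambda_n(\ones)}$, and the largest is $n - \lambda_2(\ones)$ with eigenspace $\mathcal{E}_{\lambda_2(\ones)}$. Substituting these identifications turns the conditions on $X'$ into exactly \eqref{eq:x'} (edge equalities over $ij \notin E$, columns in $\mathcal{E}_{\lambda_n(\ones)}$) and the conditions on $Y'$ into exactly \eqref{eq:y'} (edge equalities over $ij \notin E$, columns in $\mathcal{E}_{\lambda_2(\ones)}$).

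Putting the two equivalences together: $G$ and $G^c$ are both conformally rigid if and only if all four matrices $X, Y, X', Y'$ satisfying \eqref{eq:x}–\eqref{eq:y'} exist. The only genuine content beyond bookkeeping is checking that the eigenspace condition ``$LX = \mu X$ with $\ones^\top X\ones = 0$'' in Proposition~\ref{prop:certificates} is equivalent to ``columns of $X$ lie in $\mathcal{E}_\mu$'' — but this is immediate since $X$ psd with $L X = \mu X$ forces $\col(X) \subseteq \mathcal{E}_\mu$, and conversely columns in $\mathcal{E}_\mu$ give $LX = \mu X$ and (being orthogonal to $\ones$) $\ones^\top X\ones = 0$. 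I do not anticipate a serious obstacle; the main care needed is to correctly track which eigenspace of $L_{G^c}$ corresponds to $\lambda_2$ versus $\lambda_n$ under the $\lambda \mapsto n-\lambda$ involution — getting this backwards would swap the roles of $X'$ and $Y'$ — and to note that connectedness of both $G$ and $G^c$ is exactly what guarantees $\lambda_2(\ones) > 0$ for each, so that Proposition~\ref{prop:certificates} is applicable to both graphs.
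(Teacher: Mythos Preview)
Your proposal is correct and follows exactly the approach the paper intends: the corollary is stated without proof as an immediate consequence of Proposition~\ref{prop:certificates} applied to both $G$ and $G^c$, together with the spectral dictionary $\mathcal{E}_\lambda = \mathcal{E}^c_{n-\lambda}$ recorded in the preceding paragraph. Your tracking of which eigenspace of $L_{G^c}$ corresponds to $\lambda_2$ versus $\lambda_n$ under $\lambda \mapsto n-\lambda$ is accurate, and the equivalence between $LX = \mu X$ and $\col(X) \subset \mathcal{E}_\mu$ (with $\ones^\top X \ones = 0$ following automatically) is the only point requiring any comment, which you handle correctly.
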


It would be particularly convenient if we could take $X'=Y$ and $Y'=X$. Here is an embedding version of this wish, that follows from 
Proposition~\ref{prop:embedding certificates}.

\begin{corollary}\label{cor:edge-non-edge-iso implies G & G^c CR }
    Let $G$ be a connected regular graph such that $G^c$ is also connected. 
    If $G$ has an embedding $\mathcal{P}$ on $\mathcal{E}_{\lambda_2(\ones)}$ and an embedding $\mathcal{Q}$ on  $\mathcal{E}_{\lambda_n(\ones)}$ that are both edge-nonedge-isometric then 
    both $G$ and $G^c$ are conformally rigid. 
\end{corollary}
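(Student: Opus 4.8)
The plan is to reduce the whole statement to Proposition~\ref{prop:embedding certificates}, which characterizes conformal rigidity of a connected graph by the existence of edge-isometric embeddings on the $\lambda_2(\ones)$- and $\lambda_n(\ones)$-eigenspaces. The key observation is that an edge-nonedge-isometric embedding of $G$ does double duty: reading off the edge distance $\alpha$ makes it an edge-isometric embedding of $G$, while reading off the non-edge distance $\beta$ makes it an edge-isometric embedding of $G^c$, since the edges of $G^c$ are exactly the non-edges of $G$. So once we know which eigenspace of $G$ corresponds to which eigenspace of $G^c$, the two hypothesized embeddings $\mathcal{P}$ and $\mathcal{Q}$ supply all four certificates needed.

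Concretely, first I would recall the complementation dictionary from the paragraph preceding the Corollary: for a regular $G$ one has $L_{G^c} = nI - J + A_G$, so on $\ones^\perp$ this is $nI - L_G$, whence $\lambda$ is an eigenvalue of $L_G$ on $\ones^\perp$ if and only if $n-\lambda$ is one of $L_{G^c}$, with the same eigenspace. Because $G$ and $G^c$ are connected we have $0 < \lambda_2(\ones) \le \lambda_n(\ones) < n$, and this forces $\lambda_2(G^c) = n - \lambda_n(\ones)$ and $\lambda_n(G^c) = n - \lambda_2(\ones)$ to be the extreme positive eigenvalues of $L_{G^c}$, with $\mathcal{E}^c_{\lambda_2(G^c)} = \mathcal{E}_{\lambda_n(\ones)}$ and $\mathcal{E}^c_{\lambda_n(G^c)} = \mathcal{E}_{\lambda_2(\ones)}$. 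Now I apply Proposition~\ref{prop:embedding certificates} twice. For $G$: the embeddings $\mathcal{P}$ on $\mathcal{E}_{\lambda_2(\ones)}$ and $\mathcal{Q}$ on $\mathcal{E}_{\lambda_n(\ones)}$ are edge-nonedge-isometric, hence in particular edge-isometric, so $G$ is conformally rigid. For $G^c$: the embedding $\mathcal{Q}$ lives on $\mathcal{E}_{\lambda_n(\ones)} = \mathcal{E}^c_{\lambda_2(G^c)}$ and satisfies $\|q_i - q_j\| = \beta$ for every non-edge of $G$, i.e. every edge of $G^c$, so it is an edge-isometric embedding of $G^c$ on its $\lambda_2$-eigenspace; symmetrically $\mathcal{P}$ lives on $\mathcal{E}_{\lambda_2(\ones)} = \mathcal{E}^c_{\lambda_n(G^c)}$ and satisfies $\|p_i - p_j\| = \beta$ on all edges of $G^c$, so it is an edge-isometric embedding of $G^c$ on its $\lambda_n$-eigenspace. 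Both embeddings are automatically centered, since columns drawn from eigenspaces of positive eigenvalues are orthogonal to $\ones$. Hence Proposition~\ref{prop:embedding certificates} gives that $G^c$ is conformally rigid as well.

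I do not expect a real obstacle: the argument is pure bookkeeping on top of Proposition~\ref{prop:embedding certificates} and the complementation dictionary. The only points requiring care are (i) matching the eigenspaces in the right order, so that it is $\mathcal{Q}$ (not $\mathcal{P}$) that certifies $\lambda_2(G^c)$ and $\mathcal{P}$ that certifies $\lambda_n(G^c)$, and (ii) invoking the inequalities $\lambda_2(\ones) > 0$ and $\lambda_n(\ones) < n$ — both consequences of $G$ and $G^c$ being connected — to be sure that $n - \lambda_n(\ones)$ and $n - \lambda_2(\ones)$ really are the smallest positive and the largest eigenvalues of $L_{G^c}$.
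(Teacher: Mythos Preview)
Your proposal is correct and is precisely the argument the paper has in mind: the Corollary is stated as following from Proposition~\ref{prop:embedding certificates}, and you have carried out exactly that reduction, supplying the eigenspace-swap $\mathcal{E}^c_{\lambda_2(G^c)}=\mathcal{E}_{\lambda_n(\ones)}$, $\mathcal{E}^c_{\lambda_n(G^c)}=\mathcal{E}_{\lambda_2(\ones)}$ and the observation that an edge-nonedge-isometric embedding of $G$ is simultaneously edge-isometric for $G$ (via $\alpha$) and for $G^c$ (via $\beta$).
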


A class of graphs that have edge-nonedge-isometric embeddings on $\mathcal{E}_{\lambda_2(\ones)}$ and $\mathcal{E}_{\lambda_n(\ones)}$ are the connected strongly-regular graphs. 
If $G$ is a connected strongly-regular graph then there are only two distances among pairs of vertices in $G$; $d(i,j)=1$ if $ij \in E$ and $d(i,j)=2$ if $ij \not \in E$. Also $G$ has only two nonzero eigenvalues which are typically called $r$ and $s$. Complements of strongly regular graphs are also strongly-regular and suppose $r'=n-s$ and $s'=n-r$ are the nonzero eigenvalues of $G^c$. 
We will see in Corollary~\ref{cor:drgs have edge-iso-embeddings} that $G$ has a spherical edge-isometric embedding $\mathcal{U} = \{u_1, \ldots, u_n\}$ on $\mathcal{E}_{r}$ where $u_i^\top$ form the rows of a matrix $U$ whose columns form an orthonormal basis of $\mathcal{E}_r$. Since $\mathcal{E}_r = \mathcal{E}^c_{s'}$, 
 $\mathcal{U}$ is also a spherical embedding of $G^c$ on $\mathcal{E}^c_{s'}$ and 
by Corollary~\ref{cor:drgs have edge-iso-embeddings}, it is an edge-isometric embedding of $G^c$. Since the edges of $G^c$ are the nonedges of $G$ and vice-versa,  $\mathcal{U}$ is an edge-nonedge-isometric embedding of $G$ on $\mathcal{E}_r$ (and of $G^c$ on $\mathcal{E}^c_{s'}$). Similarly, there is an edge-nonedge-isometric spherical embedding $\mathcal{V}$ of 
$G$ on $\mathcal{E}_s = \mathcal{E}^c_{r'}$ (and $G^c$ on $\mathcal{E}^c_{r'}$). The embeddings $\mathcal{U}$ and $\mathcal{V}$ are spherical $2$-{\em designs} \cite[Corollary 6.2]{godsil-alg-comb-book}. Lemma 6.3 in \cite{godsil-alg-comb-book} says that if 
$\mathcal{U}$ is a spherical $2$-design such that $u_i^\top u_j \in \{\alpha, \beta\}$ whenever $u_i \neq u_j$, and $G$ is the graph with vertex set $\mathcal{U}$ and two vectors adjacent if their inner product is $\alpha$, then $G$ is strongly-regular. 

\begin{example} \label{ex:petersen}
    Let $G$ be the Petersen graph which is strongly-regular. It is easy to see that $G$ and $G^c$ are both conformally rigid since they are both edge-transitive. The eigenvalues of $G$ and $G^c$ with their multiplicities are: 
    $$G: (5, 4), (2, 5), (0, 1) \,\,\,\textup{ and } \,\,\,G^c: (8, 5), (5, 4), (0, 1).$$
    Take $\mathcal{U}$ to be the columns of 
    $${\small U^\top = \left(
\begin{array}{cccccccccc}
 \frac{1}{2} & -\frac{1}{2} & 0 & 0 & -\frac{1}{2} & \frac{1}{2} & 0 & 0 & 0 & 0 \\
 -\frac{1}{2 \sqrt{3}} & \frac{1}{2 \sqrt{3}} & -\frac{1}{\sqrt{3}} & 0 & -\frac{1}{2 \sqrt{3}} & \frac{1}{2 \sqrt{3}} & \frac{1}{\sqrt{3}} & 0 & 0 & 0 \\
 -\frac{1}{2 \sqrt{3}} & -\frac{1}{2 \sqrt{3}} & \frac{1}{2 \sqrt{3}} & -\frac{1}{\sqrt{3}} & 0 & 0 & \frac{1}{2 \sqrt{3}} & \frac{1}{\sqrt{3}} & 0 & 0 \\
 -\frac{1}{6} & -\frac{1}{6} & -\frac{1}{6} & \frac{1}{3} & -\frac{1}{3} & -\frac{1}{3} & -\frac{1}{6} & \frac{1}{3} & \frac{2}{3} & 0 \\
 -\frac{1}{3 \sqrt{2}} & -\frac{1}{3 \sqrt{2}} & -\frac{1}{3 \sqrt{2}} & -\frac{1}{3 \sqrt{2}} & \frac{1}{3 \sqrt{2}} & \frac{1}{3 \sqrt{2}} & -\frac{1}{3 \sqrt{2}} & -\frac{1}{3 \sqrt{2}} & \frac{1}{3 \sqrt{2}} & \frac{1}{\sqrt{2}} \\
\end{array}
\right)}.$$
Then $\mathcal{U}$ is an edge-nonedge-isometric embedding of $G$ on $\mathcal{E}_2$ and of $G^c$ on $\mathcal{E}^c_8 = \mathcal{E}_2$. Indeed, $\|u_i - u_j \| = \sqrt{2/3}$ for all $ij \in E$ and $\|u_i - u_j \| = 2/\sqrt{3}$ for all $ij \not \in E$.

Take $\mathcal{V}$ to be the columns of 
$${\small V^\top = \left(
\begin{array}{cccccccccc}
 \frac{1}{\sqrt{6}} & \frac{1}{\sqrt{6}} & 0 & -\frac{1}{\sqrt{6}} & -\frac{1}{\sqrt{6}} & -\frac{1}{\sqrt{6}} & 0 & 0 & 0 & \frac{1}{\sqrt{6}} \\
 \frac{1}{\sqrt{6}} & 0 & -\frac{1}{\sqrt{6}} & -\frac{1}{\sqrt{6}} & \frac{1}{\sqrt{6}} & 0 & 0 & 0 & \frac{1}{\sqrt{6}} & -\frac{1}{\sqrt{6}} \\
 \frac{1}{3 \sqrt{2}} & -\frac{1}{3 \sqrt{2}} & -\frac{\sqrt{2}}{3} & \frac{1}{3 \sqrt{2}} & \frac{1}{3 \sqrt{2}} & -\frac{1}{3 \sqrt{2}} & 0 & \frac{\sqrt{2}}{3} & -\frac{\sqrt{2}}{3} & \frac{1}{3 \sqrt{2}} \\
 \frac{1}{3 \sqrt{10}} & -\frac{2}{3} \sqrt{\frac{2}{5}} & \frac{1}{3 \sqrt{10}} & \frac{1}{3 \sqrt{10}} & \frac{1}{3 \sqrt{10}} & -\frac{2}{3} \sqrt{\frac{2}{5}} & \sqrt{\frac{2}{5}} & -\frac{2}{3} \sqrt{\frac{2}{5}} & \frac{1}{3 \sqrt{10}} & \frac{1}{3 \sqrt{10}} \\
\end{array}
\right)}.$$
Then $\mathcal{V}$ is an edge-nonedge-isometric embedding of $G$ on $\mathcal{E}_5$ and 
of $G^c$ on $\mathcal{E}^c_5 = \mathcal{E}_5$. In this case, 
$\|v_i - v_j \| = 2/\sqrt{3}$ for all $ij \in E$ and 
$\|v_i - v_j \| = \sqrt{2/3}$ for all $ij \not \in E$. 
The certificates are 
$$X = Y' = \frac{3}{2} UU^\top \,\,\textup{ and } \,\, Y= X' = \frac{3}{4} VV^\top.$$
The scalings $3/2$ and $3/4$ are determined from the edge lengths in the two embeddings as in Remark~\ref{rem:scaling X Y}.
The columns of $U$ (resp. $V$) form an  orthonormal basis of $\mathcal{E}_2=\mathcal{E}^c_8$ (resp. $\mathcal{E}_5=\mathcal{E}^c_5$). Since 
$\|u_i\| = 1/\sqrt{2}$  and $\|v_i\| = \sqrt{2/5}$ for all $i=1,\ldots, 10$, 
the embeddings $\mathcal{U}$ and $\mathcal{V}$ are both spherical. \qed
\end{example}

\section{Proofs}
\label{sec:proofs}

\subsection{The complete graph} \label{proofs:Kn}
We give a simple proof that the complete graph is conformally rigid.
This proof is the only proof where we show that the constant weights are the only choice of weights maximizing $\lambda_2$ and minimizing $\lambda_n$; it is structurally quite different from the other proofs in the paper and makes heavy use of the particular structure of the complete graph.
\begin{proof}[Proof of Proposition~\ref{prop:Kn}]
Consider a weighted complete graph on $n$ vertices with Laplacian $L_w$ such that 
$\sum_{ij \in E} w_{ij} = |E| = n(n-1)/2$.
If the smallest nonzero eigenvalue of $L_w$ is $n$, then all the nonzero eigenvalues of $L_w$ are $n$ since there is always one eigenvalue which is $0$ and 
$$ \mbox{tr}( L_w) = \sum_{i=1}^{n} \sum_{j \neq i} w_{ij} = n (n-1) = \sum_{i=1}^{n} \lambda_i(w) \geq (n-1) \lambda_2(w).$$
Likewise, if the largest nonzero eigenvalue is $n$, then all the nonzero eigenvalues are $n$ since there is always one eigenvalue which is $0$ and
$$ \mbox{tr}( L_w) = \sum_{i=1}^{n} \sum_{j \neq i} w_{ij} = n (n-1) = \sum_{i=1}^{n} \lambda_i(w) \leq (n-1) \lambda_n(w).$$
In either case, there are only two eigenspaces; $\textup{span}(\ones)$ with eigenvalue $0$ and $\ones^\perp$ with eigenvalue $n$. If $f:V \rightarrow \mathbb{R}$ has mean value 0 (i.e., $f \in \ones^\perp$), then
$$ L_wf = n f,$$
and if $f$ is constant, then $L_wf = 0$. 
The matrix $J \in \mathbb{R}^{n \times n}$ of all ones, 
plays the inverse role: if $f:V \rightarrow \mathbb{R}$ has mean value 0, then $J f = 0 \in \mathbb{R}^n$, and if $f$ is constant, then $J f = n f$. Therefore
$$ L_w + J = n \cdot I$$
which forces $w = \ones$.
\end{proof}

\subsection{Edge-transitive graphs} \label{proofs:edge-transitive}

\begin{proof}[Proof of Proposition~\ref{prop:edge-transitive}]
If $G$ is edge-transitive, then the SDP \eqref{eq:prob l2 sdp} is 
the one-variable symmetry reduced problem \eqref{eq:prob l2 sdp symm}:
\begin{align}
    \textup{min }\left\{ |E| \alpha \,:\,  \alpha L \succeq I - \frac{1}{n} J, \,\, \alpha  \geq 0\right\}.
\end{align}
The solution $\alpha = 1/\lambda_2(\ones)$ is optimal for this problem and hence, $w^* = \ones/\lambda_2(\ones)$ is optimal for \eqref{eq:prob l2 sdp}.
The argument for $\lambda_n(w)$ is analogous.

\end{proof}

\subsection{Distance-Regular Graphs}
This section provides some background material and then proves Theorem \ref{thm:distanceregular}. We start by recalling the basic setup.
We use $d(i,j)$ to denote the length of the shortest path between vertices $i$ and $j$ in a connected graph $G=([n],E)$. The diameter of $G$ is the largest distance between any pair of vertices, formally $\textup{diam}(G) = \max_{i,j \in [n]} d(i,j)$. 

\begin{definition} \label{def:DRG} 
A connected graph $G$ is 
called {\em distance-regular} if for any two vertices $i$ and $j$ 
$$ |\{u \in [n] \,:\,d(u,i)=p, \,\,d(u,j)=q   \}|$$
only depends on $d(i,j)$ and $p, q \in \mathbb{N}$. 
\end{definition}

Distance-regular graphs are always regular and the number of distinct distances $d(i,j)$ among the vertices equals $\textup{diam}(G)+1$ (since $d(i,i) = 0$). We also formally recall the definition of strongly-regular graphs.

\begin{definition}
    A regular graph $G=([n],E)$ that is not complete or empty is {\em strongly-regular} if any two adjacent vertices $i,j$ have $a$ common neighbors, and any two non-adjacent vertices have $b$ common neighbors.
\end{definition}

A connected strongly-regular graph is a distance-regular graph of diameter $2$. We will now quickly survey known results about distance-regular graphs and refer to \cite[Chapter 13]{godsil-alg-comb-book} for additional results and details.\\

Let $G=([n],E)$ be a distance-regular graph with Laplacian $L$, and $\lambda$ an eigenvalue of $L$ with multiplicity $m$. Consider an embedding $\mathcal{U}=\{u_1, \ldots, u_n\} \subset \RR^m$ of $G$ on $\mathcal{E}_\lambda$ as in Definition~\ref{def:embedding}, with the extra requirement that the columns of $U \in \RR^{n \times m}$ form an orthonormal basis of $\mathcal{E}_\lambda$. 
The projection matrix $UU^\top$ (from $\RR^n \rightarrow \mathcal{E}_\lambda$) is 
the {\em Gram matrix} of $\mathcal{U}$ with $ij$-entry equal to 
$u_i^\top u_j$. 
When $G$ is distance-regular, the embedding $\mathcal{U}$ is very rigid as formalized in the following Lemma.

\begin{lemma} \cite[Lemma 1.2]{godsil-alg-comb-book} \label{lem:drg innerproducts}
Let $G$ be a distance-regular graph and $\mathcal{U}$ be an embedding of $G$ on the eigenspace $\mathcal{E}_\lambda$ as described above. If $i$ and $j$ are two vertices of $G$, then the inner product $u_i^\top u_j$ is determined by $d(i,j)$. 
\end{lemma}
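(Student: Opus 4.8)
The plan is to recognize the Gram matrix of $\mathcal{U}$ as a member of the \emph{Bose--Mesner algebra} of $G$. Write $d = \operatorname{diam}(G)$ and, for $k = 0, 1, \ldots, d$, let $A_k \in \{0,1\}^{n \times n}$ be the $k$-th distance matrix of $G$, defined by $(A_k)_{ij} = 1$ if $d(i,j) = k$ and $0$ otherwise. Thus $A_0 = I$, $A_1 = A$ is the usual adjacency matrix, and $A_0 + A_1 + \cdots + A_d = J$. The point is that for any real coefficients $c_0, \ldots, c_d$, the matrix $\sum_{k=0}^d c_k A_k$ has $(i,j)$-entry equal to $c_{d(i,j)}$, so it suffices to prove that the Gram matrix $UU^\top$ of $\mathcal{U}$ lies in $\operatorname{span}_{\RR}\{A_0, \ldots, A_d\}$.

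The first key step is to use distance-regularity to show that each $A_k$ is a polynomial in $A$. For fixed vertices $i,j$, counting the neighbors $u$ of $i$ with $d(u,j) = k$ shows that $(AA_k)_{ij}$ can be nonzero only when $d(i,j) \in \{k-1, k, k+1\}$, and distance-regularity makes this count depend only on $d(i,j)$ and $k$; hence $A A_k = b_k A_{k-1} + a_k A_k + c_k A_{k+1}$ for scalars $a_k, b_k, c_k$ not depending on $i,j$, with $c_k \neq 0$ for $k < d$ (since $\operatorname{diam}(G) = d$ forces $A_{k+1} \neq 0$ and a shortest path exhibits a neighbor realizing the count). An induction on $k$ then yields $A_k = p_k(A)$ for a polynomial $p_k$ of degree exactly $k$; in particular the algebra generated by $A$ equals $\operatorname{span}_{\RR}\{A_0, \ldots, A_d\}$, the Bose--Mesner algebra. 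All of this is standard; see \cite[Chapter 13]{godsil-alg-comb-book}.

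Finally I would identify $UU^\top$ inside this algebra. Since $G$ is regular (as every distance-regular graph is), $L = \nu I - A$ where $\nu$ is the valency, so $L$ and $A$ have the same eigenspaces; in particular $\mathcal{E}_\lambda$ is the eigenspace of $A$ for the eigenvalue $\mu := \nu - \lambda$, and because the columns of $U$ form an orthonormal basis of $\mathcal{E}_\lambda$, the matrix $UU^\top$ is exactly the orthogonal projection of $\RR^n$ onto $\mathcal{E}_\lambda$. Letting $q$ be the Lagrange interpolation polynomial with $q(\mu) = 1$ and $q(\mu') = 0$ at every other eigenvalue $\mu'$ of $A$, the spectral theorem gives $UU^\top = q(A)$, which by the previous step lies in $\operatorname{span}_{\RR}\{A_0, \ldots, A_d\}$. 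Hence $UU^\top = \sum_{k=0}^d c_k A_k$ for some $c_k \in \RR$, and therefore $u_i^\top u_j = (UU^\top)_{ij} = c_{d(i,j)}$, as claimed. The only place where distance-regularity is genuinely used is the three-term recurrence in the middle step; the rest is the spectral theorem plus bookkeeping, so that middle step is the main obstacle, and in the write-up it is the natural thing to cite rather than reprove.
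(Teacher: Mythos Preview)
The paper does not supply its own proof of this lemma; it is quoted verbatim from Godsil's book and then used as a black box in the proof of Corollary~\ref{cor:drgs have edge-iso-embeddings}. So there is nothing in the paper to compare against, and the relevant question is simply whether your argument is correct.

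It is, and it is essentially the standard Bose--Mesner argument one finds in Godsil. The logic is: (i) the three-term recurrence $A A_k \in \operatorname{span}\{A_{k-1},A_k,A_{k+1}\}$ with nonzero leading coefficient, together with $A_{d+1}=0$, shows that $\operatorname{span}_{\RR}\{A_0,\dots,A_d\}$ is closed under multiplication by $A$ and coincides with the polynomial algebra in $A$; (ii) since $G$ is regular, $L$ and $A$ share eigenspaces, and $UU^\top$ is the orthogonal projector onto $\mathcal{E}_\lambda$, hence a polynomial in $A$ by the spectral theorem; (iii) therefore $UU^\top=\sum_k c_k A_k$ and $(UU^\top)_{ij}=c_{d(i,j)}$. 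Each step is sound. One small point worth making explicit in a write-up is why the algebra generated by $A$ is \emph{contained} in $\operatorname{span}\{A_0,\dots,A_d\}$ (equivalently, why $A$ has at most $d+1$ distinct eigenvalues): you get this from the three-term recurrence at $k=d$, which forces $A^{d+1}\in\operatorname{span}\{I,A,\dots,A^d\}$, but you pass over it quickly. Otherwise the argument is complete and is exactly the route the cited reference takes.
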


We can use this Lemma to show that, for distance-regular graphs, $\mathcal{U}$ is an edge-isometric embedding.

\begin{corollary} \label{cor:drgs have edge-iso-embeddings}
    If $G$ is a connected distance-regular graph and $\lambda > 0$, then $\mathcal{U}$ is an 
    edge-isometric spherical embedding of $G$ on $\mathcal{E}_\lambda$. 
\end{corollary}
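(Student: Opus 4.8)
The plan is to deduce this directly from Lemma~\ref{lem:drg innerproducts}. Fix a connected distance-regular graph $G$ and an eigenvalue $\lambda > 0$ of its Laplacian, with the embedding $\mathcal{U} = \{u_1,\dots,u_n\}$ built from an orthonormal basis of $\mathcal{E}_\lambda$ as described. By Lemma~\ref{lem:drg innerproducts}, the inner product $u_i^\top u_j$ depends only on the graph distance $d(i,j)$; write $\omega_k$ for the common value of $u_i^\top u_j$ when $d(i,j) = k$, for $k = 0,1,\dots,\textup{diam}(G)$. The norm-squared $\|u_i\|^2 = u_i^\top u_i = \omega_0$ is then the same for every vertex $i$, so $\mathcal{U}$ is spherical. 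To see that $\omega_0 > 0$ (so that the embedding is nondegenerate and $\alpha := \sqrt{\omega_0} > 0$): since $\lambda > 0$, the eigenspace $\mathcal{E}_\lambda$ is orthogonal to $\ones$, and the rows $u_i^\top$ of $U$ cannot all be zero because the columns of $U$ are nonzero (they form an orthonormal basis of a nonzero space); by vertex-transitivity of the inner-product pattern, if one $u_i$ vanished they all would, a contradiction. Hence $\omega_0 > 0$.

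For the edge-isometric property, take any edge $ij \in E$, so $d(i,j) = 1$. Then
\[
\|u_i - u_j\|^2 = \|u_i\|^2 + \|u_j\|^2 - 2 u_i^\top u_j = 2\omega_0 - 2\omega_1,
\]
which is independent of the chosen edge; call this common value $c^2$ where $c := \sqrt{2\omega_0 - 2\omega_1}$. It remains to check $c > 0$, i.e. $\omega_0 \neq \omega_1$. If instead $\omega_0 = \omega_1$, then $u_i = u_j$ for every edge $ij$, and since $G$ is connected this forces all $u_i$ to be equal to a common vector $u$; but then every column of $U$ is a constant multiple of $\ones$ restricted to coordinates, i.e. lies in $\spanset(\ones)$, contradicting that the columns span $\mathcal{E}_\lambda \perp \ones$ with $\lambda > 0$. (Alternatively: a constant embedding would make $UU^\top$ a rank-one multiple of $J$, which is not the projection onto $\mathcal{E}_\lambda$.) Therefore $c > 0$ and $\|u_i - u_j\| = c$ for all $ij \in E$, so $\mathcal{U}$ is edge-isometric.

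I do not expect a serious obstacle here; the content is entirely carried by Lemma~\ref{lem:drg innerproducts}, and the only things to be careful about are the two nondegeneracy claims — that $\|u_i\|$ is a fixed \emph{positive} number and that the edge length is \emph{positive} — both of which follow from $\lambda > 0$ forcing the columns of $U$ to be nonconstant (equivalently, $UU^\top$ is the projection onto a subspace of $\ones^\perp$ and is not zero). If one prefers to avoid case analysis, one can package both points by noting $UU^\top$ is a nonzero symmetric matrix with $\ones$ in its kernel whose entries are constant on each "distance class," and its diagonal entries $\omega_0$ must be strictly larger than the off-diagonal entry $\omega_1$ on adjacent pairs since otherwise $\Tr(UU^\top) = n\omega_0$ together with connectivity would collapse the image.
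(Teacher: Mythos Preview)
Your proof is correct and follows the same approach as the paper's: both deduce the spherical and edge-isometric properties directly from Lemma~\ref{lem:drg innerproducts} and then verify positivity of the edge length by arguing that connectivity would otherwise force all $u_i$ to coincide. Your positivity argument (the columns of $U$ lie in $\ones^\perp$, so cannot be nonzero multiples of $\ones$) is in fact slightly cleaner than the paper's, which treats the cases $\textup{mult}(\lambda)\geq 2$ and $\textup{mult}(\lambda)=1$ separately and gives a more involved direct argument in the latter case.
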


\begin{proof}
    By construction, $\mathcal{U}$ is an embedding of $G$ on 
    $\mathcal{E}_\lambda$. An immediate consequence of  Lemma~\ref{lem:drg innerproducts} is that, since $d(i,i) = 0$,  $\|u_i\|^2 = u_i^\top u_i$ is independent of $i$, and hence all $u_i$ have the same $\ell^2-$norm. Since the columns of $U$ are orthonormal, the square of its Frobenius norm is 
$$ \|U\|_F^2 = \sum_{i,j} u_{ij}^2 = m \qquad \mbox{and thus} \qquad\|u_i\| = \sqrt{\frac{m}{n}}.$$
In particular, all the points in $\left\{u_1, \dots, u_n\right\} \subset \mathbb{R}^m$ are on a sphere centered at the origin.
 It remains to show that if $(i,j) \in E$, that $\|u_i - u_j\|$ is a positive constant independent of $i,j$. If $(i,j) \in E$, then $d(i,j)=1$. Applying Lemma~\ref{lem:drg innerproducts} one more time, we see that there exists a constant $c_2$ such that
$$ (i,j) \in E \implies  u_i^\top u_j = c_2.$$
Thus, whenever $(i,j) \in E$, we have
\begin{align*}
    \|u_i - u_j\|^2 &= \|u_i\|^2 + \|u_j\|^2 - 2 u_i^\top u_j = \frac{2m}{n} - 2 c_2
\end{align*}
    which is indeed a nonnegative constant independent of $i$ and $j$. It remains to show that $2m/n - 2c_2$ is a \textit{positive} constant. We argue by contradiction. If $2m/n - 2c_2$ was $0$, then we would have $u_i = u_j$ for all $(i,j) \in E$. Since the graph is connected, this means that $u_i = u_j$ for all $1 \leq i,j \leq n$. This contradicts the fact that the columns of $U$ are an orthonormal set as soon as there are at least two columns. So, suppose $U \in \mathbb{R}^{n \times 1}$ which happens if the dimension of the eigenspace $\mathcal{E}_{\lambda}$ is 1. Appealing again to Lemma~\ref{lem:drg innerproducts}, we see that all entries of $U$ are $\pm \sqrt{1/n}$. Since the
the column $U$ is an eigenvector of the graph Laplacian and $\lambda > 0$, orthogonality to the constant vector implies that both numbers $\pm \sqrt{1/n}$ have to appear (and, in fact, have to appear the same number of times), which forces $n$ to be even. This suggests the decomposition $V = [n] = A \cup B$ where
$$ A = \left\{i \in [n]: u_i = - \frac{1}{\sqrt{n}}\right\} \qquad \mbox{and} \qquad B = \left\{i \in [n]: u_i =  \frac{1}{\sqrt{n}}\right\}.$$
Moreover, appealing once more to Lemma~\ref{lem:drg innerproducts}, if $(i,j) \in E$, then $u_i u_j = c_3$ for some constant $c_3$ that is independent of $i$ and $j$. This constant has to be either $1/n$ or $-1/n$. If it were $1/n$, then this would mean that there is not a single edge between $A$ and $B$ which would mean that the graph is not connected which is a contradiction. Therefore, $c_3 = -1/n$. This, in turn, means that all edges run between $A$ and $B$ which shows that the embedding is indeed edge-isometric.
\end{proof}

The embedding $\mathcal{U}$ is in fact a 
spherical $2$-design \cite[Corollary 6.2]{godsil-alg-comb-book}.

\begin{proof}[Proof of Theorem \ref{thm:distanceregular}]
   Proposition~\ref{prop:embedding certificates} shows that conformal rigidity is equivalent to having suitable edge-isometric embeddings of the graph on $\mathcal{E}_{\lambda_2}$ and $\mathcal{E}_{\lambda_n}$ which is guaranteed by the preceding Corollary. 
\end{proof}

We note that this entire approach is specific to distance-regular graphs: one does not in general obtain edge-isometric spherical embeddings via this approach (though, some of the ideas can be used to produce certificates at a greater level of generality, see \S \ref{sec:uut}). 
For instance, the claw graph on $4$ vertices is edge-transitive and hence conformally rigid. Its Laplacian has two nonzero eigenvalues and the 
embedding $\mathcal{U}$ constructed from an orthonormal basis of either eigenspace is edge-isometric, but not spherical.

\subsection{Cayley graphs}
\begin{proof}[Proof of Theorem \ref{thm:Cayley}]
Suppose there exists an assignment of weights such that $\lambda_2(w) > \lambda_2(\ones)$. This means that for every function $f:G \rightarrow \mathbb{R}$ with mean value 0 and normalized in $\ell^2(\Gamma)$, we have, for non-constant weights $w_{g,s}$ on the edge  connecting $g$ and $g \circ s$,
$$ \sum_{s \in S} \sum_{g \in G} w_{g,s} (f(g) - f(g \circ s))^2 \geq \lambda_2(w) > \lambda_2(\ones).$$
We use $\phi$ to denote an eigenvector corresponding to $\lambda_2(\ones)$ when $w_{g,s} = 1$ (and we also assume it to be normalized in $\ell^2$).
We introduce
$$ \phi_a(v) = \phi(a \circ v).$$
Then, from the above inequality we have 
$$ \sum_{a \in G} \sum_{s \in S} \sum_{g \in G} w_{g,s} (\phi_a(g) - \phi_a(g \circ s))^2 \geq (\# G) \cdot \lambda_2(w).$$
We can rewrite the left hand side as
\begin{align*}
 \sum_{a \in G} \sum_{s \in S} \sum_{g \in G} w_{g,s} (\phi_a(g) - \phi_a(g \circ s))^2 &=   \sum_{s \in S} \sum_{g \in G}  \sum_{a \in G} w_{g,s} (\phi_a(g) - \phi_a(g \circ s))^2\\
 &=    \sum_{s \in S} \sum_{g \in G}  \sum_{a \in G} w_{a^{-1} g,s} (\phi(g) - \phi(g \circ s))^2 \\
 &=  \sum_{s \in S} \sum_{g \in G}  \left( \sum_{a \in G} w_{a^{-1} g,s}\right) (\phi(g) - \phi(g \circ s))^2.
\end{align*}
This means that we can replace the weight $w_{g, s}$ by the average in the entire orbit
$$ w_{g, s} \rightarrow \frac{1}{\# G} \sum_{a \in G} w_{a g,s}$$
and still obtain a function with the desired property of exceeding $\lambda_2(w) > \lambda_2(\ones)$. Thus we may assume that $w_{g,s}$ only depends on $s$, and 
$$ \sum_{s \in S} w_{s} \sum_{g \in G}  (\phi(g) - \phi(g \circ s))^2 \geq \lambda_2(w) > \lambda_2(\ones).$$
Squaring out, we see that (invoking the $\ell^2-$normalization of $\phi$)
\begin{align*}
    \sum_{g \in G}  (\phi(g) - \phi(g \circ s))^2 &= \sum_{g \in G}  \phi(g)^2 + \phi(g \circ s)^2 - 2 \phi(g)  \phi(g \circ s) \\
    &= 2 - 2 \sum_{g \in G}\phi(g)  \phi(g \circ s).
\end{align*}
Now suppose this last sum is independent of $s$, i.e., for some $c>0$ and all $s \in S$
$$ c =   \sum_{g \in G}  (\phi(g) - \phi(g \circ s))^2.$$
Therefore
$$ \sum_{s \in S} w_{s} \sum_{g \in G}  (\phi(g) - \phi(g \circ s))^2 =  c\sum_{s \in S} w_{s}.$$
A sum of finitely many real numbers does not change if we replace each summand by the global average and thus
\begin{align*}
    \sum_{s \in S} w_{s} \sum_{g \in G}  (\phi(g) - \phi(g \circ s))^2 &=  c\sum_{s \in S} w_{s} =  c\sum_{s \in S} \left( \frac{1}{|S|}\sum_{t \in S} w_{t}\right)    \\
   &= \sum_{s \in S} \left( \frac{1}{|S|}\sum_{t \in S} w_{t}\right)\sum_{g \in G}  (\phi(g) - \phi(g \circ s))^2\\
   &= \sum_{s \in S} \sum_{g \in G}  \left( \frac{1}{|S|}\sum_{t \in S} w_{t}\right) (\phi(g) - \phi(g \circ s))^2.
\end{align*}
However, at this point the weights are independent of $s$ and thus constant. This is a contradiction to $\lambda_2(w) > \lambda_2(\ones)$. The argument for $\lambda_n$ is the same.
\end{proof}

\subsection{Proof of Proposition~\ref{prop:circ}}
The proof makes use of the fact that eigenvalues and eigenvectors of circulant graphs are well understood. In particular, in the case when all the weights are equal, 
$$ \lambda_2( C_n\left(\left\{1,2\right\}\right)) = 4 - 2 \cos\left( \frac{2\pi}{n} \right) - 2 \cos\left(\frac{4 \pi}{n} \right).$$
One way of seeing this is to realize that the graph Laplacian can be written as a circulant matrix
$$ L = \begin{pmatrix}
    c_0 &c_{n-1} & c_{n-2} & \dots & c_1 \\
    c_1 & c_0 & c_{n-1} & \dots & c_2 \\
   c_2 & c_1 &c_0 & \dots & c_3 \\
    \vdots & \vdots & \vdots & \ddots & \vdots \\
    c_{n-1} & c_{n-2} & c_{n-3} & \dots & c_0 \\
  \end{pmatrix}$$
where $c_0 = 4$ and $c_1 = c_2 = c_{n-1} = c_n = -1$ while all other $c_i$ are 0. It is known that the eigenvalues of such a circulant matrix are given by
$$ \lambda_j = c_0 + c_1 \omega^j + c_2 \omega^{2j} + \dots + c_{n-1} \omega^{(n-1)j} \qquad \mbox{where}~\omega = \exp\left(\frac{2\pi i}{n} \right),$$
where $j$ runs from $0 \leq j \leq n-1$.
In our case, this expression simplifies to
\begin{align*}
    \lambda_j &= 4 - \omega^j - \omega^{2j} - \omega^{(n-1)j} - \omega^{(n-2)j} \\
    &= 4 - \cos\left(\frac{2\pi j}{n} \right) - \cos\left(\frac{4\pi j}{n} \right)  - \cos\left(\frac{2\pi(n-1) j}{n} \right)  -\cos\left(\frac{2\pi (n-2) j}{n} \right) \\
    &= 4 - 2\cos\left(\frac{2\pi j}{n} \right) - 2\cos\left(\frac{4\pi j}{n} \right).
\end{align*}
The case $j=0$ corresponds to the eigenvalue $\lambda_1=0$. To make the cosine as large as possible, we need the arguments in the cosine to be as close as possible to a multiple of $2\pi$ and this leads to $j=1$ and $j=n-1$. We will now investigate a different choice of weights on the circulant that maintains the circulant structure. For a $\varepsilon>0$ to be thought of as very small, we set $c_0 = 4$ as well as
$$ c_1 = c_{n-1} = -1 + \varepsilon \qquad \mbox{and} \qquad c_2 = c_{n-2} = -1-\varepsilon.$$
Then
\begin{align}\label{eq:lambdaj}
\lambda_j = 4 - (2-\varepsilon)\cos\left(\frac{2\pi j}{n} \right) - (2+\varepsilon)\cos\left(\frac{4\pi j}{n} \right).
\end{align}
Our goal will be to show that for a small value of $\varepsilon$ we can ensure that $\lambda_2$ increases. However, it is clear that some restriction on $n$ will be required.
When $n=5$, then  $C_5\left(\left\{1,2\right\}\right) = K_5$ is conformally rigid. Moreover, $C_6\left(\left\{1,2\right\}\right)$ is the octahedral graph which is conformally rigid (it is edge-transitive). In both of these cases, the procedure cannot work and it is easy to see from the spectrum why: $K_5$ has eigenvalues $(0, 5, 5, 5, 5)$ and a perturbation induced by $\varepsilon$ affects all eigenvalues and some will increase while others decrease. Something similar is happening when $n=6$, the octahedral graph has spectrum $(0,4,4,4,6,6)$, in particular, the multiplicity of the second eigenvalue is 3, this allows for it to be conformally rigid.
\begin{lemma}
    For every $n \geq 7$ there exists $\varepsilon_0 > 0$ so that for all $\varepsilon \in (0, \varepsilon_0)$, the multiplicity of $\lambda_2$ is 2.
\end{lemma}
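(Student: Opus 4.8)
The plan is to work directly with the explicit eigenvalues. For $j=0,1,\dots,n-1$ the perturbed circulant Laplacian has eigenvalue $\mu_j(\varepsilon) := 4 - (2-\varepsilon)\cos(2\pi j/n) - (2+\varepsilon)\cos(4\pi j/n)$ from \eqref{eq:lambdaj}, which I would write as $\mu_j(\varepsilon) = h(j) + \varepsilon d_j$ with $h(j):=4-2\cos(2\pi j/n)-2\cos(4\pi j/n)$ the unweighted eigenvalue and $d_j := \cos(2\pi j/n)-\cos(4\pi j/n)$. Note $\mu_0(\varepsilon)\equiv 0$ and $\mu_j(\varepsilon)=\mu_{n-j}(\varepsilon)$ for all $j$, so the pair $\{1,n-1\}$ — consisting of two distinct indices since $n\ge 7$ — always forces $\mu_1=\mu_{n-1}$ to have multiplicity at least $2$. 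It therefore suffices to prove that for all sufficiently small $\varepsilon>0$ one has $\mu_j(\varepsilon) > \mu_1(\varepsilon)$ for every $j$ with $2\le j\le n-2$; since the weighted graph is connected when $\varepsilon<1$ (all weights $1\mp\varepsilon$ are positive), $0$ is then a simple eigenvalue and $\mu_1=\mu_{n-1}$ is the Fiedler value $\lambda_2$ of $L_w$, of multiplicity exactly $2$.

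First I would settle the unweighted case $\varepsilon=0$. Using $\cos(4\pi j/n)=2\cos^2(2\pi j/n)-1$ and setting $c:=\cos(2\pi j/n)$, one finds $h(j)=6-2c-4c^2=\tfrac{25}{4}-4\bigl(c+\tfrac14\bigr)^2$, a concave quadratic in $c$. Comparing $h(1)$ with $h(j)$ for $2\le j\le n-2$ thus reduces to the claim $\bigl(\cos(2\pi/n)+\tfrac14\bigr)^2 > \bigl(\cos(2\pi j/n)+\tfrac14\bigr)^2$, i.e. to the two inequalities $-\cos(2\pi/n)-\tfrac12 < \cos(2\pi j/n) < \cos(2\pi/n)$. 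The right inequality is immediate: $\min(j,n-j)\ge 2$ and cosine is strictly decreasing on $[0,\pi]$. The left inequality is exactly where the hypothesis $n\ge 7$ is used: for $n\ge 7$ we have $\cos(2\pi/n) > \cos(2\pi/6)=\tfrac12$, whence $-\cos(2\pi/n)-\tfrac12 < -1 \le \cos(2\pi j/n)$. This gives $h(j) > h(1)$ strictly for all $2\le j\le n-2$, so at $\varepsilon=0$ the minimum of $h$ over $j\ge 1$ is attained exactly on $\{1,n-1\}$. (For $n=5,6$ this fails — there the relevant cosine is $\le\tfrac12$ — which is consistent with the spectra of $K_5$ and the octahedron recalled before the lemma.)

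The perturbation step is then routine continuity over a finite index set. Put $\delta:=\min_{2\le j\le n-2}\bigl(h(j)-h(1)\bigr)>0$, a minimum over a nonempty finite set of positive reals since $n\ge 7$, and note $|d_j|\le 2$ for every $j$. Then for $2\le j\le n-2$,
\[
\mu_j(\varepsilon)-\mu_1(\varepsilon) = \bigl(h(j)-h(1)\bigr) + \varepsilon(d_j-d_1) \ \ge\ \delta - 4\varepsilon,
\]
which is positive as soon as $\varepsilon < \delta/4$. Taking $\varepsilon_0:=\min\{1,\delta/4\}$ completes the proof: for $\varepsilon\in(0,\varepsilon_0)$ the weighted graph is connected, so $0$ is simple, and among the nonzero eigenvalues $\mu_1(\varepsilon)=\mu_{n-1}(\varepsilon)$ is strictly below every $\mu_j(\varepsilon)$ with $2\le j\le n-2$; hence $\lambda_2$ has multiplicity exactly $2$.

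The only genuine obstacle is the strict separation $h(j)>h(1)$ for $2\le j\le n-2$ in the unweighted case — concretely the left-hand inequality $\cos(2\pi j/n) > -\cos(2\pi/n)-\tfrac12$, which is precisely the point at which $n\ge 7$ (equivalently $\cos(2\pi/n)>\tfrac12$) is indispensable. Everything after that is bookkeeping with the linear-in-$\varepsilon$ perturbation and a compactness/finiteness argument.
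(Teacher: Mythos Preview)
Your proof is correct and follows the same overall architecture as the paper: establish at $\varepsilon=0$ that the minimum of the nonzero eigenvalues is attained exactly at $j\in\{1,n-1\}$, then perturb. The one substantive difference is in how you handle the unweighted step. The paper studies the function $f(x)=4-2\cos(2\pi x)-2\cos(4\pi x)$ on $[0,\tfrac12]$, noting its monotonicity on $(0,\tfrac16)$ and the coincidence $f(\tfrac16)=f(\tfrac12)=4$, so that for $n\ge 7$ the value $f(1/n)$ is the unique minimum over $j/n$. You instead use the double-angle identity to rewrite $h(j)$ as a concave quadratic in $c=\cos(2\pi j/n)$, reducing the comparison $h(j)>h(1)$ to the two explicit inequalities $-\cos(2\pi/n)-\tfrac12<\cos(2\pi j/n)<\cos(2\pi/n)$; this makes the role of the hypothesis $n\ge 7$ (equivalently $\cos(2\pi/n)>\tfrac12$) completely transparent and avoids any appeal to the shape of $f$ on $[\tfrac16,\tfrac12]$. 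Your perturbation step is also made quantitative ($\varepsilon_0=\min\{1,\delta/4\}$) rather than left as an appeal to continuity. Both arguments are short; yours is a bit more self-contained.
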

\begin{proof}
    It suffices to prove the result when $\varepsilon=0$ and then appeal to continuity of the expression in $\varepsilon$. Note first that the expression \eqref{eq:lambdaj} is the same for $j$ and $-j$ and the expression is $n-$periodic and thus it is the same for $j$ and $n-j$, so it suffices to understand the behavior when $j \leq n/2$. We introduce the function
    $$ f(x) = 4 - 2 \cos\left(2 \pi x\right) - 2 \cos\left(4 \pi x\right)$$
    and it suffices to understand this function for $0 \leq x \leq 1/2$ (see Fig. \ref{fig:cosine}).
    A quick computation shows that
    $$ f\left(\frac{1}{6}\right) = f\left(\frac{1}{2}\right) =4.$$
    This appearance of $1/6$ corresponds exactly to the conformal rididity of the octahedral graph.
    The function is monotonically increasing on $(0, 1/6)$ and thus, once $n \geq 7$, we have
    $$ \min_{1 \leq j \leq n/2} f\left(\frac{j}{n}\right) = f(1/n)$$
    which is attained only when $j=1$ and the remaining case arises, from symmetry, when $j = n-1$.
    This proves the result for $\varepsilon =0$, the existence of an entire range $(0,\varepsilon_0)$ follows from continuity.
\end{proof}
    \begin{center}
        \begin{figure}[h!]
            \centering
            \begin{tikzpicture}
             \node at (0,0) {   \includegraphics[width=0.2\textwidth]{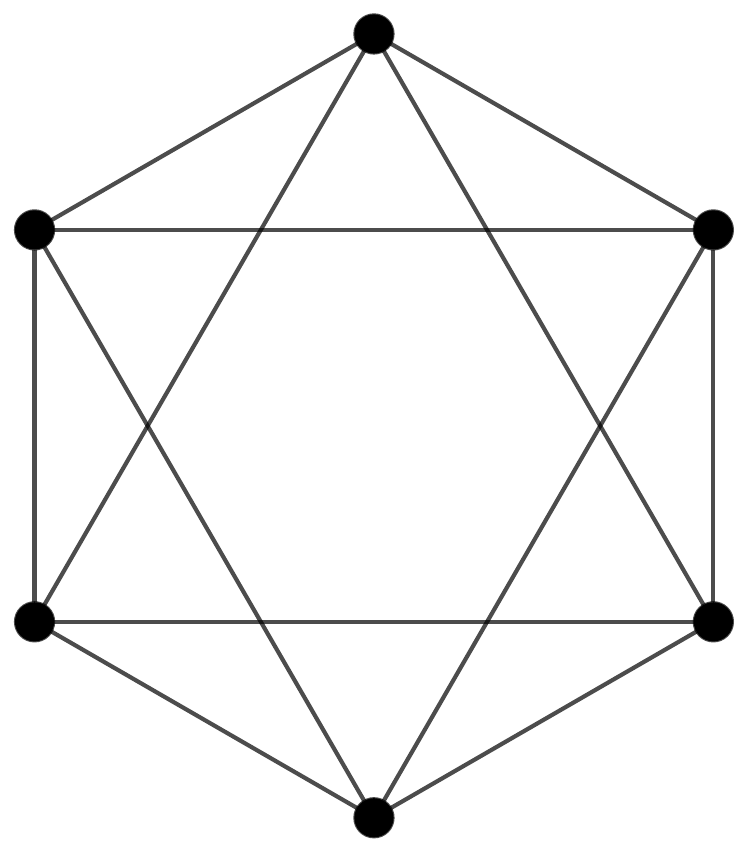}};  
              \node at (5,0) {   \includegraphics[width=0.4\textwidth]{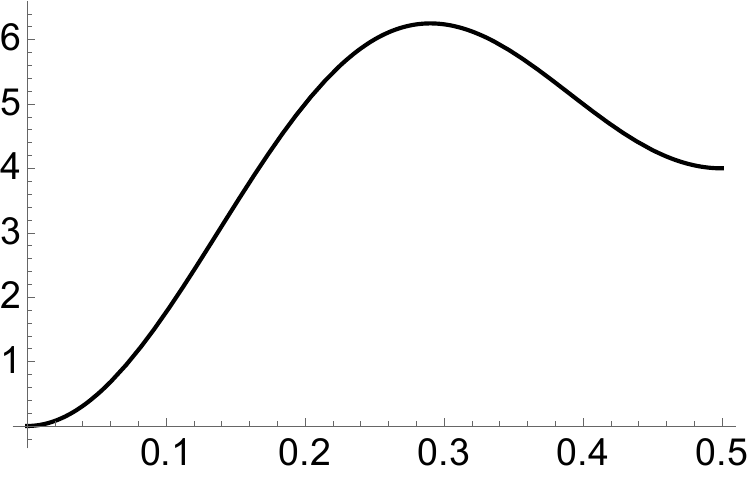}};  
              \draw [dashed]   (4.2,0.46) -- (7.3,0.46);
            \end{tikzpicture}
            \caption{The octahedral graph $C_6\left(\left\{1,2\right\}\right)$ and the function $f(x)$ for $0 \leq x \leq 1/2$ with level set $f(1/6) = f(1/2) =4$.}
            \label{fig:cosine}
        \end{figure}
    \end{center}
Having proved this Lemma, the argument now follows relatively easily. We have once $n \geq 7$, for an entire range $0 \leq \varepsilon \leq \varepsilon_0$ that
$$ \lambda_2(\varepsilon)= 4 - (2-\varepsilon)\cos\left(\frac{2\pi }{n} \right) - (2+\varepsilon)\cos\left(\frac{4\pi }{n} \right).$$
This expression is increasing and thus there is a way of changing the weights to increase $\lambda_2$ which shows that $C_n\left(\left\{1,2\right\}\right)$ is not conformally rigid for $n \geq 7$.

\section{Certifying Conformal Rigidity via SDPs} \label{sec:sdp certificates}
In this section we certify the conformal rigidity of several graphs mentioned in \S~\ref{subsec:isolated} via semidefinite programming (see \S~\ref{sec:SDP}). The solutions returned by an SDP solver are typically numerical and only approximately true. Often with some massaging they can be turned into verifiable proofs of conformal rigidity. We explain our methods, which rely on the theory explained in \S~\ref{sec:SDP}.

\subsection{The Rationalization Trick.} 
Solving the SDP \eqref{eq:prob l2 sdp} numerically on a computer, we obtain an approximate (primal) solution, an approximate (dual) solution, and a duality gap. In practice, we considered a duality gap of $< 10^{-7}$ as reasonable numerical evidence that the graph under consideration is indeed conformally rigid. However, this is not a proof. 
If the graph is conformally rigid, then the constant assignment of weights
$$ w_{ij} = \frac{1}{\lambda_2(G)} \qquad \mbox{and} \qquad w_{ij} = \frac{1}{\lambda_n(G)} $$
is an optimal (primal) solution for the SDPs \eqref{eq:prob l2 sdp} and \eqref{eq:prob ln sdp} respectively. Their optimality can be certified 
by optimal solutions $X$ and $Y$ to the dual SDPs \eqref{eq:prob l2 sdp dual} and 
\eqref{eq:prob ln sdp dual}, via strong duality. We first obtain  approximate numerical dual optimal solutions $X$ and $Y$ using the SDP solver MOSEK \cite{mosek} which we called through Mathematica \cite{mathematica}. If there is an exact solution comprised of rational numbers, then approximating the numerical entries by nearby rational numbers can lead to a closed form expression for true optimal solutions. We found this method to be remarkably effective in cases where $\lambda_2(\ones)$ and $\lambda_n(\ones)$ were both rational numbers.\\

We first illustrate the procedure for the Hoffman graph (see Fig.~\ref{fig:cert hoffman}). The solver produced a dual solution $X$,  a matrix with entries like -0.434896 and 0.565104. A little bit of an inspection shows that only five different numbers truly appear and that the numbers are exceedingly close to
$$\mbox{the rational numbers} \qquad \left\{ -\frac{359}{384}, -\frac{167}{384}, \frac{25}{384}, \frac{217}{384}, \frac{409}{384} \right\}.$$
Replacing the numerical entries in the matrix by these entries (see Fig.~\ref{fig:cert hoffman}), we can quickly check (symbolically) that this is indeed a dual optimal $X$. The same procedure can be repeated for $\lambda_n$ where the numerical dual solution only consists of the numbers $-0.25$ and $0.25$ arranged in a block structure (see Fig.~\ref{fig:cert hoffman}). Again, once a candidate has been identified, verifying that it is an optimal $Y$ is easy.

\begin{center}
    \begin{figure}[h!]
    \begin{tikzpicture}
            \node at (-4,0) {\includegraphics[width=0.3\textwidth]{hoffman.pdf}};
        \node at (0,0) {\includegraphics[width=0.3\textwidth]{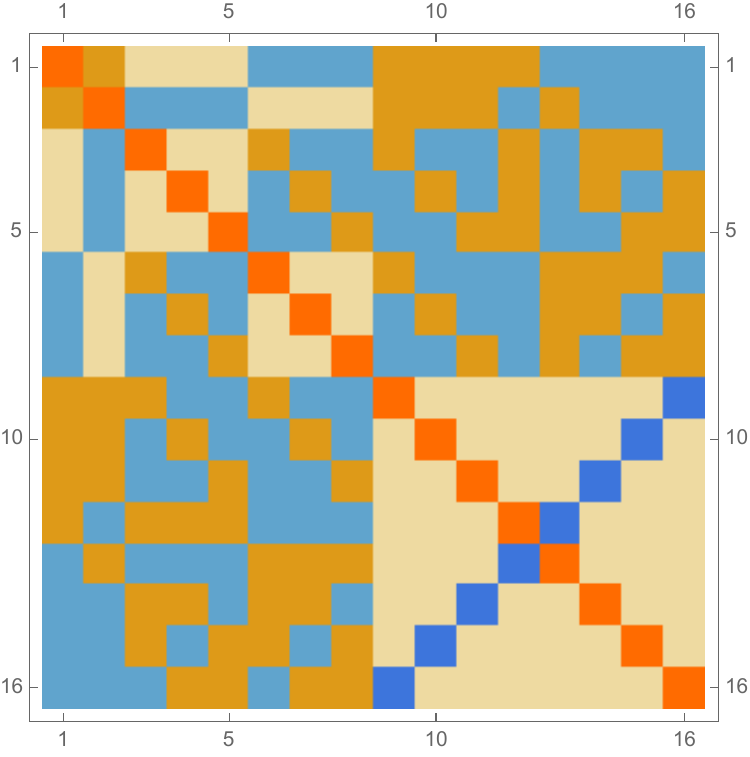}};
        \node at (4,0) {\includegraphics[width=0.3 \textwidth]{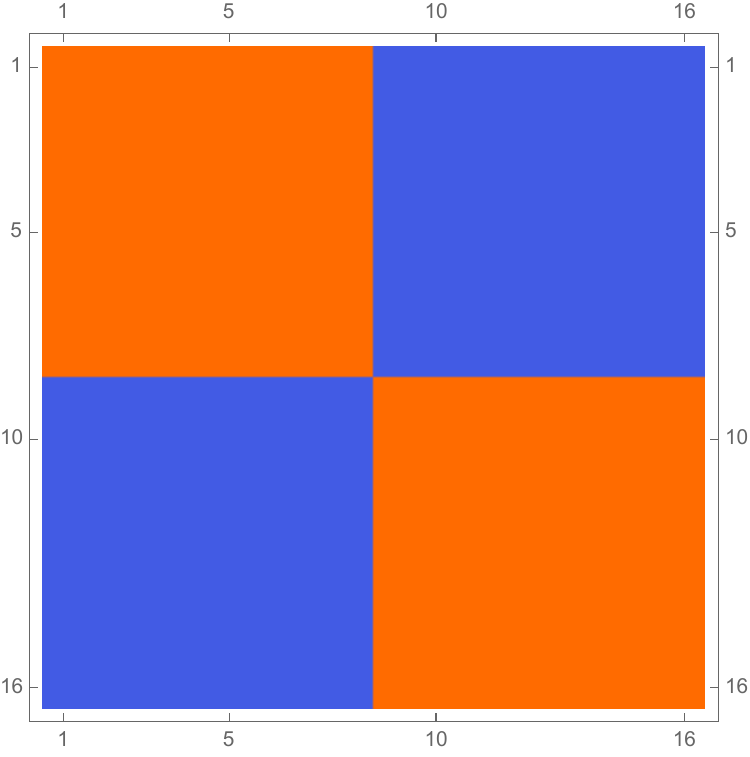}};
    \end{tikzpicture}
    \caption{Left: the Hoffman graph. Middle: the dual optimal solution $X$ for $\lambda_2$. Right: the dual optimal solution $Y$ for $\lambda_n$.}
    \label{fig:cert hoffman}
    \end{figure}
\end{center}

It should be noted that the two aspects that may cause complications are as follows: it is not a priori clear whether there is a dual optimum that is only comprised of rational numbers and, even if there is, it is not a priori clear how large the denominators are (equivalent to asking what type of accuracy one requires of the numerical approximation). On the other hand, once a candidate for an optimal solution has been identified, it is easy to prove that it is indeed one. 



\begin{center}
    \begin{figure}[h!]
    \begin{tikzpicture}
            \node at (-4.1,0) {\includegraphics[width=0.3\textwidth]{noncrossing.pdf}};
        \node at (0,0) {\includegraphics[width=0.3\textwidth]{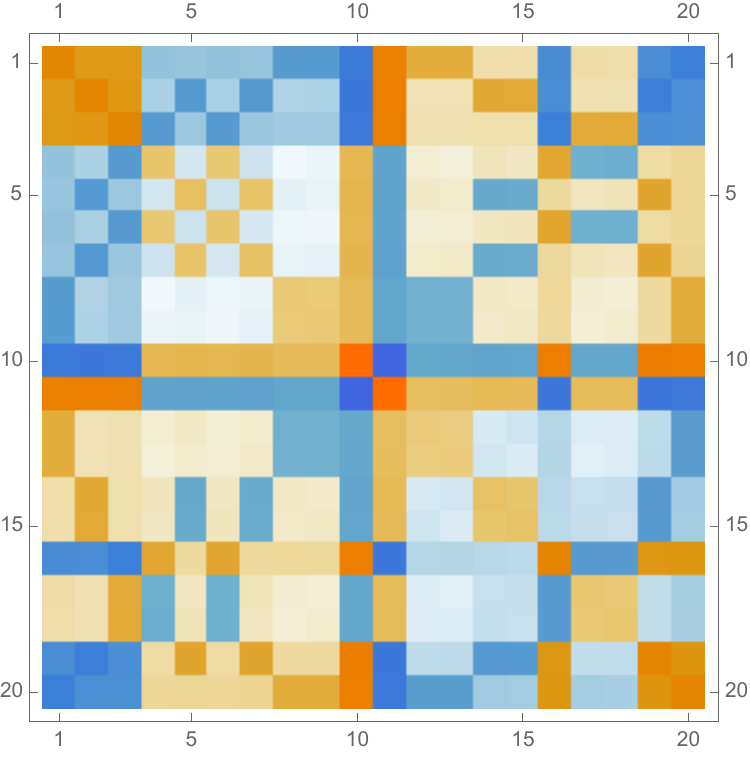}};
        \node at (4.1,0) {\includegraphics[width=0.3 \textwidth]{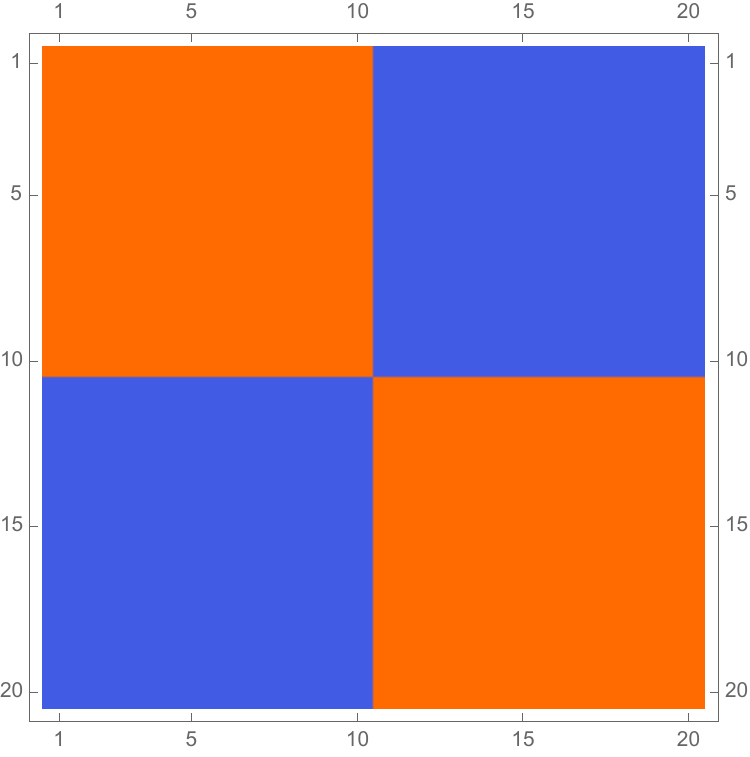}};
    \end{tikzpicture}
    \caption{Left: the CrossingNumberGraph6B. Middle: a numerical certificate for $\lambda_2$. Right: the certificate for $\lambda_n$.}
    \label{fig:cert3}
    \end{figure}
\end{center}

Another example is, CrossingNumberGraph6B, where the rationalization method by itself is not successful (see Fig.~\ref{fig:cert3}). The Laplacian 
has eigenvalues $\lambda_2 =1$ (with multiplicity 3) and $\lambda_n = 6$ (with multiplicity 1). While the rationalization method works for $\lambda_n$ (where the solution is particularly easy), it does not work for $\lambda_2$. In the next section we will use a more sophisticated method   to obtain an optimal solution $X$. It was mentioned in \cite{sun-boyd-xiao-diaconis} that if $X$ is feasible for the dual SDP \eqref{eq:prob l2 sdp dual}, then so is 
$(I-(1/n) J) X (I - (1/n)J)$. Same applies to $Y$. 
For many graphs, applying this transformation to the dual solutions  returned by the SDP solver was enough to make them nice and verifiable.

 \subsection{The Projection Method.} There is an alternate method that makes use of equations \eqref{eq:cs 2} and \eqref{eq:cs 4};  the rows (and columns) of the dual optima $X, Y$ are linear combinations of eigenvectors of the Laplacian matrix corresponding to $\lambda_2$ and $\lambda_n$, respectively. This suggests a natural idea.
 \begin{enumerate}
     \item Compute a numerical approximation for the dual optimum. 
     \item Project each row into the appropriate eigenspace of the Laplacian matrix.
     \item Convert the coefficients into reasonable nearby numbers (either rational numbers or algebraic numbers of low degree, like $\sqrt{2}$).
     \item Use this to recover an actual solution. 
 \end{enumerate}
We illustrate on the solution $X$ (for $\lambda_2$) for CrossingNumberGraph6B. The first row of a numerical candidate starts as
$$ (2.50568, 1.67231, 1.67231, 0.00569952, \dots )$$
and one might be tempted to think of these numbers as $5/2, 5/3, 5/3, 0, \dots$ but this does not work. However, there exists a particularly nice basis for the smallest eigenspace of $\lambda_2$ given by the vectors
\begin{align*}
    u_1 &=(0, 0, 4, -3, 1, -3, 1, 1, 1, -2, 2, -1, -1, -1, -1, -4, 3, 3, 0, 0) \\
  u_2 &= (0, -3, -1, 0, 2, 0, 2, -1, -1, 2, -2, 1, 1, -2, -2, 1, 0, 0, 3, 0)\\
  u_3 &=(-3, 0, -1, 0, -1, 0, -1, 2, 2, 2, -2, -2, -2, 1, 1, 1, 0, 0, 0, 3).
\end{align*}
Note that these vectors are not orthogonal but they are in closed form. Computing the inner product of the first row of the numerical $X$ with these three vectors, we get the numbers $(19.9994, -19.9994, -34.9999)$ and it's pretty clear what these are supposed to be. Correcting them and solving the linear system, we obtain a corrected matrix whose first row starts with
$$ \frac{7}{3}, \frac{3}{2}, \frac{3}{2}, -\frac{1}{6}, -\frac{1}{6}, \dots$$
which isn't even particularly close to the original numerical values. This then turns out to indeed be a correct dual optimal solution $X$.

This method also worked for the $(20,8)-$accordion graph (see Fig. \ref{fig:out}); the solution $X$ for $\lambda_n$ is trivial, the solution $Y$ for $\lambda_2$ is not. The numerical solution returned by the solver is not very insightful, however, projecting into the eigenspace corresponding to $\lambda_2$ is very helpful because it only has multiplicity 2: the new coordinates happened to either be 0 or, in case they were both nonzero, happen to have a ratio of $(1+\sqrt{5})/2$ which helped identify the correct solution.

If the multiplicity of either $\lambda_2$ or $\lambda_n$ is one, then there is a particularly easy dual optimum constructed from an eigenvector of that eigenvalue which is a special case of the projection method.
For the Haar Graph 565 shown in Fig.~\ref{fig:Haar} with $20$ vertices, we used the projection method to find $X$; this graph has 
$\lambda_2 = 2.76393$ with multiplicity $4$. 
However, $\lambda_{20} = 10$ with multiplicity $1$, and so we can use (\ref{obs:rank one certificate}) to find an optimal $Y \in \mathcal{S}^{20}_+$ for \eqref{eq:prob ln sdp} which has optimal value $q^*=5$ (see the next subsection). 
An eigenvector of $\lambda_{20}$ is 
$$u=(-1, -1, -1, -1, -1, -1, -1, -1, -1, -1, 1, 1, 1, 1, 1, 1, 1, 1, 1, 1)$$
which has norm $\sqrt{20}$.
Therefore $Y = 5(1/20) uu^\top$ is a dual optimal solution for \eqref{eq:prob ln sdp} with trace equal to 
$5= q^*$.

\subsection{Symmetry Reduction} \label{subsec:symmetry}
Conformal rigidity of the Haar Graph 565 is especially easy via symmetry reduction. Haar Graph 565 has $50$ edges which fall into $2$ orbits of size $40$ and $10$ under the automorphism group of the graph. This Haar graph and its edge orbit graphs are shown in Fig.~\ref{fig:Haar}.

\begin{figure}[h!]
\begin{tikzpicture}
\node at (0,0) {\includegraphics[width=0.29\textwidth]{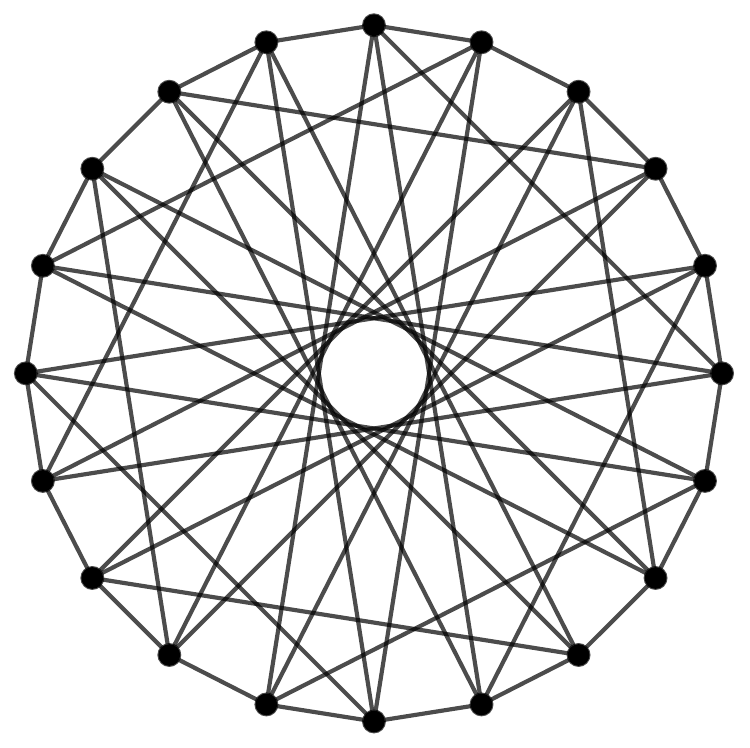}};
 \node at (4.5,0) {\includegraphics[width=0.29\textwidth]{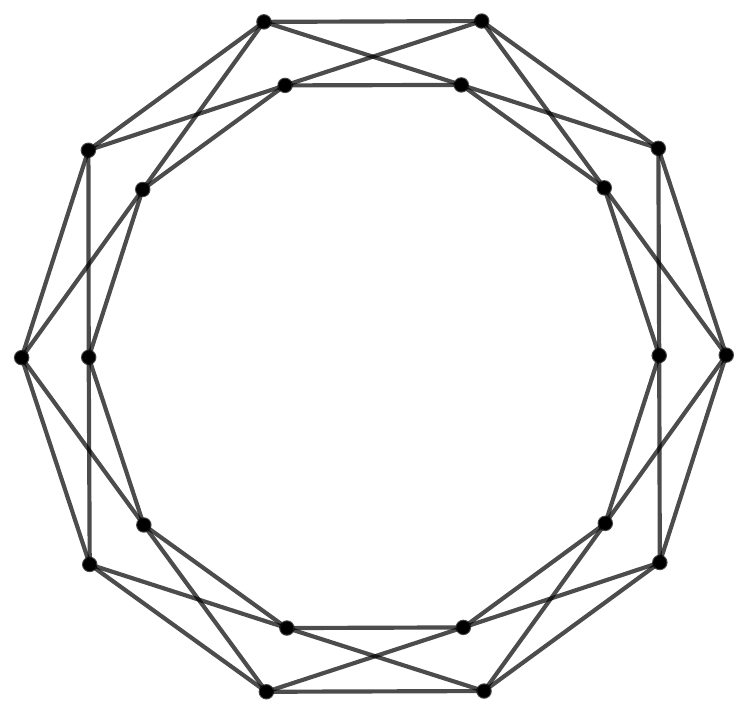}};
\node at (9,0) {\includegraphics[width=0.29\textwidth]{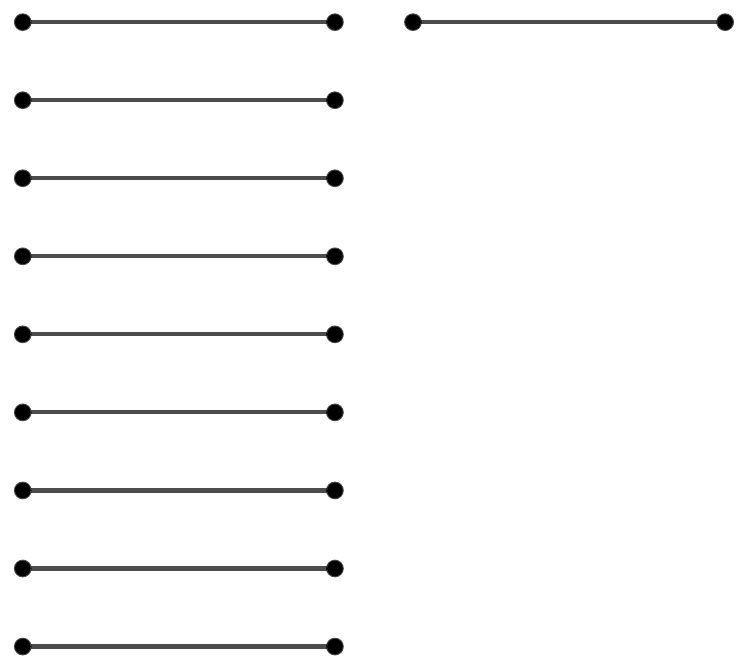}};
\end{tikzpicture}
\caption{Haar 565 (left) and its decomposition (middle and right) under the automorphism group of the graph.}
\label{fig:Haar}
\end{figure}

The symmetry reduced SDP \eqref{eq:prob l2 sdp symm} has $2$ variables and the constraint 
$$w_1 L_1 + w_2 L_2 \succeq I - \frac{1}{20} J.$$
The eigenvalues of $w_1 L_1 + w_2 L_2 - I + \frac{1}{20} J$ are linear in $w_1,w_2$ as shown below and can be computed explicitly in Mathematica: 
\begin{align} \label{eq:eigenvals Haar}
\begin{split}
0,\, -1 + 4 w_1, \,-1 + (5 \pm \sqrt{5}) w_1, \, 
-1 + (3 \pm \sqrt{5}) w_1 + 2 w_2, \\ 
 -1 + 4 w_1 +  2 w_2, -1 + 8 w_1+ 2 w_2
 \end{split}
 \end{align}
 Therefore, problem \eqref{eq:prob l2 sdp} is to minimize $40w_1 + 10w_2$ subject to all eigenvalues in \eqref{eq:eigenvals Haar} being nonnegative and $w_1,w_2 \geq 0$. This is a $2$-variable linear program with an optimal solution 
 $$ w_1 = w_2 = \frac{1}{\lambda_2(\ones)} \sim 0.361803.$$
 Therefore, $\ones/\lambda_2(\ones)$ is an optimal solution of \eqref{eq:prob l2 sdp}. To solve \eqref{eq:prob ln sdp}, we maximize $40w_1 + 10w_2$ subject to all the 
 eigenvalues in \eqref{eq:eigenvals Haar} being nonpositive and $w_1, w_2 \geq 0$. This is again a $2$-variable linear program with an optimal solution 
 $w_1 = w_2 = \frac{1}{10}$ and optimal value $q^* = 5$. Therefore, $\ones/\lambda_{20}(\ones)$ is an optimal solution of \eqref{eq:prob ln sdp} and Haar Graph 565 is conformally rigid. The solver we used returned the optimal solution $w_1 = 0, w_2 = 1/2$ which is not useful to 
 prove conformal rigidity but is useful to verify the optimality of $w_1 = w_2 = 1/10$. It also provides an example of a graph where a multiple of $\ones$ is not the only weight that minimizes $\lambda_n(w)$. 

A more involved example is the distance-$2$ graph of the Klein graph (see Fig. \ref{fig:out}) which has $n=24$ vertices and $168$ edges that come in $2$ orbits each of size $84$. In this case, 
$\lambda_2(\ones)=11.3542$ and 
$\lambda_n(\ones) = 16.6458$, each with multiplicity $8$.
The eigenvalues of $w_1 L_1 + w_2 L_2 - I + \frac{1}{24}J$ are:
\begin{align}
    \begin{split}
        0, \,\, -1 + 8w_1 + 8w_2, \,\, 
        -1 + 7w_1 + 7w_2 \pm  \sqrt{7(w_1^2 - w_1 w_2  + w_2^2)}\\
    \end{split}
\end{align}
Therefore, Problem~\eqref{eq:prob l2 sdp} is to 
minimize $84w_1 + 84w_2$ subject to the nonnegativity of the eigenvalues and $w_1,w_2 \geq 0$. This is a $2$-dimensional convex optimization problem with optimal solution $w_1 = w_2 =1/\lambda_2(\ones) \sim 0.0880728$. The feasible region and the optimal position of the objective function are shown in Fig.~\ref{fig:KleinDistance2}.
\begin{figure}[h!]
\begin{tikzpicture}
    \node at (0,0) {\includegraphics[scale=0.4]{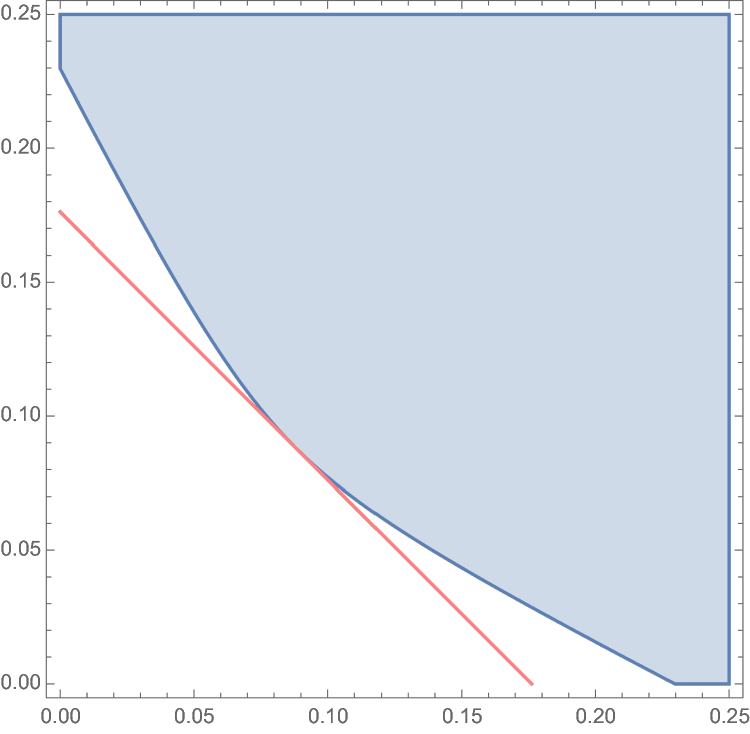}};
    \node at (6,0) {\includegraphics[scale=0.4]{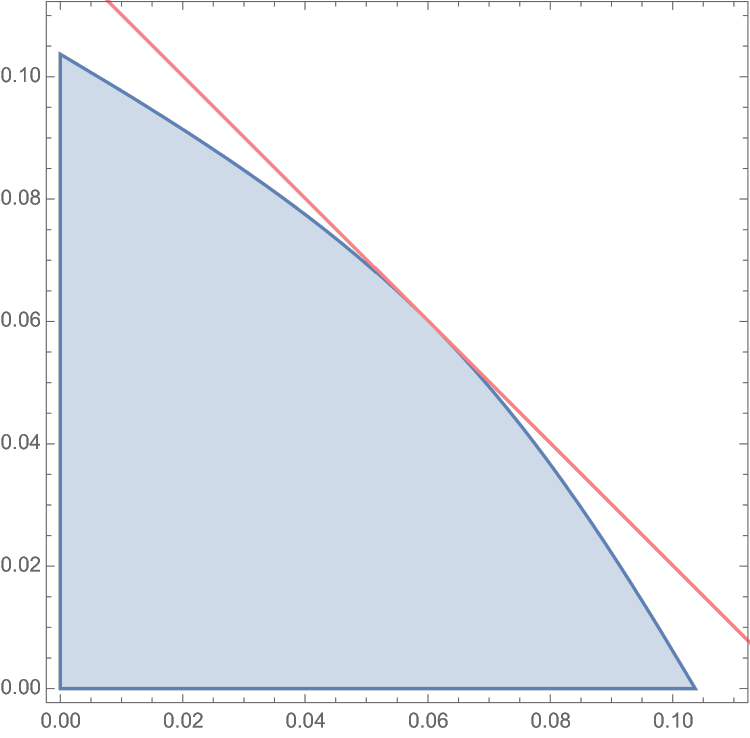}};
\end{tikzpicture}
    \caption{Optimizing $\lambda_2(w)$ (left) and $\lambda_n(w)$ (right) on the distance-$2$ graph of the Klein graph, after symmetry reduction.}
     \label{fig:KleinDistance2}
\end{figure}
Similarly, Problem~\eqref{eq:prob ln sdp} is the  $2$-dimensional convex optimization problem of maximizing $84 w_1 + 84 w_2$ subject to the nonpositivity of the eigenvalues and $w_1, w_2 \geq 0$. It has 
optimal solution $w_1 = w_2 = 1/\lambda_n(\ones) \sim 0.0600754$. The feasible region and the optimal position of the objective function are shown in Fig.~\ref{fig:KleinDistance2}.

\subsection{Complementary Slackness} Proposition~\ref{prop:certificates} provides certificates $X$ and $Y$ for the conformal rigidity of a graph. This approach can be 
especially effective. The conditions $L X = \lambda_2 X$ and $L Y = \lambda_n Y$ allows us to parametrize $X$ and $Y$ and reduce the number of variables. We explain 
the parametrization of $X$; the parametrization of $Y$ is similar.

\begin{lemma} \label{lem:cs parametrization} 
Set $U := [u_1 \ldots u_{k}] \in \RR^{n \times k}$ where $\{u_1, \ldots, u_k\}$ is a set of $k$ vectors in $\mathbb{R}^n$ that form a basis for the eigenspace $\mathcal{E}_{\lambda_2}$ of $L$. Then  $X \in \mathcal{S}^n_+$ and $LX = \lambda_2 X$ if and only if $S \in \mathcal{S}^k_+$ and $X = USU^\top$.
\end{lemma}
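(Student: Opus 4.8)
The plan is to prove both implications by elementary linear algebra, using only that the columns of $U$ form a basis of $\mathcal{E}_{\lambda_2}$, so that $LU = \lambda_2 U$ and $U$ has full column rank.

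First I would dispatch the easy direction. If $X = USU^\top$ with $S \in \mathcal{S}^k_+$, then $X$ is symmetric and $z^\top X z = (U^\top z)^\top S (U^\top z) \ge 0$ for every $z \in \RR^n$, so $X \in \mathcal{S}^n_+$; and since every column of $U$ lies in $\mathcal{E}_{\lambda_2}$ we have $LU = \lambda_2 U$, whence $LX = (LU)SU^\top = \lambda_2 USU^\top = \lambda_2 X$.

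For the converse, assume $X \in \mathcal{S}^n_+$ and $LX = \lambda_2 X$. The first observation is that each column of $X$ is then an eigenvector of $L$ for $\lambda_2$ (or the zero vector), so $\col(X) \subseteq \mathcal{E}_{\lambda_2} = \col(U)$. Next I would use that $X$ is psd to write a Gram decomposition $X = MM^\top$ (for instance $M = X^{1/2}$); then $\col(M) = \col(X) \subseteq \col(U)$, so $M = UC$ for some $C \in \RR^{k \times n}$, and setting $S := CC^\top \in \mathcal{S}^k_+$ gives $X = MM^\top = U(CC^\top)U^\top = USU^\top$. If a closed form is wanted, one can instead take $S := (U^\top U)^{-1}U^\top X U (U^\top U)^{-1}$, which is well defined because $U^\top U$ is invertible, is symmetric and psd because $X$ is, and satisfies $USU^\top = PXP = X$, where $P := U(U^\top U)^{-1}U^\top$ is the orthogonal projection onto $\mathcal{E}_{\lambda_2}$ and $PX = XP = X$ follows from $\col(X) \subseteq \col(U)$ together with the symmetry of $X$.

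I do not expect a substantial obstacle here; the only point requiring care is the implication ``$X$ symmetric psd with $\col(X) \subseteq \col(U)$ implies $X = USU^\top$ for some $S \succeq 0$''. Symmetry of $X$ alone already forces $X = UTU^\top$ for some symmetric $T$, and it is the positive semidefiniteness of $X$, exploited via the Gram factorization, that upgrades $T$ into $\mathcal{S}^k_+$. The identical argument with $\mathcal{E}_{\lambda_n}$ in place of $\mathcal{E}_{\lambda_2}$ gives the analogous parametrization of $Y$.
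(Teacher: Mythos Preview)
Your proof is correct and follows essentially the same approach as the paper: factor the psd matrix $X$ as $MM^\top$, use $LX=\lambda_2 X$ to place $\col(M)$ inside $\col(U)$, and read off $S$. The paper uses the reduced spectral decomposition $X=ADA^\top$ in place of your $X=MM^\top$, which is a cosmetic difference; your additional closed-form $S=(U^\top U)^{-1}U^\top X U(U^\top U)^{-1}$ is a nice extra that the paper does not give.
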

\begin{proof}
    If $X = USU^\top$ and $S \succeq 0$, then $X \succeq 0$ since for all $v \in \RR^n$
    $$v^\top X v = v^\top U S U^\top v = (U^\top v)^\top S (U^\top v) \geq 0.$$
    The columns of $X$ are combinations of the columns of $U$ and so, $LX = \lambda_2 X$. 
    
    Now suppose $X \succeq 0$ and $LX = \lambda_2 X$. Then $\rank(X) \leq k$ where $k = \textup{mult}(\lambda_2)$. Since $X \succeq 0$, $X$ has a reduced singular value decomposition of the form  $X = A D A^\top$ where the columns of $A$ are orthonormal and $D \in \RR^{k \times k}$ is a diagonal matrix with nonnegative diagonal entries. The column space of $X$ is the column space of $A$ which means that $A = UB$ for some matrix $B \in \RR^{k \times k}$. Therefore, 
    $$ X = UBD(UB)^\top = U (BDB^\top) U^\top 
 = U S U^\top.$$
 The matrix $S = BDB^\top$ is psd since 
 $D$ is psd. 
\end{proof}
Note that in Lemma~\ref{lem:cs parametrization} we do not require that the columns of $U$ 
form an orthonormal basis of $\mathcal{E}_{\lambda_2}$, just that it is a basis. 
Using Lemma~\ref{lem:cs parametrization}, we can rephrase Proposition~\ref{prop:certificates} as follows.

\begin{corollary} \label{cor:reduced certificates}
Let $\{u_1, \ldots, u_k\}$ be a basis of the eigenspace of $L$ with eigenvalue $\lambda_2$ and $U = [ u_1 \cdots u_k] \in \RR^{n \times k}$. Let 
$\{v_1, \ldots, v_\ell\}$ be a basis of the eigenspace of $L$ with eigenvalue $\lambda_n$ and $V = [ v_1 \cdots v_\ell] \in \RR^{n \times \ell}$. Then $G$ is conformally rigid if and only if there exists 
\begin{enumerate}
    \item a matrix $X = USU^\top$ such that 
    \begin{align} \label{eq:reduced X cert}
    S \in \mathcal{S}^k_+, \,\, \,\,X_{ii} + X_{jj} - 2X_{ij} = 1 \,\,\forall ij \in E, \end{align}
\item a matrix $Y = VTV^\top$ such that 
    \begin{align} \label{eq:reduced Y cert}
    T \in \mathcal{S}^\ell_+, \,\, \,\,Y_{ii} + Y_{jj} - 2Y_{ij} = 1 \,\,\forall ij \in E. 
    \end{align}
\end{enumerate}
\end{corollary}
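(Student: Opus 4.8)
The plan is to obtain this corollary by mechanically combining Proposition~\ref{prop:certificates} with Lemma~\ref{lem:cs parametrization}: no new idea is required, the work is purely a translation of the certificate conditions into a parametrized form. By Proposition~\ref{prop:certificates}, $G$ is conformally rigid if and only if there exist $X$ feasible for \eqref{eq:prob l2 sdp dual} and $Y$ feasible for \eqref{eq:prob ln sdp dual} satisfying the complementary slackness equalities \eqref{eq:cs x} and \eqref{eq:cs y}. The first step I would take is to observe that, for such an $X$, the conditions that actually matter are $X \succeq 0$, $LX = \lambda_2 X$, and $X_{ii} + X_{jj} - 2X_{ij} = 1$ for all $ij \in E$. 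The remaining requirement $\ones^\top X \ones = 0$ from \eqref{eq:prob l2 sdp dual} is not an extra hypothesis but a consequence of $LX = \lambda_2 X$: every column of $X$ then lies in $\mathcal{E}_{\lambda_2}$ and is therefore orthogonal to $\ones$ since $\lambda_2 > 0$. Likewise, the edge inequalities $X_{ii} + X_{jj} - 2X_{ij} \le 1$ are subsumed by the equalities in \eqref{eq:cs x}. The identical reduction applies to $Y$ with $\lambda_n$ in place of $\lambda_2$.

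Next I would invoke Lemma~\ref{lem:cs parametrization}: taking $U = [u_1 \cdots u_k]$ to be a basis matrix of $\mathcal{E}_{\lambda_2}$, the pair of conditions ``$X \in \mathcal{S}^n_+$ and $LX = \lambda_2 X$'' is equivalent to ``$X = USU^\top$ for some $S \in \mathcal{S}^k_+$''. Substituting this parametrization into the surviving constraint $X_{ii}+X_{jj}-2X_{ij}=1$ yields precisely \eqref{eq:reduced X cert}. For the $\lambda_n$ part, the proof of Lemma~\ref{lem:cs parametrization} goes through verbatim with $\lambda_n$, $\ell = \textup{mult}(\lambda_n)$, and $V = [v_1 \cdots v_\ell]$ in the roles of $\lambda_2$, $k$, and $U$; this gives that ``$Y \in \mathcal{S}^n_+$ and $LY = \lambda_n Y$'' is equivalent to ``$Y = VTV^\top$ for some $T \in \mathcal{S}^\ell_+$'', and substitution produces \eqref{eq:reduced Y cert}. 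Chaining these two equivalences with the reduced form of Proposition~\ref{prop:certificates} established in the previous paragraph gives the claimed biconditional. For the converse direction (already implicit in the ``if and only if'' of the lemmas) I would simply verify that any $X = USU^\top$ with $S \succeq 0$ satisfying the edge equalities is genuinely feasible for \eqref{eq:prob l2 sdp dual} --- it is psd, satisfies $\ones^\top X \ones = 0$ automatically, and meets the edge constraints with equality --- and satisfies \eqref{eq:cs x}, and similarly for $Y$, so that Proposition~\ref{prop:certificates} applies.

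I do not expect any genuine obstacle: the proof is essentially substitution once the two preceding results are in hand. The only points deserving care are (i) remembering that $\ones^\top X \ones = 0$ is a consequence of the eigenvector condition rather than an additional constraint, so that nothing is lost when passing to the parametrization $X = USU^\top$, and (ii) stating explicitly that Lemma~\ref{lem:cs parametrization}, although phrased for $\lambda_2$, holds \emph{mutatis mutandis} for $\lambda_n$ with $\ell$ in place of $k$.
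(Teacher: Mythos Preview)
Your proposal is correct and follows essentially the same approach as the paper: combine Proposition~\ref{prop:certificates} with Lemma~\ref{lem:cs parametrization}, observing that $\ones^\top X \ones = 0$ (and likewise for $Y$) is automatic from the parametrization $X = USU^\top$ since the columns of $U$ are orthogonal to $\ones$. The paper's own proof is more terse but makes exactly these points, including the implicit use of Lemma~\ref{lem:cs parametrization} for $\lambda_n$ as well as $\lambda_2$.
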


\begin{proof}
    By construction, $\ones^\top X \ones = \ones^\top U S U^\top \ones = 0$ since the columns of $U$ are orthogonal to 
    $\ones$. Similarly for $Y$. All other constraints in Proposition~\ref{prop:certificates} are satisfied by $X$ (and $Y$) under the given parametrization.
\end{proof}

\begin{example} \label{ex:complement Shrikhande}
    We can use the above method to prove the conformal rigidity of the Hoffman graph 
(see Fig.~\ref{fig:conformally rigid}) which has $16$ vertices and $32$ edges. Its Laplacian eigenvalues with multiplicities 
are:
$$ (\lambda, \,\textup{mult}(\lambda)): \,\,(8, 1), (6, 4), (4, 6), (2, 4), (0, 1).$$
The rows of the following matrix span $\mathcal{E}_2$. Hence, $U^\top$ is 
$$\small{\left(
\begin{array}{cccccccccccccccc}
 -1 & 1 & -1 & -1 & -1 & 1 & 1 & 1 & 0 & 0 & 0 & -2 & 2 & 0 & 0 & 0 \\
 -1 & -1 & 1 & 1 & -1 & 1 & 1 & -1 & 0 & 0 & -2 & 0 & 0 & 2 & 0 & 0 \\
 -1 & -1 & 1 & -1 & 1 & 1 & -1 & 1 & 0 & -2 & 0 & 0 & 0 & 0 & 2 & 0 \\
 -1 & -1 & -1 & 1 & 1 & -1 & 1 & 1 & -2 & 0 & 0 & 0 & 0 & 0 & 0 & 2 \\
\end{array}
\right)}$$
Setting $S$ to be a symbolic symmetric matrix of size $4 \times 4$, and $X = USU^\top$ 
we can solve the linear equations $X_{ii} + X_{jj}-2X_{ij}=1$ for all $ij \in E$. Plugging the solution back into $S$ we get the matrix: 
$$
S = 
\left(
\begin{array}{cccc}
 s_{11} & 0 & 0 & 0 \\
 0 & s_{22} & 0 & 0 \\
 0 & 0 & s_{33} & 0 \\
 0 & 0 & 0 & 1-s_{11}-s_{22}-s_{33} \\
\end{array}
\right)
$$
which is psd if and only if all its diagonal entries are nonnegative. Therefore, there are infinitely many choices for the certificate $X$. Picking  $s_{11}=s_{22}=s_{33}=1/4$, we get the certificate $X$ in Fig.~\ref{fig:cs Hoffman}.
The eigenspace of $\mathcal{E}_8$ is spanned by 
$$(-1, -1, -1, -1, -1, -1, -1, -1, 1, 1, 1, 1, 1, 1, 1, 1),$$ and running the 
same procedure, we get the unique certificate $Y$ in Fig.~\ref{fig:cs Hoffman}.
    \begin{center}
    \begin{figure}[h!]
    \begin{tikzpicture}
            \node at (0,0) {\includegraphics[width=0.3\textwidth]{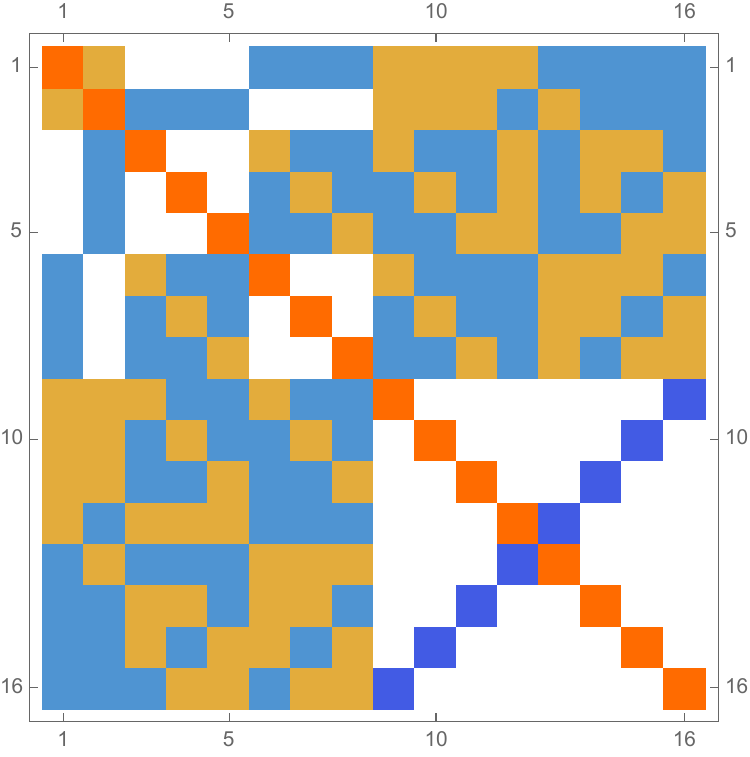}};
          \node at (4,0) {\includegraphics[width=0.3 \textwidth]{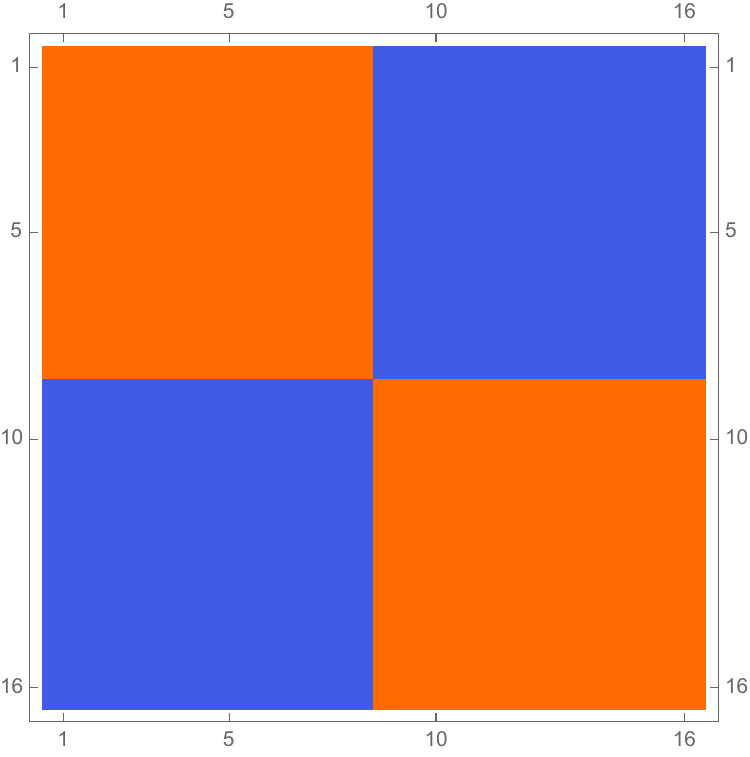}};
    \end{tikzpicture}
    \caption{Certificates for the conformal rigidity of the Hoffman graph. Left: a certificate $X$ for $\lambda_2$, Right: the certificate $Y$ for $\lambda_n$.}
    \label{fig:cs Hoffman}
    \end{figure}
\end{center}
\end{example}

    We use Corollary~\ref{cor:G and Gc cr} and the above parametrizations to show that while $C_6$ is conformally rigid, 
its complement is not. 

\begin{example} \label{ex:C6 and its complement}
    The cycle $C_6$ is conformally rigid since it is edge-transitive. Let 
    $G = C_6^c$ which is not edge-transitive and has $9$ edges. Its Laplacian has eigenvalues 
    $0,2,3,3,5,5$. 
    The eigenspace $\mathcal{E}^c_5$ is spanned by the orthonormal vectors 
    \begin{align*} 
    u = (-1/(2 \sqrt{3}), -1/ \sqrt{3},
    -1/(2 \sqrt{3}), 1/(2 \sqrt{3}),  1/\sqrt{3},  1/(2 \sqrt{3}),\\
    v = (1/2,0, -1/2, -1/2, 0,1/2).
    \end{align*}
  
  \noindent Setting  
  $$ 
  T = \begin{pmatrix} a & b\\ b & c \end{pmatrix},$$
consider a potential certificate $Y'=[u \, v] \,T \,[u,v]^\top$. 
If $C_6^c$ were conformally rigid, we would have  
$\textup{Trace } Y' = \frac{9}{5} = \textup{Trace }(T) = a+c$. Therefore, 
we can set $c = 9/5-a$ in $Y'$. Then imposing the linear conditions, we get from 
the edge $(1,3)$ of $C_6^c$ that $a+\sqrt{3}b = 7/10$ and from the edge $(2,5)$ of $C_6^c$ that 
$a+\sqrt{3}b=3/5$ which is a contradiction. So no such $Y'$ exists and $C_6^c$ is not conformally rigid.\qed
\end{example}

\subsection{Using $UU^\top$} \label{sec:uut} We conclude by noting a particularly simple method that often ends up producing certificates $X$ and $Y$ (and provably does so for distance-regular graphs). For $\lambda_2$, the procedure works as follows: one collects an orthonormal basis of the eigenspace $\lambda_2$ corresponding to the Laplacian $L$ in a matrix $U$. A rescaling of the matrix $U U^\top \rightarrow c U U^{\top}$ to ensure that the trace is $|E|/\lambda_2$  often produces certificates that seem to work: in particular, running a check on the graphs built into the internal database of Mathematica shows that this approach leads to a valid certificate (for $\lambda_2$) for all conformally rigid graphs having less than 20 vertices. A 
case where this does not work is the Haar graph 565 on 20 vertices (see Fig. \ref{fig:Haar}). However, there are other examples such as the non-Cayley vertex-transitive graph (24,23) on $n=24$ vertices, where all the other approaches failed while $c U U^{\top}$ indeed produces a valid certificate for $\lambda_2$.

\textbf{Acknowledgment.} SS was supported by the NSF (DMS-2123224). RT acknowledges helpful discussions with Sameer Agarwal.

\end{document}